\newtheorem{theorem}{Theorem}[section]
\newtheorem{lemma}[theorem]{Lemma}
\newtheorem{proposition}[theorem]{Proposition}
\newtheorem{corollary}[theorem]{Corollary}
\newtheorem{claim}{Claim}
\theoremstyle{definition}
\newtheorem*{definition}{Definition}
\newtheorem*{remark}{Remark}
\numberwithin{equation}{section}
\title[Chordal Loewner chains and Teichm\"uller spaces on the half-plane]
{Chordal Loewner chains and Teichm\"uller spaces on the half-plane}
\author[H. Wei]{Huaying Wei} 
\address{Department of Mathematics and Statistics, Jiangsu Normal University \endgraf Xuzhou 221116, PR China} 
\curraddr{Department of Mathematics, School of Education, Waseda University \endgraf
Shinjuku, Tokyo 169-8050, Japan}
\email{hywei@jsnu.edu.cn} 
\author[K. Matsuzaki]{Katsuhiko Matsuzaki}
\address{Department of Mathematics, School of Education, Waseda University \endgraf
Shinjuku, Tokyo 169-8050, Japan}
\email{matsuzak@waseda.jp}
\subjclass[2020]{Primary 30C62, 30C80, 37F30; Secondary 30H35, 30C35, 30D05}
\keywords{Loewner chain, evolution family, univalent function, Schwarzian derivative, quasiconformal extension, Teichm\"uller space, Carleson measure, VMOA}
\thanks{Research supported by 
Japan Society for the Promotion of Science (KAKENHI 18H01125 and 21F20027).}
\begin{document}

\maketitle

\begin{abstract}
We consider a univalent analytic function $f$ on the half-plane
satisfying the condition that the supremum norm of its (pre-)Schwarzian derivative vanishes on
the boundary. Under certain extra assumptions on $f$, we show that there exists a chordal Loewner chain 
initiated from $f$ until some finite time, and this Loewner chain defines a
quasiconformal extension of $f$ over the boundary such that its complex dilatation is given
explicitly in terms of the (pre-)Schwarzian derivative in some neighborhood of the boundary. 
This can be regarded as the half-plane version of
the corresponding result developed on the disk by Becker and also the generalization of
the Ahlfors--Weill formula. As an application of this quasiconformal extension,
we complete the characterization of an element of the VMO-Teichm\"uller space on the half-plane
using the vanishing Carleson measure condition induced by the (pre-)Schwarzian derivative.
\end{abstract}

\section{Introduction}

Disks and half-planes in the complex plane are conformally equivalent. Most of the results in complex analysis
have no difference between these cases, and thus one may easily overlook the situations
where the difference occurs when boundary conditions are imposed on functions of these domains; one boundary is compact and the other is non-compact.
In active research areas of complex analysis interacting with each other, we can find these phenomena in recent studies.
One is a Loewner chain and the other is a Teichm\"uller space. In this paper, we focus on a Loewner chain based on 
a point on a non-compact
boundary, and consider its application to a Teichm\"uller space whose functions satisfy a certain vanishing condition on 
the non-compact boundary.

The idea of the Loewner theory, first introduced by Loewner \cite{Loe23} in 1923  
and later developed by Kufarev \cite{Kuf43} in 1943 and Pommerenke \cite{Pom65} in 1965,
is to embed a univalent function 
into a family of time-parametrized univalent functions (nowadays known as a Loewner chain) satisfying a suitable differential equation. 
Subsequently, the parametric representation method of univalent functions has been widely applied and developed in the theory of univalent functions as well as in other fields of mathematics. These include its stochastic analogue (Schramm--Loewner Evolution) discovered by Schramm \cite{Sch00} in 2000, which has attracted substantial attention in probability theory and conformal field theory. 
The most remarkable result in classical complex analysis is that the famous Bieberbach conjecture was solved by de Branges \cite{DB85} in 1985 with the use of the extension of Loewner's original approach. 

In this paper, our interest is mainly devoted to an application of the Loewner differential equation of chordal type to quasiconformal extensions of univalent functions on the right half-plane $\mathbb H := \{z = x + iy \in \mathbb C:x > 0 \}$. The chordal case was less well known until a decade ago, while the radial case 
is the main focus of the classical Loewner theory.   

Pommerenke applied successfully the radial Loewner differential equation to show the univalence criteria of analytic functions on the unit disk $\mathbb D := \{z \in \mathbb C: |z| < 1\}$  (see \cite[Theorem 6.2]{Pom75}). Particularly, it is worth mentioning that many sufficient conditions for univalence can be deduced in this way (see \cite[Theorem 5.3]{Be80}). Moreover, most of the univalence criteria can be refined to quasiconformal extension criteria. 

We first recall the following well-known result prior to the application of the Loewner theory.  
Let $f$ be analytic in a disk $D$ in the extended complex plane $\overline{\mathbb C}$, and assume that
\begin{equation}\label{Ssup1}
\sup_{z \in  D} \frac{1}{2}\rho_D^{-2}(z) |Sf(z)| \leq k. 
\end{equation}
Here, $\rho_D$ denotes the hyperbolic density on $D$ (of the Gaussian curvature $\equiv -4$),
which satisfies $\rho_{D}(\varphi(z))|\varphi'(z)| = 1/(2{\rm Re}z)$ for a conformal mapping $\varphi$ of $\mathbb H$ onto $D$;
$Sf = (Pf)' - (Pf)^2/2$ is the Schwarzian derivative of $f$ given by the pre-Schwarzian derivative
$Pf := f''/f'$. If $k = 1$ then $f$ is univalent in $D$, which is a sharp univalence criterion
proved by Nehari \cite{Ne49} in 1949; 
if $k < 1$ then $f$ admits a $k$-quasiconformal extension to $\overline{\mathbb C}$ as Ahlfors and Weill \cite{AW62} found in 1962
(see also \cite[p.87]{Le}). In fact, 
when $D=\mathbb D$, the complex dilatation $\mu(z)$ of this extension has the form 
\begin{equation}\label{Smu1}
\mu(1/\bar z) = -\frac{1}{2} \left( z/\bar z \right)^2 (1 - |z|^2)^2 Sf(z);
\end{equation}
when $D=\mathbb H$, it has the form
\begin{equation}\label{Smu2}
\mu(-\bar z) = - \frac{1}{2}(2{\rm Re}z)^2  Sf(z).
\end{equation}

These results can be also obtained by using the Loewner chain.
In 1972, Becker \cite{Be72} discovered  
the method of showing the quasiconformal extendibility of univalent functions in $\mathbb D$ by means of the radial Loewner differential equation: let 
$(f_t)_{t \geq 0}$ be a univalent solution of 
\begin{equation*}\label{PDE4}
\dot{f}_t (z) =  z p(z, t) f_t'(z)
\end{equation*}
for all $z \in \mathbb D$ and almost every $t \geq 0$ with some Herglotz function $p(z, t)$.   
He showed that if the image domain $p(\mathbb D, t)$ 
lies in a closed disk 
\begin{equation*}
U(k):=\left\{w \in \mathbb H: \; \left| w - \frac{1+k^2}{1-k^2} \right| \leq \frac{2k}{1-k^2} \right\} 
\end{equation*} 
for almost every time $t \geq 0$ and for some 
$k < 1$ independent of $t$,
then each function $f_t$ has a quasiconformal extension to the whole plane $\mathbb C$. 
More recently, Gumenyuk and Hotta \cite{GH} developed this method in the chordal case by
considering the corresponding equation
\begin{equation*}\label{PDE5}
\dot{f}_t (z) = -  p(z, t) f_t'(z),
\end{equation*}
and opened a way of
quasiconformal extensions of analytic functions defined on $\mathbb H$.
The advantage of the Loewner theory is that these extensions are yielded explicitly
by tracing the boundary values of the Loewner chain.

There is another advantage of using the Loewner chain. 
As the existence of a quasiconformal extension depends only on the boundary curve of the image domain, it is natural to consider 
the limit at the boundary
corresponding to condition \eqref{Ssup1}. 
In this case, 
one has to assume first that $f$ is already known to be univalent having a Jordan domain as its image. 
Towards this direction, we refer to Becker's work (see \cite{Be80-1, Be80}). 
As we mentioned above, he obtained a method for global quasiconformal extendibility.
Moreover, this implies that if the time-variable $t$ is taken only over an interval $[0, \tau)$ 
then the extension only to the disk $\{z \in \mathbb C:|z| < R \}$ for some $R > 1$ can be obtained. 
Based on that, the following sufficient condition for quasiconformal extendibility of univalent analytic functions was proved. See \cite[Theorem 1]{Be80-1}, and also \cite[Theorem 5.4]{Be80} and \cite[Theorem 3]{Be87}.

\begin{theorem}[Becker] \label{Beck}
Let $f$ be univalent and analytic in $\mathbb D$ such that $f(\mathbb D)$ is a Jordan domain. If 
\begin{equation}\label{Slimsup1}
\limsup_{|z| \to 1^-} \frac{1}{2}(1 - |z|^2)^2 |Sf(z)| \leq k \;(< 1), 
\end{equation}
then $f$ admits a quasiconformal extension to the disk $\{z\in \mathbb C: |z| < R \}$ for some $R > 1$ whose complex dilatation 
$\mu$ has the form in \eqref{Smu1}.
\end{theorem}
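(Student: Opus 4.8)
The plan is to exhibit $f$ as the initial element $f_{0}$ of a radial Loewner chain $(f_{t})_{t\in[0,\tau)}$, valid for a short time $\tau>0$, whose Herglotz function $p(z,t)$ has image contained in the disk $U(\kappa)$ (as in the Introduction, with $\kappa$ in place of $k$) for some $\kappa\in(k,1)$; then Becker's method --- recalled above for $\mathbb D$, see \cite{Be80-1,Be80} --- produces a $\kappa$-quasiconformal extension of $f=f_{0}$ to $\{|z|<R\}$ with $R:=e^{\tau}>1$, namely $F(z):=f_{\log|z|}(z/|z|)$ for $1\le|z|<R$, and the complex dilatation of $F$ is read directly off $p$. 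As preliminary reductions I would normalize $f(0)=0$, $f'(0)=1$; use Carath\'eodory's theorem (since $f(\mathbb D)$ is a Jordan domain) to extend $f$ to a homeomorphism of $\overline{\mathbb D}$; and, having fixed $\kappa\in(k,1)$, invoke \eqref{Slimsup1} to choose $\rho_{0}\in(0,1)$ with $\tfrac12(1-|z|^{2})^{2}|Sf(z)|\le\kappa$ on $\rho_{0}\le|z|<1$, while $Sf$ and $Pf$ stay bounded on $|z|\le\rho_{0}$ by compactness.

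For the chain I would take the Becker/Ahlfors--Weill chain
\[
 f_{t}(z):=f(e^{-t}z)+\frac{(e^{t}-e^{-t})\,z\,f'(e^{-t}z)}{1-\tfrac12(e^{t}-e^{-t})\,z\,Pf(e^{-t}z)},
\]
so that $f_{0}=f$, $f_{t}'(0)=e^{t}$, and a short computation --- most transparent after writing $f_{t}=\tilde\eta_{2}/\tilde\eta_{1}$ with $\tilde\eta_{j}(z):=\eta_{j}(e^{-t}z)+(e^{t}-e^{-t})z\,\eta_{j}'(e^{-t}z)$ for the fundamental system $\eta_{1}=(f')^{-1/2}$, $\eta_{2}=f(f')^{-1/2}$ of $2\eta''+(Sf)\eta=0$ and using $\eta_{1}\eta_{2}'-\eta_{1}'\eta_{2}\equiv1$ --- gives $\dot f_{t}(z)=z\,p(z,t)\,f_{t}'(z)$ with
\[
 p(z,t)=\frac{1-q(z,t)}{1+q(z,t)},\qquad q(z,t):=\tfrac12\,(1-e^{-2t})^{2}\,z^{2}\,Sf(e^{-t}z).
\]
Since $w\mapsto(1-w)/(1+w)$ maps $\{|w|\le\kappa\}$ onto $U(\kappa)$, the inclusion $p(\mathbb D,t)\subset U(\kappa)$ is equivalent to $|q(z,t)|\le\kappa$ on $\mathbb D$. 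Writing $w:=e^{-t}z$, so $|w|<e^{-t}$: where $|w|\ge\rho_{0}$ the hypothesis gives $|q(z,t)|\le\kappa\,(e^{t}-e^{-t})^{2}|w|^{2}/(1-|w|^{2})^{2}<\kappa$ \emph{for every} $t$, because $(e^{t}-e^{-t})|w|+|w|^{2}<(e^{t}-e^{-t})e^{-t}+e^{-2t}=1$; where $|w|<\rho_{0}$ one merely has $|q(z,t)|\le\tfrac12(e^{t}-e^{-t})^{2}\rho_{0}^{2}\sup_{|\zeta|\le\rho_{0}}|Sf(\zeta)|$, which tends to $0$ as $t\to0$, so a sufficiently small $\tau>0$ also secures $|q(z,t)|\le\kappa$ there. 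Hence $p$ is Herglotz with $p(\mathbb D,t)\subset U(\kappa)$ for all $t\in[0,\tau)$.

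It then remains to confirm that $(f_{t})_{t\in[0,\tau)}$ is a bona fide Loewner chain and to extract the dilatation. Holomorphy and local univalence of $f_{t}$ reduce to the denominator $1-\tfrac12(e^{t}-e^{-t})\,z\,Pf(e^{-t}z)$ being zero-free on $\mathbb D$: at $t=0$ it equals $1$, and for $t\in[0,\tau)$ (shrinking $\tau$) one keeps it zero-free by controlling $\tfrac12(e^{t}-e^{-t})z\,Pf(e^{-t}z)$ --- using, near $\partial\mathbb D$, the bound on the pre-Schwarzian that \eqref{Slimsup1} imposes on an $f$ with Jordan image, and, away from $\partial\mathbb D$, compactness. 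The standard Loewner theory then upgrades this: solving the radial Loewner ODE $\dot\varphi_{0,t}=-\varphi_{0,t}\,p(\varphi_{0,t},t)$, $\varphi_{0,0}=\mathrm{id}$, produces univalent self-maps $\varphi_{0,t}$ of $\mathbb D$ fixing $0$; differentiating shows $f_{t}\circ\varphi_{0,t}\equiv f_{0}=f$, which together with the local univalence of $f_{t}$ and its perturbative closeness to $f$ for small $t$ yields that each $f_{t}$ is univalent and $f_{0}\prec f_{t}$. Feeding this chain on $[0,\tau)$ into Becker's extension construction gives the $\kappa$-quasiconformal extension $F$ of $f$ to $\{|z|<R\}$; a direct computation of $\partial_{\bar\zeta}F/\partial_{\zeta}F$ at an exterior point $\zeta$ (with $t=\log|\zeta|$, $\omega=\zeta/|\zeta|$) yields $\mu_{F}(\zeta)=\omega^{2}\,(p(\omega,t)-1)/(p(\omega,t)+1)=-\omega^{2}\,q(\omega,t)$, and substituting $q$ and rewriting via $z=1/\bar\zeta$ (so $|z|^{2}=e^{-2t}$ and $z/\bar z=\omega^{2}$) gives precisely $\mu(1/\bar z)=-\tfrac12(z/\bar z)^{2}(1-|z|^{2})^{2}Sf(z)$, i.e.\ \eqref{Smu1}.

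The hard part is the zero-freeness just mentioned: ensuring the $f_{t}$ remain honest univalent holomorphic functions on all of $\mathbb D$ throughout a \emph{single} interval $[0,\tau)$. Hypothesis \eqref{Slimsup1} controls only the boundary behaviour of $Sf$, hence of $Pf$, whereas the interior must be handled by compactness and by continuity from the value $p\equiv1$ at $t=0$; reconciling the boundary estimate and the interior estimate into a common $\tau$ --- simultaneously with $|q|\le\kappa$ --- is the delicate point, and it is exactly here that the Jordan-domain assumption is genuinely used, through the boundary regularity it confers on $f$. By contrast, the final identification of the complex dilatation with \eqref{Smu1} is, once the chain is in hand, a routine but careful computation.
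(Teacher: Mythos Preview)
The paper does not supply its own proof of Theorem~\ref{Beck}; it is quoted from Becker's work \cite{Be80-1,Be80,Be87}. Your proposal reconstructs precisely the radial Loewner chain method that Becker used and that the paper adapts to the half-plane in Section~\ref{mainproof}. In particular, your explicit chain $f_t$, the Herglotz function $p=(1-q)/(1+q)$ with $q(z,t)=\tfrac12(1-e^{-2t})^{2}z^{2}Sf(e^{-t}z)$, and the verification $|q|\le\kappa$ on $[0,\tau)$ are all correct; the inequality $(e^{t}-e^{-t})|w|+|w|^{2}<1$ for $|w|<e^{-t}$ is exactly the trick needed near the boundary, and the final dilatation computation is right.

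There is, however, a genuine soft spot. You reduce holomorphy of $f_t$ to the denominator $1-\tfrac12(e^{t}-e^{-t})\,z\,Pf(e^{-t}z)$ being zero-free, and near $\partial\mathbb D$ you appeal to ``the bound on the pre-Schwarzian that \eqref{Slimsup1} imposes on an $f$ with Jordan image.'' The universal univalence bound $(1-|w|^{2})|Pf(w)|\le6$ only gives $|\tfrac12(e^{t}-e^{-t})zPf(e^{-t}z)|\lesssim 3e^{t}$, which is not $<1$. The sharper estimate you need is essentially Proposition~\ref{PSdisk}, but in this paper that proposition is \emph{derived from} Theorem~\ref{Beck} (via the quasiconformal extension it produces and Lehto's majorant principle). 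Invoking it here would be circular. Becker's original 1980 argument must therefore avoid this route; one option is to work with the quotient $f_t=\tilde\eta_2/\tilde\eta_1$ as a meromorphic function, note that the Wronskian $\tilde\eta_1\tilde\eta_2'-\tilde\eta_1'\tilde\eta_2=e^{t}(1+q)\neq0$ forbids common zeros, and then argue that no pole of $f_t$ can enter $\mathbb D$ from $\partial\mathbb D$ for small $t$---this is where the Jordan-domain hypothesis, via the Carath\'eodory extension and the compactness of $\overline{\mathbb D}$, does the real work. Your sketch of the global univalence (``local univalence plus perturbative closeness to $f$'') is likewise the right idea, but needs the same compactness ingredient spelled out; as the paper remarks after Lemma~\ref{newlemma}, this is precisely what makes the disk case essentially no harder than the global Ahlfors--Weill situation, in contrast to the half-plane where assumptions~$\mathbf{(A)}$ or~$\mathbf{(B)}$ are required.
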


We note that under the same assumption of the above theorem, $f$ given on a smaller disk 
$\{z\in \mathbb C: |z| < R-\varepsilon \}$
for any $\varepsilon>0$
can be further extended quasiconformally to $\mathbb C$
by a general fact of quasiconformal extension as in \cite[Theorem II.1.8]{LV}.

In this paper, we will give, by means of the chordal Loewner differential equation, the half-plane analogue of Theorem \ref{Beck},
which is also the boundary limit version in the half-plane case of the Ahlfors--Weill extension:
\begin{theorem}\label{main1}
Let $h$ be univalent and analytic in $\mathbb H$ with $\lim_{z \to \infty}h(z) = \infty$ such that $h(\mathbb H)$ is a Jordan domain. 
Let $h$ satisfy the condition
\begin{equation}\label{Slimsup2}
\lim_{{\rm Re} z\to 0^+} \frac{1}{2}(2{\rm Re} z)^2 \left|Sh(z)\right| = 0.
\end{equation}
Suppose further that either  
\begin{enumerate}
\item[$\mathbf{(A)}$] both $h$ and $h^{-1}$ are locally uniformly continuous in a neighborhood of $\infty$,  or 
\item[$\mathbf{(B)}$] $h$ is locally quasiconformally extendible to a neighborhood of $\infty$.
\end{enumerate}
Then, the function
\begin{equation}\label{extension}
\hat h(z) = 
\begin{cases}
h(z), & \text {if}\;\; {\rm Re}\, z \geq 0,\\
h(z^*) + \frac{(2{\rm Re} z) h'(z^*)}{1 - ({\rm Re}\, z) Ph(z^*)}  \quad (z^* := -\bar z),    & \text{if}\;\;  -\tau < {\rm Re}\, z < 0,
\end{cases}
\end{equation}
defines a quasiconformal extension of $h$ over i$\mathbb R$ to 
$\mathbb{H}^{*}_{(0,\tau)} := \{z \in \mathbb C: -\tau < {\rm Re}\, z < 0 \}$ for some $\tau > 0$  
such that its complex dilatation $\mu$ on $\mathbb{H}^{*}_{(0,\tau)}$ has the form 
\begin{equation}\label{Smu3}
\mu(z) = - \frac{1}{2}(2{\rm Re}z)^2  Sh(z^*).
\end{equation} 
\end{theorem}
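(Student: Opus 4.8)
The plan is to realize $\hat h$ as the continuous boundary trace of a chordal Loewner chain emanating from $h$, and then to appeal to the chordal counterpart of Becker's quasiconformal extension criterion (the half-plane analogue of the mechanism behind Theorem \ref{Beck}). Reading off the right-hand side of \eqref{extension}, the natural candidate for the chain is
\[
h_t(z) = h(z+t) - \frac{2t\, h'(z+t)}{1 + t\, Ph(z+t)}, \qquad z \in \mathbb H,\ t \ge 0,
\]
so that $h_0 = h$ and, for $z = -t+iy$ with $t>0$, we have $z^* = -\bar z = t+iy$ and the boundary value $h_t(iy)$ is precisely the right-hand side of \eqref{extension}. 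A direct differentiation, using $Ph' = Sh + \tfrac12 (Ph)^2$, gives
\[
h_t'(z) = \frac{h'(z+t)\,\bigl(1 + 2t^2 Sh(z+t)\bigr)}{\bigl(1 + t\, Ph(z+t)\bigr)^{2}}, \qquad \partial_t h_t(z) = - q(z,t)\, h_t'(z), \qquad q(z,t) = \frac{1 - 2t^2 Sh(z+t)}{1 + 2t^2 Sh(z+t)},
\]
so that $(h_t)$ formally solves the chordal Loewner equation with datum $q$.

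The crux of the argument is an estimate showing that, for $\tau$ small, $q$ lands uniformly inside Becker's disk $U(k)$ with $k<1$. Writing $\psi(w):=\tfrac12(2\,{\rm Re}\, w)^2|Sh(w)|$ for the quantity appearing in \eqref{Slimsup2}, and using ${\rm Re}(z+t)\ge t$,
\[
\bigl|2t^2 Sh(z+t)\bigr| = \frac{t^2}{({\rm Re}\, z + t)^2}\,\psi(z+t) \le \psi(z+t);
\]
splitting into the region where ${\rm Re}(z+t)$ is below a threshold $\delta$ (where \eqref{Slimsup2} makes $\psi$ small) and its complement (where the prefactor $t^2/({\rm Re}\, z+t)^2 \le \tau^2/\delta^2$ is small, $\psi$ being bounded on $\mathbb H$ by \eqref{Slimsup2} near $i\mathbb R$, analyticity on a compact middle region, and $\mathbf{(A)}$/$\mathbf{(B)}$ near $\infty$), one gets
\[
\varepsilon(\tau) := \sup\Bigl\{\,\bigl|2t^2 Sh(z+t)\bigr| : z \in \mathbb H,\ 0 \le t < \tau\,\Bigr\} \longrightarrow 0 \qquad (\tau \to 0^+).
\]
As $q$ is the image of $2t^2 Sh(z+t)$ under $\sigma \mapsto (1-\sigma)/(1+\sigma)$, it follows that $q(\mathbb H, t) \subseteq U(\varepsilon(\tau))$ for all $t \in [0,\tau)$, so $q$ is a Herglotz datum with ${\rm Re}\, q \ge 0$, and $k = \varepsilon(\tau) < 1$ for $\tau$ small. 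The same bound — together with the control of $Ph$ near $i\mathbb R$ and near $\infty$ extracted from \eqref{Slimsup2}, $\mathbf{(A)}$/$\mathbf{(B)}$ and the Riccati relation above — shows that $1 + t\, Ph(z+t)$ and $1 + 2t^2 Sh(z+t)$ do not vanish on $\mathbb H$ for $t \in [0,\tau)$, so $h_t$ is well defined and locally univalent there.

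Feeding the Herglotz datum $q$ and the univalent initial value $h$ into the general theory of chordal Loewner chains, the family $(h_t)_{t\in[0,\tau)}$ is a genuine chordal Loewner chain; in particular each $h_t$ is univalent on $\mathbb H$ and extends continuously to $i\mathbb R$, its behaviour at $\infty$ being governed by $\mathbf{(A)}$/$\mathbf{(B)}$. The chordal analogue of Becker's criterion then produces a quasiconformal extension of $h = h_0$ across $i\mathbb R$ whose restriction to $\mathbb H^{*}_{(0,\tau)}$ is $z \mapsto h_t(iy)$ with $z = -t+iy$ — i.e. exactly $\hat h$ of \eqref{extension}. To identify the complex dilatation, put $t = -{\rm Re}\, z$ and $\zeta = iy = (z-\bar z)/2$, differentiate $\hat h(z) = h_t(\zeta)$ and insert the Loewner equation:
\[
\partial_z \hat h = \tfrac12 h_t'(\zeta)\,(1+q), \qquad \partial_{\bar z} \hat h = \tfrac12 h_t'(\zeta)\,(q-1), \qquad \mu(z) = \frac{\partial_{\bar z}\hat h}{\partial_z\hat h} = \frac{q-1}{q+1} = -2t^2 Sh(z^*) = -\tfrac12 (2\,{\rm Re}\, z)^2 Sh(z^*),
\]
which is \eqref{Smu3}; moreover $|\mu(z)| \le \varepsilon(\tau) < 1$, confirming quasiconformality on $\mathbb H^{*}_{(0,\tau)}$.

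The main obstacle is the assertion in the last paragraph that each $h_t$ is univalent on all of $\mathbb H$, not merely locally univalent or univalent near $i\mathbb R$: because \eqref{Slimsup2} is only a boundary limit, the correction term $h_t - h(\cdot + t)$ is not uniformly small, so no elementary perturbation argument applies, and precisely this global information is what the chordal Loewner-chain machinery has to supply. Making this rigorous means verifying the structural hypotheses of that machinery — holomorphy and nonnegativity of ${\rm Re}\, q$, and the hydrodynamic normalization at $\infty$ — and it is here that the extra assumptions $\mathbf{(A)}$/$\mathbf{(B)}$ on the non-compact boundary point are indispensable and the two cases require somewhat different treatments. A secondary technical point is the passage to the boundary $i\mathbb R$ in the dilatation computation, which must be justified through the continuity and Sobolev regularity of the chain up to $i\mathbb R$.
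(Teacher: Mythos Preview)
Your outline correctly identifies the candidate chain $(h_t)$, the Herglotz function $q$, and the dilatation computation. The genuine gap is the sentence ``Feeding the Herglotz datum $q$ and the univalent initial value $h$ into the general theory of chordal Loewner chains, the family $(h_t)_{t\in[0,\tau)}$ is a genuine chordal Loewner chain; in particular each $h_t$ is univalent on $\mathbb H$.'' No such mechanism exists. Given a Herglotz function, the general theory produces an evolution family $(\varphi_{s,t})$ and \emph{some} Loewner chain $(f_t)$, unique only up to post-composition by a conformal map of $\bigcup_t f_t(\mathbb H)$. From $\dot h_t = -q\,h_t'$ one only obtains $h_t\circ\varphi_{0,t}=h$, hence $h_t=h\circ\varphi_{0,t}^{-1}$ is univalent on $\varphi_{0,t}(\mathbb H)$; on the residual strip $\mathbb H\setminus\varphi_{0,t}(\mathbb H)$ nothing is known, and this strip is exactly the region whose boundary trace produces the extension. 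Verifying ``holomorphy, ${\rm Re}\,q\ge 0$, and hydrodynamic normalization at $\infty$'' does not close this: there is no uniqueness theorem for solutions of the chordal PDE on a finite interval with prescribed $h_0$ that forces univalence on all of $\mathbb H$. The paper proves univalence of $h_t$ \emph{directly}, and this is its main technical content (Theorems~\ref{univalentA} and~\ref{univalentB}): local univalence on small squares touching $i\mathbb R$ via the Schwarzian criterion, univalence on $\{{\rm Re}\,z>r\}$ via $h_t=h\circ\varphi_{0,t}^{-1}$, and then a separate argument ruling out $h_t(z_1)=h_t(z_2)$ for $|z_1-z_2|\ge r$ --- which is precisely where $\mathbf{(A)}$ (uniform continuity of $h,h^{-1}$ near $\infty$) or $\mathbf{(B)}$ (quasisymmetry of a global qc extension) is used, together with the Koebe distortion bound $t|h'(z+t)|\le 4|h(z+t)-h(z)|$.

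Two secondary points. First, you misplace the role of $\mathbf{(A)}$/$\mathbf{(B)}$: they are not needed to bound $\psi$ (one has $\tfrac12(2{\rm Re}\,w)^2|Sh(w)|\le 3$ globally for any univalent $h$ by Kraus--Nehari), nor for any ``hydrodynamic normalization'' (none is assumed); they enter only in the global injectivity step above. Second, the control $1+t\,Ph(z+t)\ne 0$ that makes $h_t$ well defined does not follow from the Riccati relation alone: one needs $\lim_{{\rm Re}\,z\to 0^+}(2{\rm Re}\,z)|Ph(z)|=0$, and deducing this from \eqref{Slimsup2} is the content of Theorem~\ref{PS}, proved separately via Becker's majorization on tangent disks. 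After that, the paper obtains $t|Ph(z+t)|\le k/2$ and $2t^2|Sh(z+t)|\le k$ for all $z\in\mathbb H$ by applying the generalized maximum principle on $i\mathbb R$, rather than by the splitting you describe.
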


Condition $\mathbf{(A)}$ is a practical assumption. For instance, if $h$ satisfies the hydrodynamic normalization at $\infty$
uniformly in $\mathbb H$
(see \cite{GB}),
then this condition follows. Condition $\mathbf{(B)}$ is an a priori assumption in the sense that if $h$ admits such a
quasiconformal extension as in the conclusion of the theorem, then this condition must be satisfied.

We remark that if assumptions $\mathbf{(A)}$ and $\mathbf{(B)}$ are not imposed then the assertion of Theorem \ref{main1} 
does not hold true. This will be verified by constructing an explicit example in Corollary \ref{example}. 
This, in particular, illustrates that the case of half-plane shown in Theorem \ref{main1} represents a more complicated object
where we have to deal with compactness problems on the boundary that do not arise in the case of disk.
In Section \ref{vanishnorm}, we develop the arguments for this example. 

We demonstrate the proof of Theorem \ref{main1} in Section \ref{mainproof} by relying on the Loewner theory. 
In Section \ref{Loewner}, we collect several necessary definitions and results from this theory which will be used in the proofs
as well as a concise summary of the Loewner theory from a modern viewpoint.

Under the circumstances of Theorem \ref{main1},
a family of analytic functions initiated from $h$ is defined canonically by
\begin{equation*}\label{family0}
h_t(z)=h(z+t)-\frac{2th'(z+t)}{1+tPh(z+t)}
\end{equation*}
for $z \in \mathbb H$ and for $t$ within a limited period of time starting at $0$. 
An important step for the proof of Theorem \ref{main1} is to show that
$h_t$ is univalent when $t \geq 0$ is sufficiently small, which turns out to be a Loewner chain
over an interval $[0,\tau)$ for some $\tau>0$. Gumenyuk and Hotta \cite{GH} carried out this under the circumstances where the time $t$ can be extended to $+\infty$, but their argument does not work for the case of limited time period. We remark that, differently from the case of half-plane, the corresponding argument for Theorem \ref{Beck} is essentially the same as that for the case where the time
$t$ extends to $+\infty$.  
Our method of showing that $h_t$ is univalent in a small interval of time involves novel techniques. This is achieved in Theorem \ref{univalentA} under assumption $\mathbf{(A)}$ and 
Theorem \ref{univalentB} under assumption $\mathbf{(B)}$. 

For the proof, we need to show that
condition \eqref{Slimsup2} is equivalent to
\begin{equation}\label{Plimsup2}
\lim_{{\rm Re} z\to 0^+} (2{\rm Re} z) \left|Ph(z)\right| = 0.
\end{equation}
In Section \ref{vanishnorm}, we prove this equivalence in Theorem \ref{PS} which seems to be natural, but is missing in the literature. 
Moreover, by replacing condition \eqref{Slimsup2} with \eqref{Plimsup2}, we can also obtain the analogous result as Theorem \ref{Pmain12} to
Theorem \ref{main1}, which represents the complex dilatation $\mu$ in terms of $Ph$.
This is stated briefly in Section \ref{preversion}. 

As a special case of Theorem \ref{main1} and also of Theorem \ref{Pmain12},
the following form can be used conveniently for subspaces of the universal Teichm\"uller space because 
univalent analytic functions are already assumed to be quasiconformally extendible to $\overline{\mathbb C}$ in these settings.
The importance of this result lies not on the quasiconformal extendibility of course, but on
the representation of the complex dilatation.

\begin{corollary}\label{main2}
Let $h$ be univalent and analytic in $\mathbb H$ that is quasiconformally extendible to $\mathbb C$
with $\lim_{z \to \infty}h(z) = \infty$.
If $h$ satisfies \eqref{Slimsup2}, then the assertion in Theorem \ref{main1} holds, that is,
it has the extension whose complex dilatation is of the form in \eqref{Smu3} in a strip domain $\mathbb{H}^{*}_{(0,t)}$
for some $t>0$
over $i\mathbb R$,
and further extendible to $\mathbb C$ keeping this initial extension.
\end{corollary}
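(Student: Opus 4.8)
The plan is to verify the hypotheses of Theorem~\ref{main1}, apply it, and then globalise the resulting extension by a reflection argument. Since $h$ is quasiconformally extendible to $\mathbb{C}$, its image $h(\mathbb{H})$ is a quasidisk, hence in particular a Jordan domain, and the restriction of a global quasiconformal extension of $h$ to a neighbourhood of $\infty$ provides a local quasiconformal extension there, so condition~$\mathbf{(B)}$ holds; condition~\eqref{Slimsup2} is assumed. Hence Theorem~\ref{main1} applies and yields $\tau>0$ for which $\hat h$ from \eqref{extension} is a quasiconformal extension of $h$ over $i\mathbb{R}$ to $\mathbb{H}^{*}_{(0,\tau)}$ with complex dilatation of the form \eqref{Smu3} on $\mathbb{H}^{*}_{(0,\tau)}$. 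This already gives the first part of the assertion for any $t\le\tau$. Moreover, if $G$ is any quasiconformal extension of $h$ whose complex dilatation has the form \eqref{Smu3} on $\mathbb{H}^{*}_{(0,t)}$, then $G$ and $\hat h$ have the same complex dilatation on $\{{\rm Re}\,z>-t\}$, so $G\circ\hat h^{-1}$ is conformal on $\hat h(\{{\rm Re}\,z>-t\})$ and equals the identity on $h(\mathbb{H})$, hence is the identity; thus $G=\hat h$ on $\{{\rm Re}\,z>-t\}$, and ``keeping this initial extension'' is automatic once a global extension is produced.

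For the globalisation, fix $t\in(0,\tau)$ and put $\Omega_t=\{z:{\rm Re}\,z>-t\}$, $\ell_t=\{z:{\rm Re}\,z=-t\}$. Then $\hat h|_{\Omega_t}$ is a quasiconformal homeomorphism of the half-plane $\Omega_t$ onto a Jordan domain whose boundary is $\hat h(\ell_t)\cup\{\infty\}$ (here $\hat h$ extends continuously to $\infty$ with value $\infty$, since $\hat h=h$ on $\mathbb{H}$ and $h(z)\to\infty$). The crux is to show that $\hat h(\ell_t)\cup\{\infty\}$ is a \emph{quasicircle}; once this is known, $\hat h(\Omega_t)$ is a quasidisk. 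This is precisely the step that requires the hypothesis of \emph{global} quasiconformal extendibility of $h$, and it is the compactness phenomenon at $\infty$ emphasised in the introduction that makes the half-plane case genuinely harder than the disk case of Theorem~\ref{Beck}: there the curve corresponding to $\ell_t$ lies compactly inside the domain of quasiconformality, so its image is automatically a quasicircle, whereas here $\infty$ lies on the closure of $\Omega_\tau$. To carry out this step I would use the identity $\hat h(-s+iy)=h_s(iy)$ for $0\le s<\tau$, obtained by comparing \eqref{extension} with the canonical family $h_s(z)=h(z+s)-2sh'(z+s)/(1+sPh(z+s))$; it gives $\hat h(\ell_t)=h_t(i\mathbb{R})$ and $h_t(\infty)=\infty$. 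Since $h_0=h$ is quasiconformally extendible to $\mathbb{C}$, I would then show — this is the main obstacle — that the construction in the proof of Theorem~\ref{main1} under~$\mathbf{(B)}$ (via Theorem~\ref{univalentB}) in fact yields a \emph{global} quasiconformal extension of $h_t$ for all sufficiently small $t$; consequently $h_t(\mathbb{H})$ is a quasidisk and its boundary $h_t(i\mathbb{R})\cup\{\infty\}=\hat h(\ell_t)\cup\{\infty\}$ is a quasicircle.

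Granting that $\hat h(\ell_t)\cup\{\infty\}$ is a quasicircle, both $\Omega_t$ and $\hat h(\Omega_t)$ are quasidisks, and I finish by the standard reflection argument (cf.\ \cite[Theorem~II.1.8]{LV}). Let $\lambda(z)=-2t-\bar z$ be the reflection in $\ell_t$ and let $\lambda'$ be a quasiconformal reflection of $\hat h(\Omega_t)$ in the quasicircle $\hat h(\ell_t)\cup\{\infty\}$; define the extension of $\hat h|_{\Omega_t}$ to $\{{\rm Re}\,z<-t\}$ by $\lambda'\circ(\hat h|_{\Omega_t})\circ\lambda$. Since $\lambda$ and $\lambda'$ fix $\ell_t$ and $\hat h(\ell_t)$ pointwise, the two pieces agree along $\ell_t$ and define a homeomorphism of $\overline{\mathbb{C}}$ fixing $\infty$ which is quasiconformal off the quasicircle $\hat h(\ell_t)\cup\{\infty\}$, hence quasiconformal on $\overline{\mathbb{C}}$ (and so on $\mathbb{C}$) by the removability of quasicircles. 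By construction it agrees with $\hat h$ on $\Omega_t$, hence with $h$ on $\mathbb{H}$ and with the extension of complex dilatation \eqref{Smu3} on $\mathbb{H}^{*}_{(0,t)}$, as required. The only genuine obstacle in this plan is propagating the global quasiconformal extendibility from $h$ to $h_t$ for small $t$ (equivalently, establishing the quasicircle property of $\hat h(\ell_t)\cup\{\infty\}$); the remaining steps are routine.
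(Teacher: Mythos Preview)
Your verification of the hypotheses of Theorem~\ref{main1} is correct and matches the paper's view of the corollary as an immediate specialisation: global quasiconformal extendibility of $h$ supplies condition~$\mathbf{(B)}$ and makes $h(\mathbb{H})$ a quasidisk.

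The step you flag as the ``main obstacle'' --- propagating global quasiconformal extendibility from $h$ to $h_t$ --- is in fact immediate from material already in the paper, and this is where your plan is more laborious than necessary. In the proof of Theorem~\ref{univalentA} (just after \eqref{p2}) the identity $h_t\circ\varphi_{0,t}=h$ is derived, i.e.\ $h_t=h\circ\varphi_{0,t}^{-1}$; and Theorem~\ref{chord}(iii) asserts that each $\varphi_{0,t}$ extends $k$-quasiconformally to $\overline{\mathbb C}$ fixing $\infty$. Since $h$ is globally quasiconformally extendible by hypothesis, so is $h_t$, with $h_t(\infty)=\infty$. There is no need to revisit the mechanics of Theorem~\ref{univalentB}.

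With this in hand your reflection argument goes through (now $\hat h(\ell_t)\cup\{\infty\}=h_t(i\mathbb R)\cup\{\infty\}$ is the boundary of the quasidisk $h_t(\mathbb H)$, hence a quasicircle). The paper, however, takes an even shorter route, visible in the proof of Theorem~\ref{main3}: simply pick any global quasiconformal extension $\widehat{h_t}$ of $h_t$ and set $\tilde h(z)=\widehat{h_t}(z+t)$ for ${\rm Re}\,z<-t$, which matches $\hat h$ along $\ell_t$ because $\hat h(-t+iy)=h_t(iy)$. This direct pasting avoids the quasicircle/reflection machinery entirely while yielding the same conclusion.
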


Thanks to the fact that the complex dilatation $\mu$ of the quasiconformal extension $\hat h$ in Theorem \ref{main1} is expressed explicitly by the Schwarzian derivative $Sh$ of $h$, and the way of constructing $\hat h$ by means of the boundary values of a suitable Loewner chain, 
the above
Corollary \ref{main2} can be neatly used to give an appropriate quasiconformal extension for the subspaces of the universal Teichm\"uller space smaller than the so-called little universal Teichm\"uller space on the half-plane. 
In this paper, we deal with the VMO-Teichm\"uller space as such an example.
 
The universal Teichm\"uller space $T$ can be regarded as the set of all conformal mappings $h$ (up to the composition of a M\"obius transformation) on $\mathbb H$ which can be extended to a quasiconformal homeomorphism of $\overline{\mathbb C}$. 
Subspaces of $T$ are obtained by imposing some conditions on $h$ in general.
If this condition is conformally invariant, then there is no difference between the cases
where we consider it on $\mathbb D$ and on $\mathbb H$. The Weil--Petersson Teich\"uller space and the BMO-Teichm\"uller space are such examples, which have received much attention over the years (see \cite{AZ, Bi, Sh, SW, WM-2, WM-3} and references therein). 

On the contrary, if the condition is given on the boundary, for instance, the vanishing condition of the supremum norm of Schwarzian derivative as for the little universal Teichm\"uller space, Teichm\"uller spaces 
on $\mathbb D$ and on $\mathbb H$
are different. Becker and Pommerenke \cite{BP} worked on this for $\mathbb D$, and later
Hu, Wu and Shen \cite{HWS} considered this on $\mathbb H$.

The VMO-Teichm\"uller space is the subspace of the BMO-Teichm\"uller space defined by certain vanishing condition on the boundary (see \cite{SW}). If this space is 
defined on the half-plane $\mathbb H$, it is the set of all $h \in T$ satisfying $\log h' \in {\rm VMOA}(\mathbb H)$, the space of analytic functions in $\mathbb H$ of vanishing mean oscillation. In Section \ref{app},  
as an application of  Corollary \ref{main2}, we will prove the following result. 
This fills in the missing part left in the study of various models of VMO-Teichm\"uller space
on the half-plane (see \cite[Theorem 2.2]{Sh}, \cite{WM-2}). 

\begin{theorem}\label{main3}
Let $h$ be univalent and analytic in $\mathbb H$ such that
$h$ is quasiconformally extendible to $\overline{\mathbb C}$ with $h(\infty) = \infty$. 
If $\log h' \in {\rm VMOA}(\mathbb H)$, then $h$ admits a quasiconformal extension to $\overline{\mathbb C}$ 
such that its complex dilatation $\tilde \mu$ induces a vanishing Carleson measure 
\begin{equation}\label{carleson}
\lambda_{\tilde \mu} := \frac{|\tilde \mu(z)|^2}{(-2{\rm Re}z)}dxdy 
\end{equation}
of the left half-plane 
$\mathbb H^* := \{z = x + iy \in \mathbb C: x < 0\}$.
\end{theorem}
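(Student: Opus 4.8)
The plan is to deduce Theorem~\ref{main3} from Corollary~\ref{main2} by first translating the hypothesis $\log h'\in\mathrm{VMOA}(\mathbb H)$ into the vanishing condition \eqref{Slimsup2} needed to run the Loewner machinery, and then using the explicit form \eqref{Smu3} of the complex dilatation on the strip $\mathbb H^*_{(0,t)}$ to produce the desired vanishing Carleson measure on all of $\mathbb H^*$. The first step is a standard but essential reduction: from $\log h'\in\mathrm{VMOA}(\mathbb H)$ one knows, via the John--Nirenberg / Garnett--Jones type estimates together with the vanishing analogue, that $Ph=(\log h')'$ induces a vanishing Carleson measure $(2\mathrm{Re}\,z)|Ph(z)|^2\,dxdy$ on $\mathbb H$; in particular the nontangential boundary behavior gives $\lim_{\mathrm{Re}\,z\to 0^+}(2\mathrm{Re}\,z)|Ph(z)|=0$, which is exactly \eqref{Plimsup2}. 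By Theorem~\ref{PS} this is equivalent to \eqref{Slimsup2}. Hence $h$ satisfies all the hypotheses of Corollary~\ref{main2} (quasiconformal extendibility to $\mathbb C$ and $h(\infty)=\infty$ are assumed), and we obtain a quasiconformal extension of $h$ whose complex dilatation on some strip $\mathbb H^*_{(0,t_0)}$ is $\mu(z)=-\tfrac12(2\mathrm{Re}\,z)^2 Sh(z^*)$, which then extends to a global quasiconformal map of $\overline{\mathbb C}$.

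Next I would verify the Carleson condition \eqref{carleson} for $\tilde\mu$ on the strip $\mathbb H^*_{(0,t_0)}$ near $i\mathbb R$. Writing $w=-2\mathrm{Re}\,z>0$ for a point $z\in\mathbb H^*$, the density becomes
\[
\frac{|\tilde\mu(z)|^2}{(-2\mathrm{Re}\,z)}
= \frac{1}{(-2\mathrm{Re}\,z)}\cdot\frac14(2\mathrm{Re}\,z)^4|Sh(z^*)|^2
= \tfrac14\,(2\mathrm{Re}\,z)^3\,|Sh(z^*)|^2 .
\]
Since $z^*=-\bar z\in\mathbb H$ has $\mathrm{Re}\,z^* = -\mathrm{Re}\,z$, this says the Carleson density of $\lambda_{\tilde\mu}$ at $z$ equals a constant times $(2\mathrm{Re}\,z^*)^3|Sh(z^*)|^2$, the classical Schwarzian Carleson density on $\mathbb H$ evaluated at the reflected point. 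The key input is therefore the known equivalence (on the half-plane, analogous to the Astala--Zinsmeister / Cui--Zinsmeister picture) that for $h$ with $\log h'\in\mathrm{VMOA}(\mathbb H)$, the Schwarzian also induces a \emph{vanishing} Carleson measure $(2\mathrm{Re}\,z)^3|Sh(z)|^2\,dxdy$ on $\mathbb H$; this follows from the $Ph$ version by the identity $Sh=(Ph)'-\tfrac12(Ph)^2$ together with a Schwarz--Pick/square-function argument controlling $(Ph)'$ and the quadratic term by the Carleson norm of $Ph$. Pulling this back through the reflection $z\mapsto z^*$ (which preserves Carleson boxes based on $i\mathbb R$ up to the obvious symmetry) shows $\lambda_{\tilde\mu}$ is a vanishing Carleson measure in a strip $\{-t_0<\mathrm{Re}\,z<0\}$.

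Finally, to upgrade from a strip to the full left half-plane $\mathbb H^*$, I would invoke the general principle that the vanishing Carleson property of a measure supported near the boundary is unaffected by what happens away from the boundary: the global quasiconformal extension provided by Corollary~\ref{main2} agrees with the Loewner extension on $\mathbb H^*_{(0,t_0)}$, so on the complementary region $\{\mathrm{Re}\,z\le -t_0\}$ the dilatation $\tilde\mu$ is merely bounded by a constant $k<1$ and supported at positive distance from $i\mathbb R$; such a contribution adds at most $O(\ell)$ to the mass of any Carleson box of sidelength $\ell$ based on $i\mathbb R$ and, being supported away from the boundary, contributes nothing in the limit $\ell\to 0$. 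Combining the two regions gives that $\lambda_{\tilde\mu}$ is a vanishing Carleson measure on $\mathbb H^*$, which is the assertion. The main obstacle is the second step: establishing cleanly that the VMOA condition on $\log h'$ transfers to the \emph{vanishing} (not merely bounded) Carleson measure condition for the Schwarzian on the half-plane, handling the non-compactness of $i\mathbb R$ and the point $\infty$ with care; once that equivalence is in hand, the rest is bookkeeping with the explicit formula \eqref{Smu3} and the reflection.
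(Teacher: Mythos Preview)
Your reduction to Corollary~\ref{main2} and your computation showing that the Carleson density of $\lambda_{\tilde\mu}$ on the strip $\mathbb H^*_{(0,t_0)}$ equals (up to a constant) $(2\mathrm{Re}\,z^*)^3|Sh(z^*)|^2$ are both correct, and they match the paper's approach. The gap is in your final step. You assert that on $\{\mathrm{Re}\,z\le -t_0\}$ the bound $|\tilde\mu|\le k<1$ alone suffices because ``such a contribution adds at most $O(\ell)$ to the mass of any Carleson box of sidelength $\ell$.'' This is false: for a box $(-\ell,0)\times I$ with $\ell>t_0$, the contribution from $\{-\ell<\mathrm{Re}\,z\le -t_0\}$ is at most
\[
k^2\int_{-\ell}^{-t_0}\!\!\int_I \frac{1}{-2x}\,dy\,dx
=\frac{k^2\ell}{2}\log\!\frac{\ell}{t_0},
\]
so $\tfrac{1}{\ell}\lambda_{\tilde\mu}((-\ell,0)\times I)$ can blow up as $\ell\to\infty$. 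A vanishing Carleson measure is by definition first a (bounded) Carleson measure; a generic quasiconformal extension away from the boundary does \emph{not} supply this bound. Hence the extension furnished by Corollary~\ref{main2} on $\{\mathrm{Re}\,z<-t_0\}$ is not good enough as stated.

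The paper handles precisely this point by doing more work at time $t$: it shows $\log h_t'\in\mathrm{BMOA}(\mathbb H)$ (via the explicit formula for $Ph_t$ in terms of $Ph$ and $Sh$, together with Proposition~\ref{alpha2}), and then invokes a \emph{specific} quasiconformal extension $\widehat{h_t}$ of $h_t$ (e.g.\ the Beurling--Ahlfors extension by heat kernel) whose dilatation $\mu_t$ is known to induce a bounded Carleson measure on $\mathbb H^*$. Gluing the Loewner extension on $-t\le\mathrm{Re}\,z<0$ with $\widehat{h_t}(z+t)$ on $\mathrm{Re}\,z<-t$ then gives a $\tilde\mu$ whose Carleson norm is controlled on large boxes (by the BMOA/Carleson property of $\mu_t$) and vanishes on small boxes (by your Schwarzian computation). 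Your outline becomes correct once you insert this step: replace the arbitrary extension beyond the strip by one coming from $\log h_t'\in\mathrm{BMOA}(\mathbb H)$.
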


\section{Preliminaries on the Loewner theory}\label{Loewner}
In this section, we give several basic definitions and results on the (generalized) Loewner theory, 
proposed by Bracci, Contreras and D\'iaz-Madrigal \cite{BCDM09, BCDM12}, which allows us to 
treat evolution families with inner fixed points (the radial case) and with boundary fixed points (the chordal case)
at the same time. 
This unified theory relies partially on the theory of one parametric semigroups, which is actually 
the autonomous version of the Loewner theory. 

\subsection{Generalized Loewner theory}
Let $D \subset \overline{\mathbb C}$ be a simply connected domain conformally equivalent to $\mathbb D$ and $\mathbb H$. 
We denote the family of all analytic functions on $D$ by 
${\rm Hol}(D, \mathbb C)$, and the family of all analytic self-maps of $D$ by ${\rm Hol}(D)$. 

Let $\mathcal U \subset {\rm Hol}(\mathbb D)$ be a semigroup with respect to the operation of composition containing 
the identity map ${\rm id}_{\mathbb D}$. A family 
$(\phi_t)_{t \geq 0}$ in $\mathcal U$ such that $\phi_0 = {\rm id}_{\mathbb D}$, $\phi_t\circ\phi_s = \phi_{t+s}$ for any $s, t \geq 0$, and $\phi_t(z) \to z$ locally uniformly on $ \mathbb D$ as $t \to 0$, is called a continuous one-parameter semigroup. 
It is known \cite{BP78} that for any such semigroup $(\phi_t)$ there exists a function $G\in {\rm Hol}(\mathbb D, \mathbb C)$ such that for each $z \in \mathbb D$ the function $\phi_t(z)$ is the unique solution of the initial value problem 
\begin{equation}\label{ODE1}
\frac{dw(z,t)}{dt} = G(w(z,t)), \quad w(z,0) = z.
\end{equation}
The function $G$ is called the infinitesimal generator of $(\phi_t)$.
A criterion for a function $G \in {\rm Hol}(\mathbb D, \mathbb C)$ to be an infinitesimal generator of some continuous one-parameter
semigroup
is the following Berkson--Porta representation: 
\begin{equation}\label{BP1}
G(z) = (\tau - z)(1 - \bar\tau z)\,p(z)
\end{equation}
for a point $\tau \in \overline{\mathbb D}$ and a function $p \in {\rm Hol}(\mathbb D, \mathbb C)$ with ${\rm Re}\,p(z) \geq 0$ for all $z \in \mathbb D$. Moreover, if $G \not\equiv 0$, then such a representation is unique. In fact, the point $\tau$ is the common Denjoy--Wolff point (see below for its definition) of all $\phi_t$'s that are different from ${\rm id}_{\mathbb D}$.  
Such a correspondence between the continuous one-parameter semigroup $(\phi_t)$ and the infinitesimal generator $G$ is one-to-one.

Now we introduce the Herglotz vector field in $\mathbb D$, which can be regarded as a time-dependent infinitesimal generator. One of its equivalent definitions is as follows (see \cite[Theorem 4.8]{BCDM12}).  
A Herglotz vector field in $\mathbb D$ is a map $G:\; \mathbb D \times [0, +\infty) \to \mathbb C$ of the form 
\begin{equation}\label{BP2}
G(z, t) = (\tau(t) - z)(1 - \overline{\tau(t)}z)\, p(z, t)
\end{equation}
for all $z \in \mathbb D$ and almost every $t \in [0, +\infty)$, where $\tau:[0, +\infty) \to \overline{\mathbb D}$ is a measurable function and $p:\mathbb D \times [0, +\infty) \to \mathbb C$ is a Herglotz function defined as follows.
\begin{definition}
A Herglotz function on $\mathbb D$ is a map $p:\mathbb D \times [0, +\infty) \to \mathbb C$ satisfying the following conditions: 
\begin{enumerate}
\item[(HF1)] $p(z, \cdot)$ is locally integrable on $[0, +\infty)$ for all $z \in \mathbb D$;
\item[(HF2)] $p(\cdot, t)$ is analytic on $\mathbb D$ for almost every $t \in [0, +\infty)$ and ${\rm Re}\,p(\cdot, t) \geq 0$. 
\end{enumerate}
\end{definition}
Moreover, \cite[Theorem 4.8]{BCDM12} asserts that if two couples $(p_1,\tau_1)$ and $(p_2,\tau_2)$ 
generate the same $G(z,t)$ up to a set of measure zero on the $t$-axis, then 
$p_1(z,t) = p_2(z,t)$ for all $z \in \mathbb D$ and almost every $t \in [0, +\infty)$, 
and $\tau_1(t) = \tau_2(t)$ for almost every $t \in [0, +\infty)$ satisfying $G(\cdot,\, t) \not\equiv 0$. 

Furthermore, it is known that the evolution family $(\varphi_{s,t})_{t\geq s\geq 0}$ is the unique solution of the following initial value problem for the generalized Loewner--Kufarev ODE that is driven by the Herglotz vector field $G$ 
(see \cite[Section 2.1]{CDMG13}, \cite[Chapter 18]{Kur}): 
\begin{equation}\label{ODE2}
\frac{d\varphi_{s,t}(z)}{dt} = G(\varphi_{s,t}(z),\, t),  \quad \varphi_{s,s}(z) = z
\end{equation}
for any initial point $z \in \mathbb D$, any starting time $s \geq 0$ and almost every $t \geq s$.  
This equation establishes a one-to-one correspondence between the evolution families $(\varphi_{s,t})$ and the Herglotz vector fields $G$ up to a set of measure zero on the $t$-axis (see \cite[Theorem 1.1]{BCDM12}).  
Comparing \eqref{ODE1} and \eqref{ODE2}, we can regard an evolution family as a non-autonomous analogue of a continuous one-parameter semigroup. 

An independent definition of an evolution family can be given as follows:
\begin{definition}[{\cite{BCDM12}}]
An evolution family in $\mathbb D$ is a two-parameter family $(\varphi_{s,t})_{t\geq s\geq 0} \subset {\rm Hol}(\mathbb D)$ satisfying the following conditions:
\begin{enumerate}
\item[(EF1)] $\varphi_{s,s} = {\rm id}_{\mathbb D}$ for all $s \geq 0$;
\item[(EF2)] $\varphi_{s,t} = \varphi_{u,t} \circ \varphi_{s,u}$ whenever $0 \leq s \leq u \leq t < +\infty$;
\item[(EF3)] for each $z \in \mathbb D$ there exists a non-negative locally integrable function $k_z$ on $[0, \, +\infty)$ such that 
$|\varphi_{s,u}(z) - \varphi_{s,t}(z)| \leq \int_u^t k_z (\xi) d\xi$ whenever $0 \leq s \leq u \leq t < +\infty$.
\end{enumerate}
\end{definition}

For all $0 \leq s  \leq t < +\infty$, $\varphi_{s,t}$ is univalent in $\mathbb D$, which follows from the uniqueness of the solution of \eqref{ODE2} (see \cite[Corollary 6.3]{BCDM12}) even though the definition does not require elements of an evolution family to be univalent.  The notion of a Loewner chain in the same framework as an evolution family can be given as follows. 
\begin{definition}[{\cite{CDMG10}}]
A family $(f_t)_{t \geq 0} \subset {\rm Hol}(\mathbb D,\,\mathbb C)$ is called a Loewner chain if it satisfies  the following conditions:
\begin{enumerate}
\item[(LC1)] each function $f_t$ is univalent for all $t \geq 0$;
\item[(LC2)] $f_s(\mathbb D) \subset f_t(\mathbb D)$ whenever $0 \leq s \leq t < +\infty$;
\item[(LC3)] for any compact subset $K \subset \mathbb D$ and any $T > 0$ there exists a non-negative integrable function $k_{K,T}$ on $[0, \, T]$ such that 
$|f_s(z) - f_t(z)| \leq \int_s^t k_{K,T} (\xi) d\xi$ whenever $z \in K$ and $0 \leq s  \leq t \leq T$.
\end{enumerate}
\end{definition}

\begin{remark}
In \cite{BCDM12} and \cite{CDMG10}, the definitions of evolution families and Loewner chains contain an integrability order $d \in [1, +\infty]$. We only need to consider the most general case of order $d = 1$.
\end{remark}

For a given Loewner chain $(f_t)$, the equation $\varphi_{s,t} := f_t^{-1}\circ f_s$ defines an evolution family. Differentiating both sides with respect to $t$ yields that $f_t' (\varphi_{s,t}) \dot{\varphi}_{s,t} + \dot{f}_t(\varphi_{s,t}) = 0$ where $f_t' (z) := d f_t(z)/dz$, $\dot f_t := d f_t/dt$, and $\dot{\varphi}_{s,t} := d \varphi_{s,t}/dt$. Combined with \eqref{ODE2}, this yields the generalized Loewner--Kufarev PDE: 
\begin{equation}\label{PDE2}
\dot{f}_t (z) = - f_t'(z) G(z, t)
\end{equation}
for all $z \in \mathbb D$ and almost every $t \geq 0$. 
Conversely, given an evolution family $(\varphi_{s,t})$, or equivalently, a Herglotz vector field $G$, one can obtain the corresponding Loewner chain $(f_t)$ by \eqref{PDE2}, which turns out to be unique up to the post-composition of a conformal mapping of 
the domain $\bigcup_{t\geq 0} f_t(\mathbb D)$. Moreover, the so-called standard Loewner chain defined in \cite{CDMG10} is determined uniquely, which is a Loewner chain $(f_t)$ such that $f_0(0) = f_0'(0) - 1 = 0$ 
(notice that only $f_0$ is normalized) and $\bigcup_{t\geq 0} f_t(\mathbb D)$
is either $\mathbb C$ or a disk centered at the origin. 

Overall, we have seen the one-to-one correspondences among evolution families $(\varphi_{s,t})$,  Herglotz vector fields $G$,  
couples $(p, \tau)$ of Herglotz functions $p$ and measurable functions $\tau$,
and  Loewner chains $(f_t)$ up to conformal mappings. In particular, the relationship between $(\varphi_{s,t})$ and $(p,\tau)$ is expressed by 
\begin{equation}\label{ODE3}
\frac{d\varphi_{s,t}(z)}{dt} = (\tau(t) - \varphi_{s,t}(z))(1 - \overline{\tau(t)}\varphi_{s,t}(z)) p(\varphi_{s,t}(z), t),  \quad \varphi_{s,s}(z) = z
\end{equation}
for all $z \in \mathbb D$ and almost every $t \geq s \geq 0$. Moreover, the relationship between $(f_t)$ and $(p,\tau)$ is expressed by 
\begin{equation}\label{PDE3}
\dot{f}_t (z) = - f_t'(z) (\tau(t) - z)(1 - \overline{\tau(t)}z) p(z, t)
\end{equation}
for all $z \in \mathbb D$ and almost every $t \geq 0$. We point out that if $\tau(t) \equiv {\rm const}$, then such a constant is 
the Denjoy--Wolff point of $\varphi_{s,t}$ for all 
$0 \leq s \leq t < +\infty$ different from ${\rm id}_{\mathbb D}$ (see \cite[Corollary 7.2]{BCDM12}).
This is the same as in the Berkson--Porta representation for continuous one-parameter semigroups.

We end this subsection with a brief introduction of the Denjoy--Wolff point mentioned above.
The Julia--Wolff--Carath\'eodory theorem (see \cite[Theorem 1.10]{ES}) asserts that for any $\varphi\in {\rm Hol}(\mathbb D)$ different from ${\rm id}_{\mathbb D}$ there exists a unique fixed point $\tau \in \overline{\mathbb D}$ such that $\varphi(\tau) = \tau$ and $|\varphi'(\tau)| \leq 1$. Such $\tau$ is called the Denjoy--Wolff point of $\varphi$. In the case of $\tau \in \partial \mathbb D$, 
these should be understood as an angular limit and an angular derivative denoted by using $\angle\lim$.
Namely, $\varphi(\tau) = \tau$ means $\angle \lim_{z \to \tau} \varphi(z) = \tau$, and this condition in fact
implies that 
$\varphi'(\tau) := \angle\lim_{z \to \tau} (\varphi(z) - \tau)/(z - \tau)$ exists in $(0,\, +\infty]$ (see \cite[Proposition 4.13]{Pom}). 
Thus, if $\tau \in \partial\mathbb D$ is the Denjoy--Wolff point of $\varphi$ then $0 < \varphi'(\tau) \leq 1$. 
In the case of $\tau \in \mathbb D$, the assertion $|\varphi'(\tau)| \leq 1$ follows from $\varphi(\tau) = \tau$ by the Schwarz--Pick lemma.  

The Denjoy--Wolff theorem asserts that, excluding a trivial exception where $\varphi$ is ${\rm id}_{\mathbb D}$ or an elliptic automorphism of $\mathbb D$, the sequence $\{\varphi_n\}$ of iterates of $\varphi$, defined as $\varphi_n := \varphi \circ \varphi_{n-1}$, $\varphi_1 = \varphi$, converges to the Denjoy--Wolff point $\tau$ uniformly on compact subsets of $\mathbb D$. This is a prototype of hyperbolic dynamics and contributed much to modern studies on dynamical systems (see \cite[Appendix G]{ALR}).

\subsection{(Classical) radial Loewner theory}
Hereafter, we consider the case that $\tau(t)$ is a constant function, and thus it is the common interior Denjoy--Wolff point $\tau$
of the evolution family $(\varphi_{s,t})$ in $\mathbb D$. When $\tau \in \mathbb D$,
we can assume that 
$\tau$ is $0$ by  the conjugation 
of an automorphism of $\mathbb D$ sending $\tau$ to $0$. 

\begin{definition}
An evolution family $(\varphi_{s,t})$ in $\mathbb D$ is said to be of radial type, if for all $0 \leq s \leq t < +\infty$,  
all elements $\varphi_{s,t}$'s different from ${\rm id}_{\mathbb D}$ share the common interior Denjoy--Wolff point at $0$.  
A Loewner chain $(f_t)$ in $\mathbb D$ is said to be of radial type if the corresponding evolution family $(\varphi_{s,t}) := (f_t^{-1}\circ f_s)$ is of radial type. 
\end{definition}

In this case, we see by the equation $\varphi_{s,t} = f_t^{-1}\circ f_s$ 
that a radial Loewner chain $(f_t)$ is actually a Loewner chain such that $f_t(0) = f_0(0)$ for all $t \geq 0$, and moreover,   
the Herglotz vector field is $G(z, t) = - z p(z, t)$ for all $z \in \mathbb D$ and almost all $t \geq 0$.  
In view of this, any radial evolution family $(\varphi_{s,t})$ and the corresponding Loewner chain $(f_t)$ satisfy the following radial Loewner--Kufarev ODE and PDE, respectively: 
\begin{equation}\label{ODE4}
\frac{d\varphi_{s,t}(z)}{dt} =  - \varphi_{s,t}(z) p(\varphi_{s,t}(z), t),  \quad \varphi_{s,s}(z) = z
\end{equation}
for all $z \in \mathbb D$ and almost every $t \geq s \geq 0$,  and  
\begin{equation}\label{PDE4}
\dot{f}_t (z) =  z p(z, t) f_t'(z)
\end{equation}
for all $z \in \mathbb D$ and almost every $t \geq 0$ with some Herglotz function $p$ determined uniquely up to a set of measure zero on the $t$-axis. 

In modern literature, the so-called classical radial evolution families and Loewner chains mean a special kind of evolution families and Loewner chains considered by Pommerenke (see \cite[Chapter 6]{Pom}). 

\begin{definition}
A radial evolution family and a radial Loewner chain are said to be classical if they satisfy extra normalization hypotheses 
$\varphi_{s,t}'(0) = e^{s-t}$ and $f_t(0) = 0$, $f_t'(0) = e^t$ for all $0 \leq s \leq t < +\infty$, respectively.
\end{definition}

Accordingly, the Herglotz function $p$ satisfies the extra condition $p(0, t) = 1$ for almost every $t \geq 0$. Moreover, the condition $f_t'(0) = e^t$ for all $t \geq 0$ ensures $\bigcup_{t\geq 0} f_t(\mathbb D) = \mathbb C$, which gives the one-to-one correspondence between the classical radial evolution family $(\varphi_{s,t})$ and the classical radial Loewner chain $(f_t)$ that is standard. 
Moreover, $(f_t)$ can be recovered from $(\varphi_{s,t})$ by the equation $f_s = \lim_{t \to +\infty}e^t\varphi_{s,t}$.

\subsection{Chordal Loewner theory}

Consider an evolution family $(\varphi_{s,t})$  in $\mathbb D$ such that for all $0 \leq s \leq t $,  
all elements $\varphi_{s,t}$'s different from ${\rm id}_{\mathbb D}$ 
share the common boundary Denjoy--Wolff point at $\tau$. 
As before, we can assume that 
$\tau$ is $1$ by the conjugation 
of a rotation of $\mathbb D$. 
Obviously, the associated Herglotz vector field $G(z, t)$ has the form $(1 - z)^2 p(z, t)$ for all $z \in \mathbb D$ and almost all  $t \geq 0$ with some Herglotz function $p$.  

In the literature, the case of boundary fixed points is usually treated in the half-plane model
instead of the unit disk model because there the associated vector field assumes a simpler form (see \eqref{ODE5} below).
Passing from the unit disk $\mathbb D$ to the right half-plane $\mathbb H$ by the Cayley transform $\zeta = H(z) = (1 + z)/(1 - z)$ that sends $1$ to $\infty$, we define a family $\tilde\varphi_{s,t} := H \circ \varphi_{s,t} \circ H^{-1} \in {\rm Hol}(\mathbb H)$ and 
assume the point $\infty$  to be the Denjoy--Wolff point of $\tilde\varphi_{s,t}$.  Then, the solution $\varphi_{s, t}(z)$ of 
the differential equation $dw/dt = (1 - w)^2 p(w, t)$ transforms into the solution $\tilde\varphi_{s, t}(\zeta)$ of 
$dw/dt = \tilde{p}(w, t)$ where $\tilde{p} (\zeta,t) = 2p(H^{-1}(\zeta),\, t)$. 

We say that $(\tilde\varphi_{s,t})$ is an evolution family in $\mathbb H$ if $(\varphi_{s,t})$ is an evolution family in $\mathbb D$. Similarly, we say that
$(\tilde f_{t}) := (H\circ f_t \circ H^{-1})$ is a Loewner chain in $\mathbb H$ if $(f_{t})$ is a Loewner chain in $\mathbb D$; and $\tilde p(\cdot, t) := p(H^{-1}(\cdot), t)$ is a Herglotz function in $\mathbb H$ if $p(\cdot, t)$ is a Herglotz function in $\mathbb D$.

\begin{definition}
An evolution family $(\varphi_{s,t})$ in $\mathbb H$ is said to be of chordal type, if for all $0 \leq s \leq t < \infty$,  all elements $\varphi_{s,t}$'s different from ${\rm id}_{\mathbb H}$ share the common boundary Denjoy--Wolff point at $\infty$.  A Loewner chain $(f_t)$ in $\mathbb H$ is said to be of chordal type if the corresponding evolution family $(\varphi_{s,t}) := (f_t^{-1}\circ f_s)$ is of chordal type. 
\end{definition}

We remark that under these circumstances the half-plane version of the Julia--Wolff--Carath\'eodory theorem
implies that each $\varphi_{s,t}$ satisfies 
$$
{\rm Re}\, \varphi_{s,t} (z) \geq \varphi_{s,t}'(\infty) {\rm Re}\, z
$$
for all $z \in \mathbb H$, where $\varphi_{s,t}'(\infty)$ is the Carath\'eodory angular derivative of $(\varphi_{s,t})$ defined by
$$
\varphi_{s,t}'(\infty) := \angle\lim_{z \to \infty} \frac{\varphi_{s,t} (z)}{z} = \frac{1}{(H^{-1}\circ\varphi_{s,t}\circ H)'(1)} \geq 1.
$$

Based on the above arguments, any chordal evolution family $(\varphi_{s,t})$ and the corresponding Loewner chain $(f_t)$ satisfy the following chordal Loewner--Kufarev ODE and PDE, respectively: 
\begin{equation}\label{ODE5}
\frac{d\varphi_{s,t}(z)}{dt} =  p(\varphi_{s,t}(z), t),  \quad \varphi_{s,s}(z) = z
\end{equation}
for all $z \in \mathbb H$ and almost every $t \geq s \geq 0$,  and  
\begin{equation}\label{PDE5}
\dot{f}_t (z) = -  p(z, t) f_t'(z)
\end{equation}
for all $z \in \mathbb H$ and almost every $t \geq 0$ with some Herglotz function $p$ determined uniquely up to a set of measure zero on the $t$-axis. We say that $(\varphi_{s,t})$ and $(f_t)$ are the evolution family and the Loewner chain associated with the Herglotz function $p$, respectively.

\section{Boundary condition of the Schwarzian derivative}\label{vanishnorm}

In this section, we consider the situation where the norm of the Schwarzian derivative of
a univalent analytic function $g$ on the half-plane $\mathbb H$ vanishes at the boundary.
Differently from the case of $\mathbb D$, this does not necessarily imply that $f$ is quasiconformally extendible
even if $f(\mathbb H)$ is a Jordan domain. We show this fact. We also prove that
the vanishing condition of the norm of the Schwarzian derivative is equivalent to that of the pre-Schwarzian derivative.

We start with the following elementary but crucial observation for the arguments of this section. 

\begin{lemma}\label{ratio}
Let $D := D(1, 1)$ be an open disk in $\mathbb H$ with center $1$ and radius $1$. For any $\epsilon > 0$, there exists a horodisk 
$D' \;(\subset D)$ tangent to $\partial D$ at $0$ such that $\rho_D^{-1}(z)/({\rm Re}\, z) \leq \epsilon$ for all $z \in D\backslash D'$. 
\end{lemma}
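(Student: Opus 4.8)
The plan is to make the estimate entirely explicit by computing the hyperbolic density $\rho_D$ on $D=D(1,1)$ in closed form, after which the statement reduces to recognizing an elementary inequality as the exterior of a disk.

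\textbf{Step 1 (explicit density).} I would use the M\"obius map $\varphi(z)=\dfrac{2z}{z+1}$, which carries $\mathbb H$ conformally onto $D$ with $\varphi(0)=0$ and $\varphi'(z)=\dfrac{2}{(z+1)^2}$. Inserting this into the normalization $\rho_D(\varphi(z))\,|\varphi'(z)|=\dfrac{1}{2\,{\rm Re}\,z}$ recalled in the introduction and substituting back $z=\varphi^{-1}(\zeta)=\dfrac{\zeta}{2-\zeta}$ gives, after a routine computation,
$$
\rho_D(\zeta)=\frac{1}{2\,{\rm Re}\,\zeta-|\zeta|^2}=\frac{1}{1-|\zeta-1|^2},\qquad \zeta\in D,
$$
the denominator being positive precisely because $|\zeta-1|<1$ on $D$. (Equivalently, one may read this off from $D=\{1+w:|w|<1\}$ and the curvature $-4$ density $1/(1-|w|^2)$ on $\mathbb D$.)

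\textbf{Step 2 (reduction to a disk).} Since $D\subset\mathbb H$ we have ${\rm Re}\,\zeta>0$ on $D$, and Step 1 yields
$$
\frac{\rho_D^{-1}(\zeta)}{{\rm Re}\,\zeta}=\frac{2\,{\rm Re}\,\zeta-|\zeta|^2}{{\rm Re}\,\zeta}=2-\frac{|\zeta|^2}{{\rm Re}\,\zeta}.
$$
If $\epsilon\ge 2$ this quantity is automatically $\le 2\le\epsilon$ on all of $D$, so any horodisk at $0$ serves as $D'$; hence assume $0<\epsilon<2$. Then $\rho_D^{-1}(\zeta)/{\rm Re}\,\zeta\le\epsilon$ is equivalent to $|\zeta|^2\ge(2-\epsilon)\,{\rm Re}\,\zeta$, that is, to
$$
\Bigl|\zeta-\tfrac{2-\epsilon}{2}\Bigr|^2\ge\Bigl(\tfrac{2-\epsilon}{2}\Bigr)^2,
$$
which says exactly that $\zeta$ lies outside the open disk $D':=D\!\left(\tfrac{2-\epsilon}{2},\tfrac{2-\epsilon}{2}\right)$. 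Thus the claimed inequality holds for every $\zeta\in D\setminus D'$.

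\textbf{Step 3 (geometry of $D'$).} It remains to check that $D'$ has the required shape. The circles $\partial D'$ and $\partial D$ both have centres on the positive real axis; the distance between the centres is $1-\tfrac{2-\epsilon}{2}=\tfrac{\epsilon}{2}$, and this coincides with the difference of the radii $1-\tfrac{2-\epsilon}{2}$. Hence the two circles are internally tangent, so $D'\subset D$ and the point of tangency, lying on the real axis, is $0$. Therefore $D'$ is a horodisk in $D$ tangent to $\partial D$ at $0$, which completes the argument. There is no genuinely hard step here: the lemma is elementary, and the only points demanding care are the bookkeeping in the conformal change of variables of Step 1, the open/closed distinction for $D'$, and the degenerate range $\epsilon\ge 2$, which is why I treat it separately.
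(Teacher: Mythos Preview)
Your proof is correct, and in fact cleaner than the paper's. The paper does not compute $\rho_D$ explicitly; instead it invokes the general two-sided comparison $\rho_D^{-1}(z)\le 4\,d(z,\partial D)$, reduces the problem to the level sets $\{d(z,\partial D)/{\rm Re}\,z = 1/a\}$, identifies these as ellipses by a direct calculation, encloses each ellipse in a horodisk $D_a$ tangent at $0$, and finally sets $a=4/\epsilon$. Your route exploits the fact that $D$ is itself a disk: the density is known in closed form, and the identity $\rho_D^{-1}(\zeta)/{\rm Re}\,\zeta = 2-|\zeta|^2/{\rm Re}\,\zeta$ shows that the sublevel sets are \emph{exactly} complements of horodisks, with no ellipse step and no loss of a constant. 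So you obtain the optimal $D'=D\!\left(\tfrac{2-\epsilon}{2},\tfrac{2-\epsilon}{2}\right)$, whereas the paper's $D_{4/\epsilon}$ is larger. The trade-off is that the paper's argument, going through $d(z,\partial D)$, would transplant to more general domains, while yours is tailored to the specific disk $D(1,1)$; for the purposes of this lemma your approach is both simpler and sharper.
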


\begin{proof}
The hyperbolic density and the distance to the boundary
satisfy the estimates $(4\rho_D(z))^{-1} \leq d(z, \partial D) \leq \rho_D^{-1}(z)$ (see \cite[p.92]{Pom}). 
Then, we consider the ratio $d(z, \partial D)/({\rm Re}z)$ instead of $\rho_D^{-1}(z)/({\rm Re}\, z)$.

For a given constant $a > 1$, we define a curve
$$
\Gamma_a := \left\{z \in D: \; \frac{d(z, \partial D)}{{\rm Re} z}  = \frac{1}{a}\right\}.
$$
By computation, $\Gamma_a$ is an ellipse whose points $z=x+iy$ hold an equation
$$
\frac{\left(x - a/(a+1)\right)^2}{\left( a/(a+1) \right)^2} + \frac{y^2}{(a-1)/(a+1)} = 1.
$$
The ellipse $\Gamma_a$ becomes larger as $a$ increases and tends to $\partial D$ as $a \to \infty$.
Moreover, $\Gamma_a$ is contained in $\overline{D_a}$, where
\begin{equation*}
D_{a} := \left\{z \in D: \; \left|z - a/(a+1)\right| < a/(a+1) \right\}
\end{equation*}
is a horodisk tangent to $\partial D$ at $0$.
Then, we see that 
$d(z, \partial D)/({\rm Re}z) \leq 1/a$ for all $z \in D\backslash D_{a}$.
Consequently, by taking $a = 4/\epsilon$ and setting $D' = D_{4/\epsilon}$, 
we have that $\rho_D^{-1}(z)/({\rm Re} z) \leq \epsilon$ for all $z \in D\backslash D'$. 
\end{proof}

This lemma makes it possible to construct conformal mappings on $\mathbb H$ easily that
satisfy the vanishing condition of their Schwarzian derivatives at the boundary. Here is a general manner for this.

\begin{proposition}\label{weak}
Let $g(\zeta)$ be a conformal mapping on $\mathbb H$.  
Let $D = D(1, 1)$ and let $\zeta = \varphi (z) = 2/(z+1)$ be a M\"obius transformation that maps $\mathbb H$ onto $D$ with $\varphi (\infty) = 0$. Then, the conformal mapping $f := g\circ \varphi$ on $\mathbb H$ satisfies 
\begin{equation}\label{Slimsup}
\lim_{{\rm Re}z \to 0^+} (2{\rm Re}z)^2 |Sf(z)| = 0.
\end{equation}
\end{proposition}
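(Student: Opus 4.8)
The plan is to exploit the composition formula for the Schwarzian derivative together with the geometric fact supplied by Lemma~\ref{ratio}. Recall that for $f = g\circ\varphi$ one has the chain rule $Sf(z) = (Sg)(\varphi(z))\,\varphi'(z)^2 + S\varphi(z)$. Since $\varphi$ is a M\"obius transformation, $S\varphi \equiv 0$, so in fact $Sf(z) = (Sg)(\varphi(z))\,\varphi'(z)^2$. This is the starting point. The next step is to rewrite the quantity $(2{\rm Re}\,z)^2|Sf(z)|$ in a conformally invariant-looking way. Using that $\rho_{\mathbb H}(z) = 1/(2{\rm Re}\,z)$ and the transformation rule $\rho_D(\varphi(z))|\varphi'(z)| = \rho_{\mathbb H}(z)$ recalled in the introduction, I would write
\begin{equation*}
(2{\rm Re}\,z)^2|Sf(z)| = \rho_{\mathbb H}^{-2}(z)\,|Sg(\varphi(z))|\,|\varphi'(z)|^2 = \rho_D^{-2}(\varphi(z))\,|Sg(\varphi(z))|.
\end{equation*}
So the claim reduces to showing that $\rho_D^{-2}(w)\,|Sg(w)| \to 0$ as $w = \varphi(z) \to 0$ within $D$ (equivalently, as ${\rm Re}\,z \to 0^+$, since $\varphi$ maps $i\mathbb R \cup \{\infty\}$ to $\partial D$ and in particular the prime end at $\infty$ to $0$).

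The heart of the argument is then a boundedness estimate for $\rho_D^{-2}|Sg|$ combined with the horodisk-escape behavior of Lemma~\ref{ratio}. Since $g$ is univalent (conformal) on $\mathbb H$, which is conformally $D$, the classical Kraus--Nehari bound gives $\frac12\rho_D^{-2}(w)|Sg(w)| \le 3$ for all $w \in D$; call this bound $M$. Now fix $\epsilon > 0$. By Lemma~\ref{ratio} applied on $D = D(1,1)$, there is a horodisk $D' \subset D$ tangent to $\partial D$ at $0$ with $\rho_D^{-1}(w)/{\rm Re}\,w \le \delta$ on $D \setminus D'$, for $\delta$ to be chosen. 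I want to conclude $\rho_D^{-2}(w)|Sg(w)| \to 0$, but the Kraus--Nehari bound alone only gives boundedness, not decay. The fix is to observe that the quantity we actually control is $\rho_D^{-2}(w)|Sg(w)|$, and I should instead bound it by $C\,(\rho_D^{-1}(w)/{\rm Re}\,w)$ times something bounded. Concretely: write $\rho_D^{-2}(w)|Sg(w)| = \big(\rho_D^{-1}(w)/{\rm Re}\,w\big)\cdot\big(({\rm Re}\,w)\rho_D^{-1}(w)|Sg(w)|\big)$, and note that $({\rm Re}\,w)\rho_D^{-1}(w)|Sg(w)| \le ({\rm Re}\,w)\cdot 2M\rho_D(w) \le 2M$ because ${\rm Re}\,w \le \rho_D^{-1}(w)$ is false in general — so this particular split does not work, and I will instead use that ${\rm Re}\,w \le 1$ on $D$, giving $\rho_D^{-2}(w)|Sg(w)| \le 2M \le 6$, which is still only boundedness.

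Hence the real mechanism must be different, and this is where I expect the main obstacle to lie: \emph{the decay does not come from $g$ being merely conformal, but must be built into the statement via the point $0$ being a single accessible prime end}. The correct approach, I believe, is the following. Since as $z \to \infty$ in $\mathbb H$ the image $w = \varphi(z) \to 0$ inside a horodisk-complement region only if $z$ stays in a horoball-type region — and actually the point is that $z \to \infty$ \emph{within} $\mathbb H$ corresponds precisely to $w \to 0$ \emph{within} $D$ along paths that eventually leave every horodisk $D'$ tangent at $0$, because the M\"obius map $\varphi(z) = 2/(z+1)$ carries the region $\{{\rm Re}\,z > 0\}$ near $\infty$ onto a region near $0$ in $D$ that is comparable to $D \setminus D'$ for suitable $D'$; more precisely, ${\rm Re}\,z \to 0^+$ forces $|w| = |\varphi(z)| = 2/|z+1|$ small with ${\rm Re}\,w$ not too small relative to $|w|$. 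So I would: (i) verify by direct computation that the condition ${\rm Re}\,z \to 0^+$ translates to $w \to 0$ with $w \in D \setminus D_a'$ for every fixed horodisk $D_a'$ eventually; (ii) combine this with Lemma~\ref{ratio}, which says $\rho_D^{-1}(w)/{\rm Re}\,w \le \epsilon$ on $D \setminus D_a'$; (iii) bound $\rho_D^{-2}(w)|Sg(w)| = \big(\rho_D^{-1}(w)/{\rm Re}\,w\big)^2 ({\rm Re}\,w)^2|Sg(w)|$ and use that $({\rm Re}\,w)^2|Sg(w)| \le (2{\rm Re}\,w)^2 \cdot \tfrac12\rho_D^{-2}(w)|Sg(w)|\rho_D^2(w)({\rm Re}\,w)^{-2}$... which circles back. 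I therefore expect that the cleanest route is step (i)–(ii) plus the plain Kraus--Nehari bound $\tfrac12\rho_D^{-2}|Sg| \le 3$ used as follows: $(2{\rm Re}\,z)^2|Sf(z)| = \rho_D^{-2}(w)|Sg(w)| = \big(\rho_D^{-1}(w)/{\rm Re}\,w\big)^2\,({\rm Re}\,w)^2|Sg(w)|$, and one bounds $({\rm Re}\,w)^2 |Sg(w)| \le \rho_D^{-2}(w)|Sg(w)| \le 6$ using ${\rm Re}\,w \le \rho_D^{-1}(w)$ (which \emph{does} hold, since $\rho_D^{-1}(w) \ge d(w,\partial D) \ge {\rm Re}\,w$ because $i\mathbb R \subset \partial D$... again this needs $0 \in \partial D$ and the geometry checked). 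Granting that, $(2{\rm Re}\,z)^2|Sf(z)| \le 6\,\epsilon^2$ once $z$ is close enough to $i\mathbb R$, and letting $\epsilon \to 0$ finishes the proof. The one genuinely delicate point, then, is the geometric translation in step (i): showing that ${\rm Re}\,z \to 0^+$ forces $\varphi(z)$ to escape every horodisk tangent to $\partial D$ at $0$, which I would handle by an explicit computation of $d(\varphi(z),\partial D)/{\rm Re}\,\varphi(z)$ in terms of $z$ and checking it tends to $0$.
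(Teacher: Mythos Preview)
Your final decomposition is exactly the paper's proof: writing $(2{\rm Re}\,z)^2|Sf(z)|=\rho_D^{-2}(w)|Sg(w)|$ for $w=\varphi(z)$, splitting this as $\bigl(\rho_D^{-1}(w)/{\rm Re}\,w\bigr)^2\cdot({\rm Re}\,w)^2|Sg(w)|$, bounding the first factor by Lemma~\ref{ratio}, and bounding the second by a universal constant. But your justification of that second bound is wrong. You argue $({\rm Re}\,w)^2|Sg(w)|\le\rho_D^{-2}(w)|Sg(w)|\le 6$ via ``${\rm Re}\,w\le\rho_D^{-1}(w)$ since $\rho_D^{-1}(w)\ge d(w,\partial D)\ge{\rm Re}\,w$ because $i\mathbb R\subset\partial D$''. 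This fails: $i\mathbb R$ is not contained in $\partial D$ (only tangent at $0$), and $d(w,\partial D)\ge{\rm Re}\,w$ is false on most of $D$ (take $w=3/2$). The correct and much simpler reason---the one the paper uses---is that $g$ is univalent on the \emph{full} half-plane $\mathbb H\supset D$, so the Nehari bound applied on $\mathbb H$ gives $(2{\rm Re}\,w)^2|Sg(w)|\le 6$ directly, hence $({\rm Re}\,w)^2|Sg(w)|\le 3/2$.

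Your ``delicate'' geometric step is also mis-stated: ${\rm Re}\,z\to 0^+$ does \emph{not} force $w=\varphi(z)\to 0$ (e.g.\ $z=c\to 0^+$ on the real axis gives $w\to 2$). What is true---and all you need---is that the M\"obius map $\varphi$ sends each half-plane $\{{\rm Re}\,z>c\}$ onto a horodisk $D'\subset D$ tangent to $\partial D$ at $0$, so the strip $\{0<{\rm Re}\,z\le c\}$ corresponds precisely to $D\setminus D'$. The paper puts this as: $H_\epsilon:=\varphi^{-1}(D')$ is a half-plane in $\mathbb H$, so $(2{\rm Re}\,z)^2|Sf(z)|\le\epsilon$ on $\mathbb H\setminus H_\epsilon$. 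No explicit computation of $d(\varphi(z),\partial D)/{\rm Re}\,\varphi(z)$ is required.
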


\begin{proof}
Since $g$ is univalent on $\mathbb H$, we have $(2{\rm Re}\,\zeta)^2 |Sg(\zeta)| \leq 6$ for all $\zeta \in \mathbb H$ (see \cite[p. 60]{Le}), and then we have that for all $\zeta \in D$,
\begin{equation*}
\begin{split}
\rho_D^{-2}(\zeta) |Sg(\zeta)| &= (2{\rm Re}\,\zeta)^2 |Sg(\zeta)| \frac{\rho_D^{-2}(\zeta)}{(2{\rm Re}\,\zeta)^2} 
 \leq \frac{3}{2} \left(\frac{\rho_D^{-1}(\zeta)}{{\rm Re}\,\zeta}\right)^2.
\end{split}
\end{equation*}
According to Lemma \ref{ratio}, for any $\epsilon > 0$, there exists a horodisk $D' \; (\subset D)$ tangent to 
$\partial D$ at $0$ such that $\rho_D^{-2}(\zeta) |Sg(\zeta)| \leq \epsilon$ for all $\zeta \in D\backslash D'$. 

Set $H_\epsilon = \varphi^{-1}(D')$, which is a half-plane in $\mathbb H$. As
$$
(2{\rm Re}z)^2 |Sf(z)| = \rho_D^{-2}(\zeta) |Sg(\zeta)|
$$
for all $z \in \mathbb H$ by the invariance under the M\"obius transformation $\varphi$, 
we conclude that for all $z \in \mathbb H\backslash H_\epsilon$, 
$(2{\rm Re}z)^2 |Sf(z)| \leq \epsilon$. 
\end{proof}

As a particular application of this construction, we obtain a counter-example to the statement of Theorem \ref{main1}
when we drop the extra assumptions $\bf (A)$ and $\bf (B)$.

\begin{corollary}\label{example}
There exists a conformal mapping $f$ on $\mathbb H$ satisfying both
\begin{enumerate}
\item[(i)] the image $f(\mathbb H)$ is a Jordan domain, but not a quasidisk; and
\item[(ii)] $\lim_{{\rm Re} z \to 0^+}(2{\rm Re}z)^2 |Sf(z)|  = 0$, that is, \eqref{Slimsup}.
\end{enumerate}
\end{corollary}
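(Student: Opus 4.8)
The plan is to produce the example in the shape dictated by Proposition~\ref{weak}: choose a conformal map $g$ on $\mathbb H$, put $f := g\circ\varphi$ with $\varphi(z) = 2/(z+1)$, and note that condition (ii) then holds for free, while $f(\mathbb H) = g(\varphi(\mathbb H)) = g(D)$. Thus the whole problem reduces to choosing $g$ so that $g(D)$ is a Jordan domain that is \emph{not} a quasidisk. Since $D = D(1,1)$ is internally tangent to $i\mathbb R = \partial\mathbb H$ at the single point $0$, and $g$ is conformal on all of $\mathbb H$ (hence real-analytic along $\partial D\setminus\{0\}$), the curve $g(\partial D)$ can be singular only at $g(0)$; the idea is to arrange a cusp precisely there.

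A concrete choice that keeps the bookkeeping clean is
\[
g(z) = \left(\frac{z}{z+1}\right)^{2} = w\circ m(z),\qquad
m(z) = \frac{z-1}{z+1},\quad w(\zeta) = \frac{1}{4}(1+\zeta)^{2},
\]
where $m$ maps $\mathbb H$ conformally onto $\mathbb D$ with $m(0) = -1$, and $w$ maps $\mathbb D$ conformally onto the interior $\Omega$ of the cardioid $r = \tfrac{1}{2}(1+\cos\theta)$, whose cusp lies at $w(-1) = 0$. It is elementary that $w$, and hence $g$, is univalent and that $\partial\Omega$ is a Jordan curve, so $w$ (hence $g$) extends to a homeomorphism of the closure; in particular $g(D)$ is a Jordan domain. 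A short computation then gives
\[
f(z) = g\circ\varphi(z) = \frac{4}{(z+3)^{2}},
\]
and, setting $\zeta = 2/(z+3)$, one finds $f(\mathbb H) = \{\,\zeta^{2} : |\zeta - \tfrac{1}{3}| < \tfrac{1}{3}\,\}$, i.e.\ the image under the squaring map of a round disk internally tangent to $i\mathbb R$ at $0$. Condition (ii) is now immediate from Proposition~\ref{weak}.

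It remains to check (i). Parametrizing the boundary circle as $\zeta = \tfrac{1}{3}(1 + e^{i\theta})$ and expanding $\zeta^{2}$ near $\theta = \pi$ shows that $\partial f(\mathbb H)$ near $0$ consists of the two arcs $v \approx \pm 3(-u)^{3/2}$ with $u < 0$, tangent to one another at $0$; equivalently, the complement of $f(\mathbb H)$ has a zero-angle cusp at $0$. Taking the boundary points $z_{\pm} = -\epsilon \pm 3\epsilon^{3/2} i$, the boundary sub-arc joining them through $0$ has diameter $\asymp \epsilon$, whereas $|z_{+} - z_{-}| = 6\epsilon^{3/2}$; letting $\epsilon \to 0^{+}$ violates the Ahlfors three-point condition, so $f(\mathbb H)$ is a Jordan domain that is not a quasidisk.

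The delicate point — and the reason conditions (i) and (ii) coexist — is the tangency of $D$ to $\partial\mathbb H$ exactly at the special point $0$: one must be sure that passing from $g(\mathbb H)$ (the full cardioid interior) to the sub-domain $g(D)$ does not smooth out the cusp at $g(0)$. The explicit formula for $f$ settles this directly; alternatively, $D$ contains a Euclidean sector at $0$ of every opening $< \pi$, so $g(D)$ still fills a full cusp neighbourhood of $g(0)$. Everything else — univalence of $g$, the Jordan-curve property, and the Möbius algebra producing $f(z) = 4/(z+3)^{2}$ — is routine.
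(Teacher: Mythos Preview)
Your proof is correct and follows the same strategy as the paper: invoke Proposition~\ref{weak} with a conformal map $g$ on $\mathbb H$ so that condition (ii) is automatic, and arrange $g$ so that $g(D)$ has a cusp at the image of the tangency point $0\in\partial D$. The paper chooses $g(\zeta)=-i\arcsin(i/\zeta)$, a Schwarz--Christoffel map onto a half-strip, and argues that $g(D)$ has a cusp at $\infty$; you instead take $g(z)=(z/(z+1))^{2}$, which yields the explicit rational function $f(z)=4/(z+3)^{2}$ whose image is (a scaled) cardioid with a cusp at the origin. Your choice has the advantage of producing a completely explicit $f$ and a bounded image whose failure of the Ahlfors three-point condition is verified by a one-line local expansion, whereas the paper's example is transcendental and places the singular boundary point at $\infty$. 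Both arguments hinge on the same observation that the internal tangency of $D$ to $\partial\mathbb H$ at $0$ lets the cusp of $g$ at $g(0)$ survive in $g(D)$; you make this explicit by computing $f(\mathbb H)$ directly, which is the cleanest way to close the argument.
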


\begin{proof}
Consider a half horizontal parallel strip $V$ in $\mathbb H$ with vertices $\pm i\pi/2$, that is, 
$$
V := \{w = u + iv \in \mathbb H: -\pi/2 < v < \pi/2 \}. 
$$
By the Schwarz--Christoffel formula,  
$$
w = g(\zeta) := -i \arcsin \left(\frac{i}{\zeta}\right) = -\log\left( -\frac{1}{\zeta} +\sqrt{1 + \frac{1}{\zeta^2}} \right)
$$
maps $\mathbb H$ conformally onto $V$ with 
$g(0) = \infty$, $g(i) = -i\pi/2$ and $g(-i) = i\pi/2$, where we choose the branch of $\arcsin (i/\zeta)$ so that 
it takes $\pi/2$ at $\zeta = i$. 
Following this, we see the image of $D = D(1, 1)$ under $g$ is a Jordan domain, but not a quasidisk since 
its boundary has a cusp at $\infty$. 

Let $\varphi$ be the M\"obius transformation as in Proposition \ref{weak}. 
We define 
$$
f(z) := g\circ \varphi(z) = -i\arcsin \left( i(z+1)/2 \right).
$$
Then, $f(\mathbb H)=g(D)$ satisfies condition (i). Moreover, we see that the conformal mapping $f$ satisfies condition (ii)
by Proposition \ref{weak}. 
\end{proof}

In the latter half of this section, we compare the boundary behavior of Schwarzian derivatives and pre-Schwarzian derivatives for 
univalent analytic functions on $\mathbb H$. This will be used in the proof of Theorem \ref{main1} later.

First, we state a claim below obtained from the Cauchy integral formula. As in \cite[Lemma 4]{GO},
if $h$ is an analytic function on a domain 
$D \subset \mathbb C$, then
\begin{equation}\label{Lemma4}
\sup_{z \in D} |h'(z)| d(z, \partial D)^2 \leq 4 \sup_{z \in D} |h(z)| d(z, \partial D).
\end{equation}
This yields the implication 
\begin{equation}\label{Lemma40}
\lim_{z \to \partial D} |h(z)| d(z, \partial D) = 0 \quad \Longrightarrow \quad \lim_{z \to \partial D} |h'(z)| d(z, \partial D)^2 = 0.
\end{equation} 
Indeed, by setting $r := d(z, \partial D)/2$ for a fixed $z \in D$, \eqref{Lemma40} easily follows from:
\begin{equation*}
\begin{split}
|h'(z)| &= \left| \frac{1}{2\pi i} \int_{|\zeta - z| = r} \frac{h(\zeta)}{(\zeta - z)^2} d\zeta \right| 
 \leq \frac{1}{r}\sup_{|\zeta - z| = r} |h(\zeta)|\\
&\leq  \frac{2}{d(z, \partial D)} \sup_{|\zeta - z| = r} \left( |h(\zeta)| d(\zeta, \partial D) \right) \sup_{|\zeta - z| = r}\frac{1}{d(\zeta, \partial D)}\\
& = \frac{4}{d(z, \partial D)^2}  \sup_{|\zeta - z| = r} \left( |h(\zeta)| d(\zeta, \partial D) \right). 
\end{split}
\end{equation*}

Suppose that $D$ is a simply connected domain with the hyperbolic density $\rho_D$.  
Since $\rho_D(z) \leq 1/d(z, \partial D) \leq 4\rho_D(z)$, it follows from \eqref{Lemma4} and \eqref{Lemma40} 
that 
$$
\sup_{z \in D} |h'(z)|\rho_D^{-2}(z) \leq 64 \sup_{z \in D} |h(z)|\rho_D^{-1}(z);
$$
$$
\lim_{z \to \partial D} |h(z)|\rho_D^{-1}(z) = 0 \quad \Longrightarrow \quad \lim_{z \to \partial D} |h'(z)|\rho_D^{-2}(z) = 0.
$$
For an analytic and locally univalent function $g$ on $D$,
we can apply these facts to the pre-Schwarzian derivative $Pg$ for $g$ and the Schwarzian derivative $Sg=(Pg)'-(Pg)^2/2$
to obtain the following implications:
\begin{equation}\label{sup}
\begin{split}
\sup_{z \in D} |Pg(z)|\rho_D^{-1}(z) =: \lambda \quad & \Longrightarrow \quad \sup_{z \in D} |Sg(z)|\rho_D^{-2}(z) \leq 64\lambda + \lambda^2/2;\\
\lim_{z \to \partial D} |Pg(z)|\rho_D^{-1}(z) = 0 \quad &\Longrightarrow \quad \lim_{z \to \partial D} |Sg(z)|\rho_D^{-2}(z) = 0. 
\end{split}
\end{equation}

Next, we consider the converse implication.
The following result is essentially contained in Becker \cite{Be87}.

\begin{proposition}\label{PSdisk}
Let $f$ be a univalent analytic function on $\mathbb D$ and let $f(\mathbb D)$ be a Jordan domain. 
Suppose further that $f$ satisfies condition \eqref{Slimsup1}.
For $t \in (0,1)$, set 
\begin{equation*}
\begin{split}
\hat\beta(t) &:= \sup_{1-t \leq |z| < 1} (1 - |z|^2) |Pf(z)|,  \\
\hat\sigma(t) &:= \sup_{1-t \leq |z| < 1} (1 - |z|^2)^2 |Sf(z)|.
\end{split}
\end{equation*}
Then, there is some $t_0 \in (0,1)$ such that
$$
\hat\beta (t^{1+\epsilon}) \leq 4\hat\sigma (t) + 8 t^{\epsilon}
$$
for $0 < t \leq t_0$. Here, $\epsilon>0$ can take an arbitrary positive constant.
\end{proposition}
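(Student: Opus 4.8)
The plan is to use the quasiconformal extension supplied by Becker's Theorem~\ref{Beck}---available here since \eqref{Slimsup1} holds with $k<1$ and $f(\mathbb D)$ is a Jordan domain---in its explicit Ahlfors--Weill form, and to read off the size of $(1-|z|^2)|Pf(z)|$ near $\partial\mathbb D$ from how far this extension deviates, at the reflected point, from the naive linear extrapolation of $f$.

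Write $\hat f$ for the quasiconformal homeomorphism of some $\{|z|<R\}$, $R>1$, extending $f$, conformal on $\mathbb D$, with complex dilatation on $\{1<|z|<R\}$ given by \eqref{Smu1}; equivalently, for $|z|>1$ and $z^{*}:=1/\bar z$,
\[
\hat f(z)=f(z^{*})+\frac{(z-z^{*})\,f'(z^{*})}{1-\tfrac12(z-z^{*})\,Pf(z^{*})},
\]
as one checks by computing $\mu_{\hat f}$ and matching with \eqref{Smu1}. Fix $a\in\mathbb D$ with $1-t^{1+\epsilon}\le|a|<1$, put $z_a:=1/\bar a$ and $\eta_a:=z_a-a=(1-|a|^2)/\bar a$, and set $D_a:=1-\tfrac12\eta_a Pf(a)$. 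Since $\hat f$ is injective, $\hat f(z_a)\neq f(a)$, so the formula rearranges to $D_a=\eta_a f'(a)/(\hat f(z_a)-f(a))$ and hence to the identity
\[
(1-|a|^2)Pf(a)=2\bar a\,(1-D_a)=2\bar a\cdot\frac{\hat f(z_a)-f(a)-(z_a-a)f'(a)}{\hat f(z_a)-f(a)}.
\]
Because $|a|<1$, it therefore suffices to bound the last fraction by $2\hat\sigma(t)+4t^{\epsilon}$, uniformly over such $a$ and for $t$ below some $t_0=t_0(\epsilon)$.

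For the denominator, $\hat f(z_a)$ lies in the exterior of $f(\mathbb D)$ (as $\hat f$ is a homeomorphism with $\hat f(\mathbb D)=f(\mathbb D)$), so by the Koebe one-quarter theorem $|\hat f(z_a)-f(a)|\ge d(f(a),\partial f(\mathbb D))\ge\tfrac14(1-|a|^2)|f'(a)|$. The crux is the numerator $N_a:=\hat f(z_a)-f(a)-(z_a-a)f'(a)$, for which I would prove $|N_a|\le\bigl(\tfrac12\hat\sigma(t)+t^{\epsilon}\bigr)(1-|a|^2)|f'(a)|$; two facts combine. First, the linear term is essentially the reflection of $f(a)$ in $\partial f(\mathbb D)$: with $a^{*}=a/|a|$, the vector $z_a-a$ points radially outward at $a$, so $(z_a-a)f'(a)$ points along the outward normal to $\partial f(\mathbb D)$ at $f(a^{*})$ and has length within a factor $1+O(\hat\sigma(t))$ of $2\,d(f(a),\partial f(\mathbb D))$. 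Second, $\hat f(z_a)$ is close to that same reflection: on the ball $B:=B(a,6t^{1+\epsilon})$, which straddles $\partial\mathbb D$ and contains $z_a$, the dilatation of $\hat f$ vanishes in $B\cap\mathbb D$ and equals $\tfrac12(1-|\zeta|^2)^2|Sf(\zeta)|\le\tfrac12\hat\sigma(t)$ in $B\setminus\overline{\mathbb D}$ (with $\zeta=1/\bar z$, noting $1-|\zeta|\le 6t^{1+\epsilon}\le t$ for $t$ small), so $\hat f|_B$ is $(1+O(\hat\sigma(t)))$-quasiconformal and conformal on $B\cap\mathbb D$; comparing $\hat f$ on $B$ with the composition of the exterior Riemann map $g\colon\{|z|>1\}\to\overline{\mathbb C}\setminus\overline{f(\mathbb D)}$ with a M\"obius approximation of the (quasisymmetric) conformal welding $w=g^{-1}\circ f$---which is close to a M\"obius transformation near $a^{*}$ at scale $t$ precisely because $\hat\sigma(t)$ is small---shows that $\hat f(z_a)$ agrees with the geometric reflection of $f(a)$ in $\partial f(\mathbb D)$ up to $O(\hat\sigma(t))\cdot(1-|a|^2)|f'(a)|$. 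Subtracting, the two reflections cancel and $|N_a|$ obeys the stated bound, the additive $t^{\epsilon}$ accounting for the gap between the scale $t$ at which $\hat\sigma$ is measured and the scale $t^{1+\epsilon}$ at which $a$ lies. With the denominator estimate this gives $(1-|a|^2)|Pf(a)|\le 4\hat\sigma(t)+8t^{\epsilon}$, hence $\hat\beta(t^{1+\epsilon})\le 4\hat\sigma(t)+8t^{\epsilon}$ after taking the supremum.

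The bound on $N_a$ is where all the work is, and it is genuinely geometric: it uses that near every boundary point $\partial f(\mathbb D)$ is a Jordan arc along which reflection is well approximated by reflection in a line, so that $\hat f$ near $\partial\mathbb D$ differs from that affine reflection only by the quasiconformal distortion $O(\hat\sigma(t))$ and controlled lower-order terms. Dropping the hypothesis that $f(\mathbb D)$ be a bounded Jordan domain destroys the conclusion---Corollary~\ref{example} exhibits a half-plane image whose Schwarzian norm vanishes at the boundary while its pre-Schwarzian norm does not---precisely because at the relevant boundary point $\infty$ the curve is not locally a Jordan arc of this kind. The remainder is bookkeeping: making the several approximations quantitative and uniform in $a$, and, if one prefers not to invoke Theorem~\ref{Beck} right at the endpoint scale, supplementing with a short continuity argument that starts from the universal bound $(1-|z|^2)|Pf(z)|\le 6$ valid for every univalent $f$ and feeds the above estimate back into itself.
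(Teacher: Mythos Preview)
Your starting point agrees with the paper's: invoke Theorem~\ref{Beck} to obtain the Ahlfors--Weill extension $\hat f$ with dilatation bounded near $\partial\mathbb D$ by $\tfrac12\hat\sigma(t)$. From there, however, the paper does not attempt any direct geometric estimate. It simply quotes \cite[Theorem~2]{Be87}, proved by Lehto's majorization principle, which gives
\[
\hat\beta(t^{1+\epsilon})\le 8\bigl(\hat k(t)+t^{\epsilon}\bigr),\qquad \hat k(t):=\Vert\mu\Vert_{L^\infty(\{1<|z|\le 1+t\})},
\]
and combines this with $\hat k(t)\le\tfrac12\hat\sigma(t)$. That is the entire argument.

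Your alternative route has a genuine gap at the step you yourself flag as ``where all the work is''. The identity
\[
(1-|a|^2)Pf(a)=2\bar a\cdot\frac{\hat f(z_a)-f(a)-(z_a-a)f'(a)}{\hat f(z_a)-f(a)}
\]
is an algebraic rearrangement of the Ahlfors--Weill formula and carries no information by itself; to extract a bound you must estimate numerator and denominator by \emph{independent} means. The Koebe bound on the denominator is fine. For the numerator $N_a$ you propose comparing both $\hat f(z_a)$ and the affine extrapolation $f(a)+(z_a-a)f'(a)$ to a ``geometric reflection of $f(a)$ in $\partial f(\mathbb D)$'', using that the conformal welding is ``close to a M\"obius transformation'' at scale $t$ because $\hat\sigma(t)$ is small. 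None of this is made quantitative: reflection in a Jordan curve is not defined without further regularity, and the assertion that smallness of $\hat\sigma(t)$ forces the welding to be nearly M\"obius at scale $t$, with error producing exactly the additive $t^{\epsilon}$, is precisely the content of a Lehto-type majorization estimate---which is what \cite{Be87} proves and the paper cites. In effect your outline proposes to re-derive that black box through heuristics, and as written it does not do so. (A minor point: your appeal to Corollary~\ref{example} is misplaced---that example lives on $\mathbb H$ and concerns behaviour at $\infty$, which has no analogue on $\mathbb D$; on the disk the Jordan hypothesis is needed to invoke Theorem~\ref{Beck}, not for the reason you give.)
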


\begin{proof}
Suppose \eqref{Slimsup1} holds. Then by Theorem \ref{Beck}, $f$ has a quasiconformal extension to 
$\mathbb C$ with complex dilatation $\mu$ satisfying 
\begin{equation}\label{half1}
\hat k(t) := \{\Vert \mu(z) \Vert_{\infty}:\, 1 < |z| \leq 1 + t \} \leq \hat\sigma(t)/2
\end{equation}
for any $0 < t \leq t_0$, where $t_0 < 1$ is some constant. By applying Lehto's majorization method, \cite[Theorem 2]{Be87} implies that
\begin{equation}\label{half2}
\hat\beta(t^{1+\epsilon}) \leq 8 (\hat k(t) + t^{\epsilon}), \quad \epsilon > 0
\end{equation}
for any $0 < t < 1$. Combining \eqref{half1} and \eqref{half2}, we obtain the assertion.
\end{proof}

By the combination of this proposition with Lemma \ref{ratio},
we can obtain the equivalence between the vanishing conditions of the norms for
the Schwarzian derivative and the pre-Schwarzian derivative.

\begin{theorem}\label{PS}
Let $f$ be a univalent analytic function on $\mathbb H$. For $t>0$, set 
\begin{equation*}
\begin{split}
\beta(t) &:= \sup_{0 < {\rm Re}z \leq t} (2{\rm Re}z) |Pf(z)|,  \\
\sigma(t) &:= \sup_{0 < {\rm Re}z \leq t} (2{\rm Re}z)^2 |Sf(z)|.
\end{split}
\end{equation*}
Then, $\beta(t) \to 0$ if and only if $\sigma(t) \to 0$ as $t \to 0$.
\end{theorem}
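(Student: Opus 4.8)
The plan is to prove the two implications separately, exploiting the fact that one direction is already essentially contained in \eqref{sup} and the other direction will be derived from Proposition \ref{PSdisk} by transplanting the half-plane data to the disk via a Möbius transformation and invoking Lemma \ref{ratio} to control the error term uniformly.

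First I would dispose of the direction $\sigma(t)\to 0 \Rightarrow \beta(t)\to 0$, which is the substantive one. The idea is to transplant $f$ from $\mathbb H$ to $\mathbb D$ and apply Proposition \ref{PSdisk}. Fix a small $t>0$. The vanishing hypothesis $\sigma(t)\to 0$ in particular gives that for $z$ near $i\mathbb R$ we have $\tfrac12(2{\rm Re}\,z)^2|Sf(z)|$ arbitrarily small, so after restricting $f$ to a sufficiently thin strip $\{0<{\rm Re}\,z< t_1\}$ — whose image is still a Jordan domain — we may assume $f$ satisfies the disk-side hypothesis \eqref{Slimsup1} with some $k<1$ after conformally mapping the strip (or rather $\mathbb H$ itself) onto $\mathbb D$. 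More precisely, choose a Möbius transformation $\varphi$ carrying $\mathbb D$ onto $\mathbb H$; then $g:=f\circ\varphi$ is univalent on $\mathbb D$, $g(\mathbb D)=f(\mathbb H)$ is a Jordan domain, and by the Möbius invariance $(1-|z|^2)^2|Sg(z)| = (2{\rm Re}\,\varphi(z))^2|Sf(\varphi(z))|$, so $\limsup_{|z|\to1^-}\tfrac12(1-|z|^2)^2|Sg(z)| = 0 < 1$ thanks to $\sigma(t)\to0$. Hence Proposition \ref{PSdisk} applies to $g$: for every $\epsilon>0$ there is $t_0$ with $\hat\beta_g(t^{1+\epsilon})\le 4\hat\sigma_g(t)+8t^\epsilon$ for $0<t\le t_0$. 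Translating back through $\varphi$ and letting $t\to0$ with $\epsilon$ fixed yields $\beta(t)\to0$, provided one checks that the boundary pieces $\{1-t\le|z|<1\}$ and $\{0<{\rm Re}\,z\le s\}$ correspond compatibly under $\varphi$.

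The main obstacle is precisely this last compatibility check: the correspondence between a Euclidean annular neighborhood of $\partial\mathbb D$ and a Euclidean strip neighborhood of $i\mathbb R$ under a Möbius map is badly distorted near the Denjoy–Wolff-type point $\varphi^{-1}(\infty)$. In other words, $(1-|z|^2)$ and $(2{\rm Re}\,\varphi(z))$ are comparable only away from the preimage of $\infty$, and a sup over a thin annulus in $\mathbb D$ does not translate into a sup over a thin strip in $\mathbb H$. This is exactly the phenomenon quantified by Lemma \ref{ratio}: after composing with the map $\zeta=2/(z+1)$ of Proposition \ref{weak}, the ratio $\rho_D^{-1}/{\rm Re}$ on $D(1,1)$ is small outside a horodisk tangent at $0$, so one can absorb the region near $\infty$ into an error that is already controlled by the (automatically bounded, by Kraus–Nehari) norm $(2{\rm Re}\,z)^2|Sf(z)|\le 6$. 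I would therefore split $\{0<{\rm Re}\,z\le t\}$ into the part mapped into the horodisk — where the transplanted quantities are comparable and Proposition \ref{PSdisk} directly gives smallness — and the part mapped outside, where Lemma \ref{ratio} makes $(2{\rm Re}\,z)^2|Sf|$ and hence, by \eqref{Lemma4} applied to $Pf$, also $(2{\rm Re}\,z)|Pf|$ small. Combining the two estimates gives $\beta(t)\to0$.

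The converse $\beta(t)\to0 \Rightarrow \sigma(t)\to0$ is immediate and I would state it in one line: applying the second implication in \eqref{sup} with $D=\mathbb H$, where $\rho_{\mathbb H}^{-1}(z) = 2{\rm Re}\,z$, to $g=f$ gives exactly $\lim_{{\rm Re}\,z\to0^+}(2{\rm Re}\,z)|Pf(z)|=0 \Rightarrow \lim_{{\rm Re}\,z\to0^+}(2{\rm Re}\,z)^2|Sf(z)|=0$, which in the notation of the theorem is $\beta(t)\to0\Rightarrow\sigma(t)\to0$. (One should note that $f$ here need not be univalent for this direction, only analytic and locally univalent, but since the theorem assumes univalence there is nothing extra to check.) This completes the proof once the first direction is in place.
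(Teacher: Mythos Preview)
Your overall plan --- the easy direction from \eqref{sup}, the hard one via Proposition \ref{PSdisk} with Lemma \ref{ratio} as the corrective --- is the right outline, but the implementation of the hard direction has a genuine gap. A single global M\"obius transfer $g=f\circ\varphi$, $\varphi:\mathbb D\to\mathbb H$, cannot feed Proposition \ref{PSdisk}: your claim that $\limsup_{|w|\to1^-}\tfrac12(1-|w|^2)^2|Sg(w)|=0$ is false, because every annulus $\{1-s\le|w|<1\}$ contains points $w$ near $\varphi^{-1}(\infty)$ whose images have ${\rm Re}\,\varphi(w)$ arbitrarily large; there $(1-|w|^2)^2|Sg(w)|=(2{\rm Re}\,\varphi(w))^2|Sf(\varphi(w))|$ is bounded only by the Kraus--Nehari constant $6$, not by $\sigma$. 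So the limsup may well exceed $2$ and Proposition \ref{PSdisk} does not apply. (There is also no assumption in Theorem \ref{PS} that $g(\mathbb D)=f(\mathbb H)$ is a Jordan domain.) Your proposed patch --- on the ``outside'' piece, make $(2{\rm Re}\,z)|Pf|$ small from smallness of $(2{\rm Re}\,z)^2|Sf|$ via \eqref{Lemma4} --- goes in the wrong direction: \eqref{Lemma4} passes from a function to its derivative, so it upgrades control of $Pf$ to control of $(Pf)'$, not the reverse; recovering $Pf$ from $Sf$ is precisely the hard step you are trying to prove.

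The paper avoids any global $\mathbb D$--$\mathbb H$ identification. For each boundary point $iy_0$ it works on the disk $D=D(1+iy_0,1)\subset\mathbb H$ tangent to $i\mathbb R$ at $iy_0$. Since $\rho_D\ge\rho_{\mathbb H}$, one has $\rho_D^{-2}|Sf|\le(2{\rm Re}\,z)^2|Sf|\le\sigma(t_0)$ on the part of $D$ with small ${\rm Re}\,z$; on the rest of a thin $\partial D$-annulus, Lemma \ref{ratio} makes $\rho_D^{-1}/{\rm Re}\,z$ small, and then Kraus--Nehari forces $\rho_D^{-2}|Sf|\le 6(\rho_D^{-1}/2{\rm Re}\,z)^2$ small as well. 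Thus the limsup hypothesis of Proposition \ref{PSdisk} is met for $f|_D$, and since $\overline D\setminus\{iy_0\}\subset\mathbb H$ the Jordan-domain requirement reduces to a one-point issue. The resulting bound on $\hat\beta_D$ is read off along the segment $z=x+iy_0$, where $d(z,\partial D)={\rm Re}\,z\le\rho_D^{-1}(z)$, giving $({\rm Re}\,z)|Pf(z)|\le\hat\beta_D(\cdot)$; since $iy_0$ was arbitrary this yields $\beta\to0$. So Lemma \ref{ratio} is indeed the key device, but it must be deployed on these auxiliary disks inside $\mathbb H$, not to rescue a single global transplant.
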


\begin{proof}
If $\beta(t) \to 0$ as $t \to 0$, then  $\sigma(t) \to 0$ as $t \to 0$ by \eqref{sup}. We will show the opposite implication by means of of Proposition \ref{PSdisk}. 

Now suppose $\sigma(t) \to 0$ as $t \to 0$. We choose $t_0 \in(0, 1)$ such that $\sigma(t_0) \leq 2k \,(< 2)$. 
For an arbitrary boundary point $iy_0 \in i\mathbb R$, 
take the open disk $D := D(1+iy_0, 1) \subset \mathbb H$ of radius $1$ that is tangent to $i\mathbb R$ at $iy_0$.
For any $z$ in the domain $\{z \in D: 0 < {\rm Re}\,z \leq t_0 \}$, obviously we have
\begin{equation}\label{domain1}
\rho_{D}^{-2}(z) |Sf(z)| \leq (2{\rm Re z})^2 |Sf(z)| \leq \sigma(t_0).
\end{equation}
Moreover, by Lemma \ref{ratio}, there exists a horodisk $D' \, (\subset D)$ tangent to $\partial D$ at $iy_0$ such that 
$\rho_D^{-1}(z)/({\rm Re}\, z) \leq (2\sigma(t_0)/3)^{1/2}$ for all $z \in D\backslash D'$. This yields that 
\begin{equation}\label{domain2}
\begin{split}
\rho_{D}^{-2}(z) |Sf(z)| &= (2{\rm Re}z)^2 |Sf(z)| \left(\frac{\rho_D^{-1}(z)}{2{\rm Re}z}\right)^2
\leq 6 \left(\frac{\rho_D^{-1}(z)}{2{\rm Re}z}\right)^2 \leq \sigma(t_0)
\end{split}
\end{equation}
for all $z \in D\backslash D'$. 

Combining \eqref{domain1} and \eqref{domain2}, we can define a function $t_1 := \lambda(t_0) \, (\leq t_0)$ tending to $0$ monotonically and continuously as $t_0 \to 0$ such that 
\begin{equation}\label{relation1}
\hat\sigma_D(t_1) := \sup_{0 < d(z, \partial D) \leq t_1 \atop z \in D} \rho_{D}^{-2}(z) |Sf(z)| \leq \sigma(t_0).
\end{equation}
Similarly, we define 
$$
\hat\beta_D(t_1) := \sup_{0 < d(z, \partial D) \leq t_1 \atop z \in D} \rho_{D}^{-1}(z) |Pf(z)|.
$$
By Proposition \ref{PSdisk} for $\epsilon = 1$, we have 
\begin{equation}\label{relation2}
\hat\beta_D (t_1^2) \leq 4\hat\sigma_D (t_1) + 8 t_1 
\end{equation}
for all sufficiently small $t_1>0$.

Now we consider the estimate only on the particular segment $z = x + iy_0$ with $0 < x < t_1$ in the annulus 
$\{z \in D:0 < d(z, \partial D) < t_1\}$. On this segment,
we have that 
\begin{equation}\label{relation3}
({\rm Re}z) |Pf(z)| = d(z, \partial D) |Pf(z)| \leq \rho_D^{-1}(z) |Pf(z)| \leq \hat\beta_D(t_1). 
\end{equation}
Since $iy_0 \in i\mathbb R$ is taken arbitrarily, combining \eqref{relation1}, \eqref{relation2}, and \eqref{relation3} together, 
we obtain
$$
\beta(t_1^{2}) \leq 8 \sigma (\lambda^{-1}(t_1)) + 16 t_1.
$$
This tends to $0$ as $t_1 \to 0$.  
\end{proof}
 
\begin{remark}
To the best of our knowledge, the equivalence of the conditions in Theorem \ref{PS}
is known only under the extra assumption that $f$ can be quasiconformally extended to $\overline{\mathbb C}$ 
(see \cite[Theorem 6.2]{Sh}, \cite[Theorem 5.1]{WM-1}).
By a similar proof or by an application,
we see the statement of Theorem \ref{PS} is true also for $\mathbb D$. Namely, 
$\hat \beta(t) \to 0$ if and only if $\hat \sigma(t) \to 0$ as $t \to 0$ without any extra assumption on
a univalent analytic function $g$ on $\mathbb D$. This improves the result in \cite{Be87}.
\end{remark}

\section{ Quasiconformal extensions by means of Schwarzian derivative (Proof of Theorem \ref{main1})}\label{mainproof}

In this section, we prove Theorem \ref{main1}. 
For this argument, the following chordal analogue of Becker's result \cite{Be72} mentioned in the introduction 
plays an important role, which was obtained in Gumenyuk and Hotta \cite[Theorem 3.5]{GH}.

\begin{theorem}\label{chord}
Let $(f_t)$ be a chordal Loewner chain over the interval $0 \leq t < \tau$ in $\mathbb H$ with 
Herglotz function $p$. Suppose that there exists a constant $k \in [0, 1)$ such that 
\begin{equation}
\label{kdisk}
p(z, t) \in  
U(k)= \left\{w \in \mathbb H: \; \left| \frac{w - 1}{w + 1}\right| \leq k \right\}
\end{equation}
for all $z \in \mathbb H$ and almost every $0 \leq t < \tau$. 
Then, 
\begin{enumerate}
\item[(i)]
$f_t$ has a continuous extension to $i\mathbb R$ for all $0 \leq t < \tau$;  
\item[(ii)]
$f_t$ can be $k$-quasiconformally extended to 
$\{z  \in \mathbb C:  0 \leq - {\rm Re}\, z < \tau -t\}$ by setting $f_t(z) = f_{t- {\rm Re} z}(i\,{\rm Im} z)$
for all $0 \leq t < \tau$;
\item[(iii)]
all elements of the evolution family $(\varphi_{s,t})$ associated with $p$ are  
$k$-quasiconformally extendible to $\overline{\mathbb C}$ with $\varphi_{s,t}(\infty) = \infty$ for all $s \geq 0$ and $s \leq t < \tau$. 
\end{enumerate}
\end{theorem}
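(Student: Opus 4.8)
\emph{Overview.} The extension formula in~(ii) is reverse engineered from the chordal Loewner--Kufarev PDE~\eqref{PDE5} so that the Beltrami coefficient of the extended map comes out equal to $(p-1)/(p+1)$, and hypothesis~\eqref{kdisk} says exactly that $|(p-1)/(p+1)|\le k$; so the dilatation estimate will be essentially free, and the real work is the boundary behaviour. I would first record that $U(k)$ is the closed disc in $\mathbb H$ with centre $(1+k^2)/(1-k^2)$ and radius $2k/(1-k^2)$, whence \eqref{kdisk} gives ${\rm Re}\,p(z,t)\ge c_0:=(1-k)/(1+k)>0$ and $|p(z,t)|\le C_0:=(1+k)/(1-k)$ for all $z\in\mathbb H$ and a.e.\ $t$. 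The plan has three steps: (a) control $(\varphi_{s,t})$ near $i\mathbb R$; (b) prove (iii) by writing down an explicit $k$-quasiconformal extension of $\varphi_{s,t}$ to $\overline{\mathbb C}$; (c) deduce (i) and (ii) from (iii) by factoring $f_t=f_{s''}\circ\varphi_{t,s''}$.

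\emph{Step (a).} From \eqref{ODE5} and the bounds above, ${\rm Re}\,\varphi_{s,t}(z)\ge{\rm Re}\,z+c_0(t-s)$ and $|\varphi_{s,t}(z)-z|\le C_0(t-s)$ on $\mathbb H$. The point is that the strict transversality ${\rm Re}\,p\ge c_0>0$ lets one run the chordal flow \eqref{ODE5} starting from boundary data: I would show that each $\varphi_{s,t}$ extends continuously to $\overline{\mathbb H}$ with ${\rm Re}\,\varphi_{s,t}(iy)\ge c_0(t-s)$ and injectively (uniqueness of the flow, read backward), and with the regularity needed later (the companion of \eqref{PDE5} for the evolution family, $\partial_s\varphi_{s,t}=-p(\cdot,s)\,\varphi_{s,t}'$, holding a.e.\ on $i\mathbb R$, and $\varphi_{s,t}'\in L^1_{\rm loc}$ up to $i\mathbb R$). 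Run backward, the trajectory through any interior point of $\mathbb H$ meets $i\mathbb R$ at a single time, so the arcs $\varphi_{\sigma,t}(i\mathbb R)$, $\sigma\in[0,t]$, are pairwise disjoint and sweep out $\overline{\mathbb H}\setminus\varphi_{0,t}(\mathbb H)$.

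\emph{Step (b): proof of (iii).} For $0\le s\le t<\tau$ I would extend $\varphi_{s,t}$ to $\overline{\mathbb C}$ by: $\varphi_{s,t}$ on $\mathbb H$; $z\mapsto\varphi_{s-{\rm Re}\,z,\,t}(i\,{\rm Im}\,z)$ on $\{s-t<{\rm Re}\,z\le 0\}$; and the translation $z\mapsto z+(t-s)$ on $\{{\rm Re}\,z\le s-t\}$. These glue continuously (along $i\mathbb R$ by step (a), along $\{{\rm Re}\,z=s-t\}$ because $\varphi_{t,t}={\rm id}_{\mathbb H}$), and the disjointness in step (a) makes the resulting $\hat\varphi_{s,t}$ a homeomorphism of $\overline{\mathbb C}$ fixing $\infty$. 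It is conformal off the middle strip; on the strip, with $z=x+iy$ and $P=p(iy,\,s-x)$, the companion equation gives $\partial_x\hat\varphi_{s,t}=P\,\varphi_{s-x,t}'(iy)$ and $\partial_y\hat\varphi_{s,t}=i\,\varphi_{s-x,t}'(iy)$, hence
\[
\frac{\partial_{\bar z}\hat\varphi_{s,t}}{\partial_z\hat\varphi_{s,t}}=\frac{P-1}{P+1},\qquad\Bigl|\frac{P-1}{P+1}\Bigr|\le k
\]
by \eqref{kdisk}. Since $\hat\varphi_{s,t}$ is also ACL (continuous, conformal off the strip, differentiable with $L^1_{\rm loc}$ partials on the strip, and the two regimes fit together across $i\mathbb R$ because absolute continuity on two abutting segments plus continuity at their common endpoint yields absolute continuity on the union), the analytic definition of quasiconformality gives that $\hat\varphi_{s,t}$ is $k$-quasiconformal, which is (iii).

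\emph{Step (c): proof of (i) and (ii), and the main obstacle.} For $0\le t<\tau$ and any $s''\in(t,\tau)$ I would set $f_t(iy):=f_{s''}(\varphi_{t,s''}(iy))$; as $\varphi_{t,s''}(iy)$ lies in $\mathbb H$ at distance $\ge c_0(s''-t)$ from $i\mathbb R$ and $f_{s''}$ is holomorphic, this is independent of $s''$ (evolution-family law), continuous in $y$, and equals $\lim_{\mathbb H\ni z\to iy}f_t(z)$, which is (i). For (ii), observe that on $\{{\rm Re}\,z>t-s''\}$ the extension $\hat f_t$ of (ii) equals $f_{s''}\circ\hat\varphi_{t,s''}$: on $\mathbb H$ this reads $f_{s''}\circ\varphi_{t,s''}=f_t$, and on the strip it reads $f_{s''}(\varphi_{t-{\rm Re}\,z,\,s''}(i\,{\rm Im}\,z))=f_{t-{\rm Re}\,z}(i\,{\rm Im}\,z)$ by the definition of the boundary values. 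Since $\hat\varphi_{t,s''}$ maps $\{{\rm Re}\,z>t-s''\}$ onto $\mathbb H$ and is $k$-quasiconformal by step (b), while $f_{s''}$ is conformal on $\mathbb H$, the composite $\hat f_t$ is $k$-quasiconformal there; letting $s''\uparrow\tau$ gives $\hat f_t$ is $k$-quasiconformal on $\{z:{\rm Re}\,z>t-\tau\}$, which is (ii). The hard part is entirely step (a): the deterministic boundary regularity of the chordal Loewner equation. This is where $k<1$ is genuinely used — ${\rm Re}\,p\ge c_0>0$ confines the flow to $\mathbb H$, forces its backward trajectories to exit $\mathbb H$ at a single time, and keeps $f_{s''}$ evaluated away from its own boundary, so the composite extension is an honest homeomorphism; if ${\rm Re}\,p$ were allowed to vanish, the flow could stick to $i\mathbb R$ and the construction would collapse.
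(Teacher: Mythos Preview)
The paper does not supply its own proof of this theorem; it is quoted from Gumenyuk--Hotta \cite[Theorem~3.5]{GH}, with the sole remark that inspection of their argument shows the statement remains valid when $[0,+\infty)$ is replaced by a finite interval $[0,\tau)$. Your outline is precisely the Gumenyuk--Hotta construction --- extend each $\varphi_{s,t}$ across $i\mathbb R$ by the flow on boundary data (and a translation further left), read off the Beltrami coefficient as $(p-1)/(p+1)$, then factor $f_t=f_{s''}\circ\hat\varphi_{t,s''}$ to transfer the extension to the chain --- and you correctly flag the boundary regularity in step~(a) as the place where the genuine analytic work sits.
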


We remark that the original statement of \cite[Theorem 3.5]{GH} was concerning the case of
$\tau=+\infty$, namely, the time variable $t$ extends to $+\infty$ as usual in the Loewner theory.
By examining its proof, however, we see that the above form of the statement is valid for any constant $\tau$ 
in the interval $(0, +\infty]$.

For the construction of the quasiconformal extension $\hat h$ of $h$ in Theorem \ref{main1},  a chordal Loewner chain $(h_t)$ over the interval $[0, \tau)$ 
will be formed  so that $h_0(z) = h(z)$ and the boundary values of $h_t(z)$ yield the extension $\hat h$, 
as is shown in (ii) of Theorem \ref{chord}. 
In the next lemma, we show that a canonical family of analytic functions $(h_t)$ can be constructed as in \cite[Proposition 5.5]{GH} and
it satisfies the chordal Loewner--Kufarev PDE with an appropriate Herglotz function.

\begin{lemma}\label{newlemma}
Let $h$ be univalent and analytic in $\mathbb H$ with $\lim_{z \to \infty}h(z) = \infty$.
Let $h$ satisfy
\begin{equation}\tag{\ref{Slimsup2}}
\lim_{{\rm Re} z\to 0^+} \frac{1}{2}(2{\rm Re}z)^2 \left|Sh(z)\right| = 0.
\end{equation}
Then, for any $0 < k <1$, there is a positive constant $\tau_0>0$ such that
functions on $\mathbb H$ defined by
\begin{equation}\label{family}
h_t(z) = h(z+t) - \frac{2t h'(z+t)}{1+tPh(z+t)}
\end{equation}
for $0 \leq t \leq \tau_0$ are analytic, and this family $(h_t)$ satisfies $h_0=h$ and
the chordal Loewner--Kufarev PDE 
\begin{equation}\label{PDE5}
\dot{h}_t(z) = -p(z, t) h'_t(z) 
\end{equation}
with Herglotz function $p(z,t)$ satisfying \eqref{kdisk}.
\end{lemma}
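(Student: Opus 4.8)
The plan is to compute $\dot h_t$ and $h_t'$ directly from the explicit formula \eqref{family}, identify the ratio $-\dot h_t/h_t'$ as a function $p(z,t)$, and then verify that $p$ is a Herglotz function taking values in $U(k)$ for $t$ small. The starting point is a clean algebraic identity: writing $w = z+t$ and abbreviating $P = Ph(w)$, $S = Sh(w)$, differentiation of \eqref{family} in $t$ (noting $\partial_t w = 1$) and in $z$ (noting $\partial_z w = 1$) both reduce to expressions in $h'(w)$, $P$, and $S$ only, because $P' = S + P^2/2$. Carrying this out, I expect $h_t'(z)$ to factor as $h'(w)$ times a rational expression in $tP$ and $t^2 S$, and similarly for $\dot h_t(z)$; the quotient should collapse to something like
\begin{equation*}
p(z,t) = -\frac{\dot h_t(z)}{h_t'(z)} = \frac{2}{\,1 - \tfrac{1}{2}t^2\,\dfrac{S}{\big(1+tP\big)^{?}}\,}
\end{equation*}
or an equivalent Möbius-type expression in the single quantity $t^2 Sh(z+t)/(1+tPh(z+t))^2$ — the precise exponent and constants I would pin down in the computation. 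The key structural fact I am betting on is that $p(z,t)$ depends on $z$ and $t$ only through a quantity comparable to $(2\,{\rm Re}(z+t))^2\,|Sh(z+t)|$ once $t \asymp {\rm Re}\,z$ is small, so that the hypothesis \eqref{Slimsup2} controls it.

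Next I would establish that the family $(h_t)$ is well-defined and analytic for $0 \le t \le \tau_0$. The only possible singularity in \eqref{family} is where $1 + tPh(z+t) = 0$; since $|Ph(w)| \le 3/(2\,{\rm Re}\,w)$ for univalent $h$ (the standard bound, as used in Proposition \ref{weak}) and ${\rm Re}(z+t) \ge {\rm Re}\,z > 0$, we get $t|Ph(z+t)| \le 3t/(2{\rm Re}(z+t)) $, which is not automatically less than $1$ for $z$ near $i\mathbb R$. This is where \eqref{Slimsup2} — equivalently \eqref{Plimsup2} via Theorem \ref{PS} — enters: it gives $(2{\rm Re}\,w)|Ph(w)| \le \beta(2{\rm Re}\,w) \to 0$, so for ${\rm Re}\,w$ small the factor $t|Ph(z+t)|$ is small provided $t \le {\rm Re}(z+t)$; for ${\rm Re}(z+t)$ bounded away from $0$ we instead use local boundedness of $Ph$ on compact subsets of $\mathbb H$ together with $\lim_{z\to\infty}h(z)=\infty$ to handle the neighborhood of $\infty$. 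Combining these, for $\tau_0$ small enough $|1+tPh(z+t)| \ge 1/2$ throughout $\mathbb H$, so $h_t$ is analytic, and $h_0 = h$ is immediate by setting $t=0$.

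Finally I would check the Herglotz conditions (HF1), (HF2) and the inclusion $p(z,t) \in U(k)$. Local integrability in $t$ and analyticity in $z$ follow from the explicit rational formula for $p$ once the denominator is bounded away from zero. For the crucial inclusion: $U(k)$ is the image under the Cayley-type map $w \mapsto (w-1)/(w+1)$ of the disk $\{|\cdot| \le k\}$, and if $p(z,t) = 2/(1 - \eta(z,t))$ with $|\eta(z,t)| \le \kappa$ small, then $(p-1)/(p+1) = (1+\eta)/(3-\eta)$ has modulus $\le (1+\kappa)/(3-\kappa) < 1$, which can be made $\le k$ for any prescribed $k \in (0,1)$ by taking $\kappa$ — hence $\tau_0$ — small, using that $\eta$ is controlled by $t^2|Sh(z+t)|/(1+tPh(z+t))^2 \lesssim \sigma(2{\rm Re}(z+t))$ when $t \asymp {\rm Re}\,z$ and by the compactness argument near $\infty$ otherwise. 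The step I expect to be the main obstacle is the uniform control of the denominator $1 + tPh(z+t)$ and of $\eta(z,t)$ simultaneously over all of $\mathbb H$: near $i\mathbb R$ one leans on the vanishing conditions \eqref{Slimsup2}/\eqref{Plimsup2}, near $\infty$ one leans on $h(z)\to\infty$ and interior estimates, and in the bulk one leans on local boundedness — reconciling these three regimes into a single $\tau_0$ is the technical heart of the lemma, and is presumably why the authors single it out. Once $p$ is shown to be a Herglotz function valued in $U(k)$ and \eqref{PDE5} holds by construction, it follows (via the theory recalled in Section \ref{Loewner}, together with the growth/continuity input) that $(h_t)_{0\le t\le\tau_0}$ is indeed a chordal Loewner chain, completing the proof.
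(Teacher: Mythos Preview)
Your overall plan---compute $\dot h_t$ and $h_t'$ from \eqref{family}, form $p=-\dot h_t/h_t'$, and check $p\in U(k)$---is exactly the paper's approach. But two concrete things go wrong in your execution.

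First, the form you guess for $p$ is incorrect, and this actually breaks your verification of $p\in U(k)$. Carrying out the differentiation (using $h''=Ph\cdot h'$ and $(Ph)'=Sh+\tfrac12(Ph)^2$) gives
\[
\dot h_t(z)=-\,h'(z+t)\,\frac{1-2t^2 Sh(z+t)}{(1+tPh(z+t))^2},\qquad
h_t'(z)=h'(z+t)\,\frac{1+2t^2 Sh(z+t)}{(1+tPh(z+t))^2},
\]
so the factors $(1+tPh)^2$ cancel and
\[
p(z,t)=\frac{1-2t^2 Sh(z+t)}{1+2t^2 Sh(z+t)},
\]
which depends only on $Sh$, not on $Ph$ or on the combination $t^2 Sh/(1+tPh)^2$ you propose. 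With the correct formula one has $\bigl|(p-1)/(p+1)\bigr|=2t^2|Sh(z+t)|$ exactly, and the inclusion $p\in U(k)$ reduces to the single estimate $2t^2|Sh(z+t)|\le k$. By contrast, your guess $p=2/(1-\eta)$ gives $\bigl|(p-1)/(p+1)\bigr|=\bigl|(1+\eta)/(3-\eta)\bigr|\ge 1/3$ for all small $|\eta|$, which \emph{cannot} be made $\le k$ when $k<1/3$; so the check you wrote down fails as stated.

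Second, you misidentify the ``technical heart'' of the lemma. The uniform control of $t|Ph(z+t)|$ and $2t^2|Sh(z+t)|$ over all of $\mathbb H$ requires no three-region decomposition, and the hypothesis $\lim_{z\to\infty}h(z)=\infty$ plays no role here. The paper's device is a one-line generalized maximum principle: for fixed small $t$, the bounded analytic function $z\mapsto tPh(z+t)$ on $\mathbb H$ satisfies $t|Ph(iy+t)|={\rm Re}(iy+t)\,|Ph(iy+t)|\le k/2$ on the boundary $i\mathbb R$ by \eqref{Plimsup2} (obtained from \eqref{Slimsup2} via Theorem~\ref{PS}), hence the same bound holds throughout $\mathbb H$; similarly $2t^2|Sh(z+t)|\le k$ from \eqref{Slimsup2}. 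This single step simultaneously makes $h_t$ analytic (denominator nonvanishing) and puts $p$ in $U(k)$. Your region-splitting idea is salvageable in principle (two regions, using the Kraus-type bound $|Ph(w)|\lesssim 1/{\rm Re}\,w$ where ${\rm Re}\,w$ is bounded below), but the appeals to ``local boundedness on compact subsets'' and ``compactness near $\infty$'' are red herrings that would not close the argument as you describe it.
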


\begin{proof}
In view of Theorem \ref{PS}, \eqref{Slimsup2} implies that 
\begin{equation}\tag{\ref{Plimsup2}}
\lim_{{\rm Re} z\to 0^+} (2{\rm Re} z) \left|Ph(z)\right| = 0, 
\end{equation}
that is, for any $0 < k < 1$, there exists a positive constant $t_0$ such that 
$$
(2{\rm Re} z) \left|Ph(z)\right| \leq k 
$$
for all $z \in \mathbb H$ with $0 < {\rm Re}\, z \leq t_0$. It follows that 
$$
t |Ph(iy+t)| = {\rm Re}(iy+t) |Ph(iy+t)| \leq k/2
$$
for all $iy \in i\mathbb R$ and all $0 < t \leq t_0$. A generalized maximal principle (see \cite[p.56]{Ga}) implies  
that $t Ph(z+t)$, as an analytic function of the variable $z$, is bounded in $\mathbb H$ such that  
\begin{equation}\label{denom1}
t |Ph(z+t)| \leq k/2, \qquad 0 \leq t \leq t_0. 
\end{equation}

Similarly, according to \eqref{Slimsup2}, there exists a positive constant $\tau_0  \;(< t_0)$ such that  
\begin{equation*}\label{Sk}
 \frac{1}{2}(2{\rm Re} z)^2 \left|Sh(z)\right| \leq k, 
\end{equation*}
for all $z \in \mathbb H$ with $0 < {\rm Re}\, z \leq \tau_0$. 
From this it follows that 
$$
2t^2 |Sh(iy+t)| = \frac{1}{2} \left(2{\rm Re} (iy+t)\right)^2 \left|Sh(iy + t)\right| \leq k
$$ 
for all $iy \in i\mathbb R$ and all $0 < t \leq \tau_0$. 
The generalized maximal principle again implies that 
\begin{equation}\label{denom2}
2t^2 |Sh(z+t)| \leq k, \qquad 0 \leq t \leq \tau_0 
\end{equation}
for all $z \in \mathbb H$.

We consider the family of functions given by \eqref{family}.
This is well-defined to be analytic in $\mathbb H$ for $0 \leq t \leq t_0$ by \eqref{denom1}.  
Obviously, $h_0(z) = h(z)$. By computation, we have
\begin{equation}\label{hdot}
\dot{h}_t(z) := \frac{d h_t(z)}{d t} = -h'(z + t)\frac{1- 2t^2 Sh(z+t)}{\left( 1 + tPh(z + t) \right)^2},
\end{equation}
and
\begin{equation}\label{ht}
h'_t(z) := \frac{d h_t(z)}{d z} =h'(z + t)\frac{1+ 2t^2 Sh(z+t)}{\left( 1 + tPh(z + t) \right)^2}. 
\end{equation}
From these expressions \eqref{hdot} and \eqref{ht}, it follows that $(h_t)$ satisfies the chordal Loewner--Kufarev PDE
\eqref{PDE5}
with Herglotz function 
\begin{equation}\label{Sherg}
p(z, t) = \frac{1- 2t^2 Sh(z+t)}{1 + 2t^2 Sh(z+t)}.
\end{equation}
Moreover, by \eqref{denom2} we can deduce that
\begin{equation}\label{Spk}
\left| \frac{p(z, t) - 1}{p(z, t) + 1} \right| = 2t^2 |Sh(z+t)| \leq k <1
\end{equation}
for all $z \in \mathbb H$ and all $0 \leq t \leq \tau_0$. Namely, $p(z, t) \in U(k)$. 
\end{proof}

In the rest of this section, we will focus our attention on showing the univalence of the analytic function
$h_t$ in $\mathbb H$ defined by \eqref{family} 
for each $t$ in some interval. 
The arguments will be separated into the cases of assumptions $\mathbf{(A)}$ and $\mathbf{(B)}$.

\begin{theorem}\label{univalentA}
Let $h$ be univalent and analytic in $\mathbb H$ with $\lim_{z \to \infty}h(z) = \infty$ and let $h(\mathbb H)$ be a Jordan domain. Let $h$ satisfy
\begin{equation}\tag{\ref{Slimsup2}}
\lim_{{\rm Re} z\to 0^+} \frac{1}{2}(2{\rm Re}z)^2 \left|Sh(z)\right| = 0.
\end{equation}
Suppose $\mathbf{ (A)}$:  both $h$ and $h^{-1}$ are locally uniformly continuous in a neighborhood of $\infty$. 
Then, there is a positive constant $\tau \leq \tau_0$ such that the analytic function $h_t$ on $\mathbb H$
defined by \eqref{family} is univalent for all $0 \leq t \leq \tau$.
\end{theorem}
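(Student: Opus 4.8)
\emph{Overview and set-up.} The plan is to combine Lemma \ref{newlemma}, which presents $(h_t)$ as a solution of the chordal Loewner--Kufarev PDE driven by a Herglotz function valued in $U(k)$, with the quasiconformal machinery of Theorem \ref{chord}, and then to show that on a short time interval the univalence of $h=h_0$ propagates to $h_t$. After fixing $k\in(0,1)$ and shrinking $\tau_0$ as in Lemma \ref{newlemma}, one has $p(z,t)=\frac{1-2t^2Sh(z+t)}{1+2t^2Sh(z+t)}$ with $|2t^2Sh(z+t)|\le k$, which keeps $p$ off $i\mathbb R$ and forces ${\rm Re}\,p>0$ everywhere. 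Let $(\varphi_{s,t})$ be the evolution family of $p$. By Theorem \ref{chord}(iii) each $\varphi_{s,t}$ extends to a $k$-quasiconformal self-map of $\overline{\mathbb C}$ fixing $\infty$; from $\frac{d}{dt}{\rm Re}\,\varphi_{s,t}(z)={\rm Re}\,p(\varphi_{s,t}(z),t)>0$ the boundary is pushed strictly inward, and from $|\varphi_{0,t}(z)-z|=|\int_0^tp(\varphi_{0,\xi}(z),\xi)\,d\xi|\le Ct$ with $C=\frac{1+k}{1-k}$ ($U(k)$ being a bounded set) one gets that $\Omega_t:=\varphi_{0,t}(\mathbb H)$ satisfies $\overline{\Omega_t}\setminus\{\infty\}\subset\mathbb H$ and $\Sigma_t:=\mathbb H\setminus\overline{\Omega_t}\subseteq\{0<{\rm Re}\,z\le Ct\}$. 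Finally $h_t(z)=h(z+t)-\frac{2th'(z+t)}{1+tPh(z+t)}$ is analytic on $\{{\rm Re}\,z>-t\}$ with $h_t'\ne0$ there, by \eqref{ht}.

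\emph{Propagation on $\Omega_t$.} The crucial identity is $h_s=h_t\circ\varphi_{s,t}$ for $0\le s\le t\le\tau_0$, which I would prove by the method of characteristics: using $\partial_s\varphi_{s,t}(z)=-p(z,s)\varphi_{s,t}'(z)$, the map $(z,s)\mapsto h_t(\varphi_{s,t}(z))$ solves the same transport equation as $(h_s)$ and agrees with it at $s=t$. With $s=0$ this gives $h=h_t\circ\varphi_{0,t}$, so $h_t|_{\Omega_t}=h\circ\varphi_{0,t}^{-1}$ is injective; since $\overline{\Omega_t}\setminus\{\infty\}\subset\mathbb H$, the identity persists on $\overline{\Omega_t}$, where $h_t$ is then a homeomorphism onto $\overline{h(\mathbb H)}$. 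This uses that $h$ itself extends to a homeomorphism of $\overline{\mathbb H}\cup\{\infty\}$ onto $\overline{h(\mathbb H)}$ --- by Carath\'eodory's theorem on the finite boundary, $h(\mathbb H)$ being a Jordan domain, and by assumption $\mathbf{(A)}$ at $\infty$.

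\emph{Reduction to the sliver.} It remains to control $h_t$ on the Jordan domain $\Sigma_t=\mathbb H\setminus\overline{\Omega_t}$, which lies in $\{0<{\rm Re}\,z\le Ct\}$ and shrinks spherically onto $i\mathbb R\cup\{\infty\}$ as $t\downarrow0$; its boundary is $\varphi_{0,t}(i\mathbb R\cup\{\infty\})$ together with $i\mathbb R\cup\{\infty\}$. On $\varphi_{0,t}(i\mathbb R)$ we have $h_t=h\circ\varphi_{0,t}^{-1}$ injective with image $\partial h(\mathbb H)$, so --- provided the arc $h_t(i\mathbb R\cup\{\infty\})$ is injective, meets $\partial h(\mathbb H)$ only at $\infty$, and lies on the outer side of $\partial h(\mathbb H)$ --- the curve $h_t|_{\partial\Sigma_t}$ is a Jordan curve and the argument principle makes $h_t$ injective on $\Sigma_t$ with $h_t(\Sigma_t)$ disjoint from $\overline{h(\mathbb H)}$. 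Then $h_t$ has pairwise disjoint images on $\Omega_t$, on $\partial\Omega_t\cap\mathbb H$, and on $\Sigma_t$, so it is injective on $\mathbb H$, which is the assertion.

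\emph{The boundary arc, and the main obstacle.} Everything therefore comes down to the behavior of $h_t$ on $i\mathbb R\cup\{\infty\}$ for small $t$. On compact arcs of $i\mathbb R$ I would use \eqref{Plimsup2} (equivalent to \eqref{Slimsup2} by Theorem \ref{PS}): since $(2{\rm Re}\,z)|Ph(z)|\to0$ at $i\mathbb R$, the function $\log h'$ oscillates by $o(1)$ over disks of radius of order $t$ centered near $i\mathbb R$, and the correction term $\frac{2th'(z+t)}{1+tPh(z+t)}$ has modulus $O(t|h'(z+t)|)=o(1)$ by the distortion estimate $(2{\rm Re}\,z)|h'(z)|\le4|h(z)-h(iy)|$; hence $h_t$ is nearly affine on such disks, and since $\Sigma_t$ has width of this same order this gives the required injectivity there, with image avoiding $\overline{h(\mathbb H)}$. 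Near $\infty$ the same has to be arranged from assumption $\mathbf{(A)}$: passing to the chart $w=1/z$, where $h$ and $h^{-1}$ are uniformly continuous, $h'$ is controlled near $\infty$ and the correction term is again uniformly negligible. The genuinely delicate step --- and, as Corollary \ref{example} shows, the one that collapses without an extra hypothesis --- is exactly this analysis near $\infty$: since $i\mathbb R\cup\{\infty\}$ is non-compact in $\mathbb C$, neither $h_t\to h$ nor the injectivity of $h$ on the boundary is automatically uniform there, and assumption $\mathbf{(A)}$ is precisely the device supplying the uniform control, which is what finally yields a single $\tau\le\tau_0$ (via a contradiction along a sequence $t_n\downarrow0$).
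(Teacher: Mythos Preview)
Your set-up through the identity $h=h_t\circ\varphi_{0,t}$, and hence the univalence of $h_t$ on $\Omega_t:=\varphi_{0,t}(\mathbb H)$, matches the paper. From there the two arguments diverge. The paper never examines $\partial\Sigma_t$ or asks where $h_t(i\mathbb R)$ sits relative to $h(\mathbb H)$; it proves $h_t(z_1)\ne h_t(z_2)$ for every pair of distinct points of $\mathbb H$ by a three-way case split. For $|z_1-z_2|<r$ it shows $\rho_{Q_{2r}}^{-1}|Ph_t|$ is small on every square $Q_{2r}=(0,2r)\times(y_0-r,y_0+r)$ (this is the rigorous form of your ``nearly affine'') and invokes the pre-Schwarzian univalence criterion there, while on $\{{\rm Re}\,z>r\}\subset\Omega_t$ univalence is inherited from $h$. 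For $|z_1-z_2|\ge r$ with both $|z_i|>R$ it works directly with formula \eqref{family}: uniform continuity of $h^{-1}$ gives $|h(z_1+t)-h(z_2+t)|\ge\delta$, uniform continuity of $h$ together with the Koebe distortion bound $t|h'(z_i+t)|\le 4|h(z_i+t)-h(z_i)|$ controls the correction terms, and one subtracts. For the remaining pairs (one point in $\{|z|\le R\}$) a compactness contradiction along $t_n\downarrow0$, using the joint continuity $h_t(z)\to h(z_0)$ established first, finishes.

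Your argument-principle route is a legitimate alternative idea, but there is a real gap at the step ``$h_t(i\mathbb R)$ lies on the outer side of $\partial h(\mathbb H)$.'' Local univalence of $h_t$ on a disk $D$ straddling $\partial\Omega_t$ only tells you that $h_t(D\cap\Sigma_t)$ lies on the opposite side of the arc $h_t(D\cap\partial\Omega_t)\subset\partial h(\mathbb H)$ from $h_t(D\cap\Omega_t)$ \emph{inside the small region $h_t(D)$}; it does not by itself prevent $h_t(D\cap\Sigma_t)$ from meeting a distant portion of $h(\mathbb H)$. Promoting ``locally outside'' to ``globally outside'' needs a uniform Jordan-curve separation estimate along all of $i\mathbb R$, and that uniformity is exactly the non-compactness issue that Corollary~\ref{example} shows can fail without an extra hypothesis. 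Likewise, your sketch does not establish global injectivity of $h_t$ on $i\mathbb R$ for pairs $iy_1,iy_2$ that are far apart; the contradiction-along-$t_n$ idea you mention would do it, but once you write that out (together with the uniform-radius local univalence needed to bound $|iy_1-z|$ from below whenever $h_t(iy_1)=h_t(z)$) you are essentially reproducing the paper's Claims~\ref{S2}, \ref{S4}, and \ref{S6}. In short, the paper's pairwise method sidesteps the location of $h_t(i\mathbb R)$ entirely; your approach makes that location central, and the sketch does not yet supply it.
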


\proof
The proof contains several claims. For all claims as well as other explanations, the circumstances of the theorem are always assumed.
By Lemma \ref{newlemma}, the family $(h_t)$ defined by \eqref{family} for $0 \leq t \leq \tau_0$
satisfies the chordal Loewner--Kufarev PDE \eqref{PDE5} with the Herglotz function $p(z,t)$ in \eqref{kdisk}.
In particular, $|p(z, t)| \leq K$ for $K := (1+k)/(1-k)$. 

Let $(\varphi_{s,t})$ be the evolution family associated with this Herglotz function $p$ for all $0 \leq s \leq t \leq \tau_0$. Namely, $(\varphi_{s,t})$ satisfies the chordal Loewner--Kufarev ODE:
\begin{equation}\label{p2}
\dot{\varphi}_{s,t}(z) = p(\varphi_{s,t}(z), t), \quad 0 \leq s \leq t \leq \tau_0, \quad \varphi_{s,s}(z) = z.
\end{equation}
By Theorem \ref{chord},  $\varphi_{s,t}$ is $k$-quasiconformally extendible to $\overline{\mathbb C}$ with $\varphi_{s,t}(\infty) = \infty$
for some $0<k <1$. Combining \eqref{PDE5} and \eqref{p2}, we obtain that for any fixed $z \in \mathbb H$, 
\begin{equation*}
\begin{split}
\frac{d}{dt}(h_{t} \circ \varphi_{s,t}(z)) &= h_t'(\varphi_{s,t}(z)) \dot\varphi_{s,t}(z) + \dot h_t(\varphi_{s,t}(z))\\
& = h_t'(\varphi_{s,t}(z)) \dot\varphi_{s,t}(z) - p(\varphi_{s,t}(z), t) h_t'(\varphi_{s,t}(z))\\
& = h_t'(\varphi_{s,t}(z)) (\dot\varphi_{s,t}(z) - p(\varphi_{s,t}(z), t)) = 0.
\end{split}
\end{equation*}
Thus, $h_{t} \circ \varphi_{s,t}(z)$ does not depend on $t$. By taking $t = s$ we see that $h_{t} \circ \varphi_{s,t}(z) = h_s(z)$.  
The equations $\varphi_{0,t} = \varphi_{s,t}\circ\varphi_{0,s}$ and $h_s = h_{t} \circ \varphi_{s,t}$ will be frequently used below.

We note that the univalent analytic function $h$ on $\mathbb H$ 
has a continuous and injective extension to $i\hat{\mathbb R}=i\mathbb R \cup \{\infty\}$ with $h(\infty)=\infty$
by the Carath\'eodory theorem (see \cite[p.18]{Pom}) since $h(\mathbb H)$ is a Jordan domain.

\begin{claim}\label{S1}
The value $h_t(z)$ converges to $h(z_0)$ as $z \in \mathbb H$ tends to $z_0 \in \mathbb H \cup i\hat{\mathbb R}$
and $t \in [0, \tau_0]$ tends to $0$. 
\end{claim}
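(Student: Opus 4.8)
The plan is to establish Claim~\ref{S1} by separately controlling the three ingredients in the formula~\eqref{family} for $h_t$, namely $h(z+t)$, the factor $t\,h'(z+t)$, and the denominator $1+tPh(z+t)$, as $(z,t)\to(z_0,0)$. The denominator is the easiest: by~\eqref{denom1} we have $t|Ph(z+t)|\le k/2$ uniformly for $0\le t\le t_0$ and all $z\in\mathbb H$, so $1+tPh(z+t)$ stays bounded away from $0$; this also shows the whole correction term is controlled once we bound $t\,h'(z+t)$.

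First I would treat the case $z_0\in\mathbb H$. Here everything is interior: $h$ and $h'$ are continuous at $z_0$, so $h(z+t)\to h(z_0)$ and $t\,h'(z+t)\to 0$ as $(z,t)\to(z_0,0)$, and combining with the denominator bound gives $h_t(z)\to h(z_0)$ directly. Next I would treat $z_0=\infty$. Since $h(z)\to\infty$ as $z\to\infty$ and $z+t\to\infty$, we get $h(z+t)\to\infty$; it then suffices to show the correction term stays bounded (or at least is $o(|h(z+t)|)$), which again reduces to bounding $t\,h'(z+t)$ — and for this one can use the standard distortion estimate $|h'(w)|\le C\,|h(w)|/\mathrm{Re}\,w$ valid for univalent $h$ with $h(\infty)=\infty$ on a half-plane neighborhood of $\infty$, so that $t|h'(z+t)|\le C\,t\,|h(z+t)|/\mathrm{Re}(z+t)\le C\,|h(z+t)|$; dividing shows $h_t(z)/h(z+t)$ is bounded, whence $h_t(z)\to\infty=h(\infty)$.

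The genuinely delicate case is $z_0=iy_0\in i\mathbb R$, where $z$ approaches a boundary point and $h$ need only extend continuously (not analytically) there. The strategy is to localize near $iy_0$ using a disk $D=D(1+iy_0,1)\subset\mathbb H$ tangent to $i\mathbb R$ at $iy_0$, and to combine two facts: (1) $h$ extends continuously to $i\hat{\mathbb R}$ by the Carath\'eodory theorem, so $h(z+t)\to h(iy_0)$ as $(z,t)\to(iy_0,0)$; and (2) the correction term $2t\,h'(z+t)/(1+tPh(z+t))$ tends to $0$. For (2), the denominator is bounded below as before, so I must show $t\,h'(z+t)\to 0$. Writing $w=z+t$, note $\mathrm{Re}\,w\to 0$; applying the Koebe-type estimate $|h'(w)|\,d(w,\partial\mathbb H)\le 4\,\mathrm{osc}$-type bound is not quite enough, so instead I would invoke~\eqref{Lemma4}/\eqref{Lemma40} with $D$ replaced by a shrinking boundary neighborhood: since $h$ is continuous up to $i\mathbb R$, $|h(w)-h(iy_0)|\to 0$, and $h(w)-h(iy_0)$ is analytic with vanishing boundary values at $iy_0$, so~\eqref{Lemma40} gives $|h'(w)|\,(\mathrm{Re}\,w)^2\to 0$; since $t\le\mathrm{Re}\,w$, we get $t|h'(w)|\le (\mathrm{Re}\,w)|h'(w)|=o(1/\mathrm{Re}\,w)\cdot(\mathrm{Re}\,w)$... more carefully, $t|h'(w)|\le\mathrm{Re}(w)\,|h'(w)|$ and the latter is $o(1)$ because $(\mathrm{Re}\,w)^2|h'(w)|\to0$ forces $(\mathrm{Re}\,w)|h'(w)|$ to be bounded while actually a direct Cauchy-estimate argument on $D$ shows $(\mathrm{Re}\,w)|h'(w)|\to0$ too. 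Hence the correction term vanishes and $h_t(z)\to h(iy_0)$.

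The main obstacle I anticipate is the boundary case $z_0=iy_0$: one must pass from continuity of $h$ at $iy_0$ (a weak, non-quantitative hypothesis) to the decay $t\,h'(z+t)\to 0$, and this requires carefully coupling the rate at which $z$ approaches the boundary with the rate at which $t\to0$ — the point being that $t\le\mathrm{Re}\,z$ is not automatic, so one genuinely uses $\mathrm{Re}(z+t)\ge t$ together with a Cauchy-integral oscillation estimate for $h'$ on disks of radius comparable to $\mathrm{Re}(z+t)$ centered at $z+t$. I would set this up via Lemma~\ref{ratio}-style localization inside $D=D(1+iy_0,1)$ and the implication~\eqref{Lemma40}, which is precisely the tool recorded earlier for exactly this kind of passage.
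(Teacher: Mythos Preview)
Your approach is genuinely different from the paper's: you work directly with the explicit formula~\eqref{family} and try to control the three pieces $h(z+t)$, $t\,h'(z+t)$, and $1+tPh(z+t)$ separately, whereas the paper never touches the formula at all in the proof of Claim~\ref{S1}. Instead it uses the relation $h_t=h_{\tau_0}\circ\varphi_{t,\tau_0}$ (established just before the claim from the PDE/ODE pair), together with the uniform estimate $|\varphi_{t,t'}(z)-z|\le K(t'-t)$ coming from $|p|\le K$. This lets the paper compare $h_t(z)$ with $h(z)=h_{\tau_0}\circ\varphi_{0,\tau_0}(z)$ via the H\"older continuity of the $k$-quasiconformal maps $\varphi_{t,\tau_0}$ on a fixed compact set, and then treat $z_0=\infty$ by the simple bound $|\varphi_{t,\tau_0}(z)|\ge |z|-K\tau_0$ combined with $h_{\tau_0}(\infty)=\infty$.

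For $z_0\in\mathbb H$ your argument is fine, and for finite $z_0=iy_0\in i\mathbb R$ your direct approach also works, though your write-up is tangled. The correct version is the one you end with: a Cauchy estimate on the disk $D(w,\tfrac12\mathrm{Re}\,w)$ with $w=z+t$ gives $(\mathrm{Re}\,w)|h'(w)|\le 2\sup_{|\zeta-w|=\mathrm{Re}\,w/2}|h(\zeta)-h(iy_0)|\to 0$ by the Carath\'eodory continuous extension, and $t\le\mathrm{Re}\,w$ finishes it. Your appeal to~\eqref{Lemma40} is not quite right (that implication concerns the limit along the \emph{whole} boundary, not at a single point), and the intermediate sentence ``$(\mathrm{Re}\,w)^2|h'(w)|\to0$ forces $(\mathrm{Re}\,w)|h'(w)|$ to be bounded'' is simply false; but the final Cauchy-estimate step you state is the right one.

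The real gap is the case $z_0=\infty$. Your ``standard distortion estimate'' $|h'(w)|\le C|h(w)|/\mathrm{Re}\,w$ can be obtained (after translating so that $0\notin h(\mathbb H)$) from the Koebe $\tfrac14$-theorem, but only with $C=4$. That yields $|h_t(z)-h(z+t)|\le 16\,|h(z+t)|$, so the ratio $h_t(z)/h(z+t)$ is bounded \emph{above}, not bounded away from~$0$; hence ``whence $h_t(z)\to\infty$'' does not follow. Along sequences with $\mathrm{Re}\,z_n\to 0$ and $t_n\asymp\mathrm{Re}\,z_n$ the factor $t_n/\mathrm{Re}(z_n+t_n)$ stays bounded away from~$0$, so you cannot upgrade the bound to $o(|h(z+t)|)$ either. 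One way to rescue your direct approach at $\infty$ is to invoke assumption~$\mathbf{(A)}$: uniform continuity of $h$ on $\Omega_M$ plus a Cauchy estimate on small disks gives $|h'|$ bounded near~$\infty$, hence $t|h'(z+t)|\to 0$. The paper, however, proves Claim~\ref{S1} \emph{without} using~$\mathbf{(A)}$ (it is first used only in Claim~\ref{S4}); this is exactly what the evolution-family route buys, and why your direct method, while pleasant for finite~$z_0$, is weaker at~$\infty$.
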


\begin{proof}
For any $z \in \mathbb H$ and $0 \leq t \leq t' \leq \tau_0$, 
\begin{equation}\label{basic}
\begin{split}
|\varphi_{t,t'}(z) - z| & = |\varphi_{t,t'}(z) - \varphi_{t,t}(z)|\\
& = \left|\int_{t}^{t'} \dot{\varphi}_{t,s}(z) ds\right| 
 = \left|\int_{t}^{t'} p(\varphi_{t,s}(z), s) ds\right|\\
& \leq \int_{t}^{t'} |p(\varphi_{t,s}(z), s)| ds 
 \leq K(t' - t)  
\end{split}
\end{equation}
is satisfied. This is also true for $z \in i\mathbb R$ since
each $\varphi_{t, t'}$ has the continuous extension to $i\mathbb R$. 
In particular, 
\begin{equation}\label{closetoid}
|\varphi_{0,t}(z)-z| \leq Kt
\end{equation}
for any $z \in \mathbb H \cup i\mathbb R$ and $0 \leq t \leq \tau_0$.
Let $\Omega(M):=\{z \in \mathbb H :|z| \leq M\}$ for any $M>0$. Then, 
\begin{equation}\label{bounded}
|\varphi_{t,\tau_0}(z)| \leq |\varphi_{t,\tau_0}(z) - z| + |z| \leq K\tau_0 + M 
\end{equation}
for any $z \in \Omega(M)$.
This shows that the family $\{\varphi_{t, \tau_0}\}_{t \in [0,\tau_0]}$ is uniformly bounded on 
the compact set $\overline \Omega(M)=\{z \in \mathbb H \cup i\mathbb R:|z| \leq M\}$ for any $M>0$. 

If the family of $k$-quasiconformal mappings $\varphi_{t,\tau_0}:\mathbb C \to \mathbb C$ is uniformly bounded on 
$\overline \Omega(M+K\tau_0)$, then it is also equi-continuous on $\overline \Omega(M+K\tau_0)$ (see \cite[Theorem II.4.1]{LV}), 
and thus uniformly H\"older continuous with exponent $1/K$ and with multiplicative constant $C_1>0$
(see \cite[Theorem II.4.3]{LV}). From this and \eqref{closetoid}, it follows that 
\begin{equation}\label{Holder}
\begin{split}
|\varphi_{t,\tau_0}(z) - \varphi_{0,\tau_0}(z)| &= |\varphi_{t,\tau_0}(z) - \varphi_{t,\tau_0}\circ\varphi_{0,t}(z)|\\
& \leq C_1 |z - \varphi_{0,t}(z)|^{1/K} 
\leq C_1(Kt)^{1/K} 
\end{split}
\end{equation}
for any $z \in \Omega(M)$.

Since $p(z,t)\in U(k)$ for all $z \in \mathbb H$ and all $0 \leq t \leq \tau_0$, there exists a positive constant $C_2>0$ such that
${\rm Re}\,p(z,t) \geq C_2$. It follows that
\begin{equation*}
\begin{split}
{\rm Re}\,(\varphi_{0,t}(z)) - {\rm Re}\,z &= \int_0^{t}  {\rm Re}\,(\dot{\varphi}_{0,s}(z)) ds\\
&= \int_0^{t}  {\rm Re}\,(p(\varphi_{0,s}(z), s)) ds \geq C_2t.
\end{split}
\end{equation*}
This implies that $\varphi_{0,\tau_0}(\mathbb H)$ is contained in $\{z \in \mathbb H : {\rm Re}\,z >C_2\tau_0\}$.
Combining this with \eqref{bounded} and \eqref{Holder},
we can take a compact convex subset $W$ in $\mathbb H$ such that $\varphi_{t,\tau_0}(z)$ and $\varphi_{0,\tau_0}(z)$
are contained in $W$ for all $z \in \Omega(M)$ and for all sufficiently small $t$.

Let $C_3:=\max_{w \in W}|h_{\tau_0}'(w)|<+\infty$.
Then, for any two points $w_1$, $w_2 \in W$, 
\begin{equation*}
\begin{split}
|h_{\tau_0}(w_1) - h_{\tau_0}(w_2)| & =\left |\int_0^1(h_{\tau_0}((1-t)w_1 + tw_2))_t'  dt\right|\\
& = |w_2 - w_1| \left|\int_0^1h_{\tau_0}'((1-t)w_1 + tw_2) dt\right| 
\leq 
C_3  |w_2 - w_1|, 
\end{split}
\end{equation*}
from which we deduce that
\begin{equation}\label{final}
\begin{split}
|h_{t}(z) - h(z_0)| & \leq |h_{t}(z) - h(z)|  + |h(z) - h(z_0)|\\
& = |h_{\tau_0}\circ\varphi_{t,\tau_0}(z) - h_{\tau_0}\circ\varphi_{0,\tau_0}(z)| + |h(z) - h(z_0)|\\
& \leq C_3 |\varphi_{t,\tau_0}(z) - \varphi_{0,\tau_0}(z)| + |h(z) - h(z_0)|
\end{split}
\end{equation}
if $z \in \Omega(M)$.
In the case that $z_0 \neq \infty$, by choosing $\Omega(M)$ according to $z_0$,
we see that $h_t(z) \to h(z_0)$ as $z \to z_0$
and $t \to 0$ by \eqref{Holder} and \eqref{final}.

Finally, we assume that $z_0 = \infty$.
By the equation $h_{\tau_0}\circ\varphi_{0,\tau_0} = h$, we have 
$\lim_{z \to \infty} h_{\tau_0}(z) = \lim_{z \to \infty}h\circ \varphi_{0,\tau_0}^{-1}(z) = \infty$. Moreover,
\begin{equation*}
\begin{split}
|\varphi_{t, \tau_0}(z)|  
& \geq 
|z| - |\varphi_{t, \tau_0}(z) - \varphi_{t,t}(z)|\\
&= |z| - \left|\int_{t}^{\tau_0} \dot{\varphi}_{t,s}(z) ds \right|\\
& \geq |z| - \int_{t}^{\tau_0} |p(\varphi_{t,s}(z), s)| ds \geq |z| - K\tau_0.
\end{split}
\end{equation*}
Hence,
$h_{t}(z) = h_{\tau_0}\circ\varphi_{t,\tau_0}(z) \to \infty \, (= h(\infty))$ 
as $z \to \infty$
(independent of $t$).
\end{proof}

\begin{claim}\label{S2}
There exist positive constants $\tau_1 \;(\leq \tau_0)$ and $r$ such that for any $0 \leq t \leq \tau_1$,  $h_t(z)$ is univalent on each square 
$$Q_{2r} := (0, 2r)\times(y_0-r, y_0 + r) \subset \mathbb H,$$
where $y_0 \in \mathbb R$ can be chosen arbitrarily. 
\end{claim}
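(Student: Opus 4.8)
The plan is to prove the claim \emph{locally}: bound the Schwarzian norm of $h_t$ over the small square $Q_{2r}$ and apply a Nehari-type univalence criterion there, avoiding any continuation or topological argument near $i\mathbb R$. Since a square is a quasidisk, the existence of a quasiconformal reflection across $\partial Q_{2r}$ furnishes (cf.\ \cite{Le}) a constant $c_0>0$ such that every analytic $g$ on a square $Q$ with $\sup_{z\in Q}\frac{1}{2}\rho_Q^{-2}(z)|Sg(z)|\le c_0$ is univalent on $Q$; and, because the quantity $\rho_Q^{-2}|Sg|$ is invariant under translations and dilations of $\mathbb C$, this $c_0$ may be taken the same for all the squares $Q_{2r}$, independently of $r>0$ and $y_0\in\mathbb R$. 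Thus it suffices to choose $r>0$ and $\tau_1\le\tau_0$, both independent of $y_0$, so that $\sup_{Q_{2r}}\frac{1}{2}\rho_{Q_{2r}}^{-2}|Sh_t|\le c_0$ for all $0\le t\le\tau_1$; the case $t=0$ is trivial since $h_0=h$ is univalent on $\mathbb H\supset Q_{2r}$, so I treat $0<t\le\tau_1$.

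First I would record the formula for $Sh_t$. By Lemma \ref{newlemma} the function $h_t$ of \eqref{family} is analytic on $\mathbb H$. Put $m_t:=(1+2t^2Sh)/(1+tPh)^2$, which by \eqref{denom1}--\eqref{denom2} is analytic and zero-free on $\{{\rm Re}\,z>t/2\}$; then \eqref{ht} reads $h_t'(z)=h'(z+t)\,m_t(z+t)$, so $Ph_t(z)=Ph(z+t)+v(z+t)$ with $v:=m_t'/m_t=\frac{2t^2(Sh)'}{1+2t^2Sh}-\frac{2t(Ph)'}{1+tPh}$. Using $(Ph)'=Sh+\frac{1}{2}(Ph)^2$ and $Sh=(Ph)'-\frac{1}{2}(Ph)^2$ one obtains
\[
Sh_t(z)=Sh(z+t)+v'(z+t)-Ph(z+t)\,v(z+t)-\frac{1}{2}\,v(z+t)^2 .
\]

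Next comes the estimate. Fix $z\in Q_{2r}$ and set $w:=z+t$, so $t<{\rm Re}\,w\le 2r+t$. By \eqref{denom1}--\eqref{denom2}, $|1+tPh(w)|\ge 1-\frac{k}{2}$ and $|1+2t^2Sh(w)|\ge 1-k$; let $\beta,\sigma$ be the functions of Theorem \ref{PS} for $h$. Since ${\rm Re}\,w\le 2r+\tau_1$ we have $(2{\rm Re}\,w)|Ph(w)|\le\beta(2r+\tau_1)$ and $(2{\rm Re}\,w)^2|Sh(w)|\le\sigma(2r+\tau_1)$, and, with $2r+\tau_1$ replaced by $2(2r+\tau_1)$, these bounds persist throughout the disk $D(w,{\rm Re}\,w/2)\subset\{{\rm Re}\,z>t/2\}$. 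Estimating $(Sh)'(w)$ and then $v(w)$ and $v'(w)$ by Cauchy's formula on $D(w,{\rm Re}\,w/2)$ (applied a bounded number of times) — and, crucially, using the inequality $t\le{\rm Re}\,w$ to cancel the singular factors $1/{\rm Re}\,w$ and $1/({\rm Re}\,w)^2$ that differentiation produces against the explicit factors $t$ and $t^2$ — one finds $|v(w)|\le C\varepsilon/{\rm Re}\,w$ and $|v'(w)|\le C\varepsilon/({\rm Re}\,w)^2$ with $C$ absolute and $\varepsilon:=\sigma(2(2r+\tau_1))+\beta(2(2r+\tau_1))$. Combining with $(2{\rm Re}\,w)^2|Sh(w)|\le\sigma(2r+\tau_1)$ gives $({\rm Re}\,w)^2|Sh_t(z)|\le C'\varepsilon$. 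Finally, since $Q_{2r}\subset\mathbb H$, monotonicity of the hyperbolic density gives $\rho_{Q_{2r}}^{-1}(z)\le\rho_{\mathbb H}^{-1}(z)=2{\rm Re}\,z<2{\rm Re}\,w$, hence
\[
\frac{1}{2}\rho_{Q_{2r}}^{-2}(z)\,|Sh_t(z)|\le 2\,({\rm Re}\,w)^2\,|Sh_t(z)|\le 2C'\varepsilon ,
\]
a bound that does not involve $y_0$. As $\sigma(s)\to0$ and $\beta(s)\to0$ when $s\to0^+$ (by \eqref{Slimsup2}, \eqref{Plimsup2} and Theorem \ref{PS}), I would first fix $r>0$ and then $\tau_1\le\tau_0$ small enough that $2C'\varepsilon\le c_0$; then $h_t$ is univalent on every $Q_{2r}$ for all $0\le t\le\tau_1$, as required.

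The only nontrivial point is the Schwarzian estimate. The obstacle is that $v$ and $v'$ are in general unbounded as $w\to i\mathbb R$, so that the correction terms in $Sh_t$ might a priori dominate; what removes this is the observation that every summand of $v$ carries a factor $t$ or $t^2$, which—weighed against the worst singularity produced by the Cauchy estimates—is controlled by $t\le{\rm Re}\,w$, leaving only the genuinely small quantities $\sigma$ and $\beta$ coming from the boundary vanishing. I should also stress that this estimate, and hence the conclusion, is uniform in $y_0$; in particular assumption $\mathbf{(A)}$ is \emph{not} used in this claim. It will enter only afterwards, when the local univalence of $h_t$ on the individual squares $Q_{2r}$ is assembled into univalence of $h_t$ on all of $\mathbb H$.
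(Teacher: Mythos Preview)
Your argument is correct and follows the same overall strategy as the paper: bound a conformally natural derivative norm of $h_t$ on the square $Q_{2r}$ uniformly in $y_0$ and $t$, then invoke a univalence criterion valid on quasidisks, using monotonicity $\rho_{Q_{2r}}^{-1}(z)\le 2\,{\rm Re}\,z$ to transfer half-plane estimates to $Q_{2r}$.

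The one technical difference is that the paper works with the \emph{pre}-Schwarzian rather than the Schwarzian. It first passes from the Nehari criterion to a pre-Schwarzian criterion on $Q_{2r}$ via \eqref{sup} (there exists $a>0$ such that $\sup_{Q_{2r}}\rho_{Q_{2r}}^{-1}|Pg|\le a$ forces $g$ univalent), then bounds $(2{\rm Re}\,z)|Ph_t(z)|$ directly from the expansion
\[
Ph_t(z)=Ph(z+t)+\frac{2t^2(Sh)'(z+t)}{1+2t^2Sh(z+t)}-\frac{2t(Ph)'(z+t)}{1+tPh(z+t)},
\]
invoking Proposition~\ref{alpha1} (which packages the Cauchy-type estimates you carry out by hand) to control $(2{\rm Re}\,w)^3|(Sh)'(w)|$ and $(2{\rm Re}\,w)^2|(Ph)'(w)|$. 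This avoids computing $Sh_t$ and the extra cross-terms $Ph\cdot v$ and $v^2$, so the bookkeeping is slightly lighter; on the other hand your route is self-contained and makes the role of the cancellation $t\le{\rm Re}\,w$ fully explicit. Either way, the conclusion and the choice $r=\tau_1$ (with $\tau_1$ small enough that the relevant $\beta,\sigma$ values fall below the univalence threshold) are the same.
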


Remark that the square $Q_{2r}$, as a quasidisk, satisfies the Schwarzian univalence criterion 
(see \cite[Corollary 5]{GO}, \cite[p.126]{Le}). 
Precisely, 
if a function $g$ is  locally univalent and analytic in $Q_{2r}$ and if 
$$
\sup_{z \in Q_{2r}} |Sg(z)|\rho_{Q_{2r}}^{-2}(z) \leq 1/2,
$$
then $g$ is univalent in $Q_{2r}$. Moreover, by \eqref{sup}, there exists a positive constant $a$ such that if 
$$
\sup_{z \in Q_{2r}} |Pg(z)|\rho_{Q_{2r}}^{-1}(z) \leq a,
$$
then $g$ is univalent in $Q_{2r}$. 

We also need the following result which can be deduced easily from the proof of \cite[Lemma 6.3]{Sh}.

\begin{proposition}\label{alpha1}
Let $\psi$ be an analytic function of $z=x+iy$ in $\mathbb H$ satisfying a condition $\lim_{x \to +\infty} \psi(x+iy) = 0$ 
uniformly for $y \in \mathbb R$. Then,  for any constant $\alpha > 0$, 
$$
\sup_{z \in \mathbb H} |\psi(z)|x^{\alpha}  < \infty \quad \Longleftrightarrow \quad
\sup_{z \in \mathbb H} |\psi'(z)|x^{\alpha+1} < \infty, 
$$
and 
both terms are comparable with comparison constants depending only on $\alpha$. Moreover, 
$$
\lim_{x \to 0^+} |\psi(z)| x^{\alpha} = 0 \quad \Longleftrightarrow \quad \lim_{x \to 0^+} |\psi'(z)| x^{\alpha+1} = 0.
$$
\end{proposition}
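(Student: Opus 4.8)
The plan is to compare the weighted sizes of $\psi$ and $\psi'$ by combining the Cauchy estimate on the Euclidean disk $D(z,x/2)\subset\mathbb H$ (where $x:={\rm Re}\,z$) with an integration of $\psi'$ along the horizontal ray emanating from $z$, exploiting the standing hypothesis that $\psi(x+iy)\to 0$ as $x\to+\infty$ uniformly in $y$. Throughout, the limits as $x\to 0^+$ in the "moreover" part will be understood as $\lim_{x\to 0^+}\sup_{y\in\mathbb R}(\cdots)=0$, consistently with the suprema appearing in the first equivalence.

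First I would treat the implication from $\psi$ to $\psi'$, which is exactly the Cauchy-estimate argument already recorded in \eqref{Lemma4}--\eqref{Lemma40} applied on $D=\mathbb H$, where $d(z,\partial\mathbb H)=x$. Setting $r:=x/2$, on the circle $|w-z|=r$ one has $x/2\le {\rm Re}\,w\le 3x/2$, so $|\psi(w)|\le (x/2)^{-\alpha}\sup_{0<{\rm Re}\,\zeta\le 3x/2}|\psi(\zeta)|({\rm Re}\,\zeta)^{\alpha}$; feeding this into $|\psi'(z)|\le r^{-1}\max_{|w-z|=r}|\psi(w)|$ gives $|\psi'(z)|x^{\alpha+1}\le 2^{\alpha+1}\sup_{0<{\rm Re}\,\zeta\le 3x/2}|\psi(\zeta)|({\rm Re}\,\zeta)^{\alpha}$. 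Taking the supremum over $z\in\mathbb H$ yields the forward inequality with constant $2^{\alpha+1}$, while letting $x\to 0^+$ yields the corresponding little-oh statement at no extra cost.

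For the converse I would fix $z=x+iy$ and write, under the assumption $C:=\sup_{\mathbb H}|\psi'(w)|({\rm Re}\,w)^{\alpha+1}<\infty$ (equivalently, by the first part already proved, under finiteness of $\sup_{\mathbb H}|\psi(w)|({\rm Re}\,w)^{\alpha}$), $\psi(x+iy)=-\int_{x}^{+\infty}\psi'(s+iy)\,ds$; this is legitimate because $\psi(s+iy)\to 0$ as $s\to+\infty$ and $|\psi'(s+iy)|\le C s^{-\alpha-1}$ is integrable on $[x,+\infty)$ since $\alpha>0$. Hence $|\psi(x+iy)|\le C\int_x^{+\infty}s^{-\alpha-1}\,ds=(C/\alpha)x^{-\alpha}$, which is the backward inequality with constant $1/\alpha$ and completes the comparability claim. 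For the little-oh converse, given $\eta>0$ choose $\delta\in(0,1)$ with $|\psi'(s+iy)|s^{\alpha+1}<\eta$ for all $0<s\le\delta$ and all $y$, and split $\int_x^{+\infty}=\int_x^{\delta}+\int_{\delta}^{+\infty}$ for $0<x<\delta$; this bounds $|\psi(x+iy)|x^{\alpha}$ by $\eta/\alpha+(C/\alpha)\delta^{-\alpha}x^{\alpha}$, whose $\limsup$ as $x\to 0^+$ is at most $\eta/\alpha$, and letting $\eta\to 0$ finishes the proof.

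I expect the only genuinely delicate point to be the tail $\int_{\delta}^{+\infty}\psi'(s+iy)\,ds$ in the vanishing converse: it cannot be made small by the boundary hypothesis alone and must instead be absorbed using the global bound $C<\infty$ (which, by the first equivalence, is the natural standing assumption in that part of the statement), after which the factor $x^{\alpha}$ sends the contribution to $0$ as $x\to 0^+$. Everything else reduces to a single Cauchy estimate and the fundamental theorem of calculus, with all constants depending only on $\alpha$.
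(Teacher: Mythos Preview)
Your argument is correct and is the natural one: the Cauchy estimate on $D(z,x/2)$ for the forward direction and integration of $\psi'$ along the horizontal ray $[x,+\infty)+iy$ using the uniform decay of $\psi$ at $+\infty$ for the converse. The paper does not give its own proof of this proposition but simply refers to \cite[Lemma~6.3]{Sh}, whose argument proceeds along the same lines; your constants $2^{\alpha+1}$ and $1/\alpha$ are exactly what one obtains there.

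Your remark about the delicate tail in the vanishing converse is well placed. As stated, the ``moreover'' clause is most naturally read under the standing finiteness hypothesis of the first part (and this is how it is used in the paper, where $\psi=Ph$ or $\psi=Sh$ always satisfies the global bound by \eqref{norm6}). Without that assumption one cannot control $\sup_y|\psi(\delta+iy)|$, so your reliance on $C<\infty$ for the term $\int_\delta^{+\infty}$ is not a gap but a necessary reading of the statement. One small simplification you could make: since $\psi(R+iy)\to 0$, the tail integral equals $-\psi(\delta+iy)$ exactly, so the splitting gives $|\psi(x+iy)|x^\alpha\le \eta/\alpha+|\psi(\delta+iy)|\,x^\alpha$, making the role of the global bound (namely, uniform boundedness of $|\psi(\delta+iy)|$) completely transparent.
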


\begin{proof}[Proof of Claim \ref{S2}]
By \eqref{denom2} and \eqref{ht}, 
we see that  $h'_t(z) \neq 0$ for all $z \in \mathbb H$ and all $0 \leq t \leq \tau_0$. 
A direct computation yields that
\begin{equation*}
\begin{split}
Ph_t(z) &:= \frac{h_t''(z)}{h_t'(z)} = (\log h_t'(z))' \\
&= Ph(z+t) + \frac{2t^2 \left(Sh(z+t) \right)'}{1 + 2t^2 Sh(z+t)} - \frac{2t\left(Ph(z+t) \right)'}{1 + t Ph(z+t)}. 
\end{split}
\end{equation*}
From this we have 
\begin{equation*}\label{zt}
\begin{split}
(2{\rm Re} z) \left|Ph_t(z)\right| & \leq  (2{\rm Re}(z+t)) |Ph(z+t)| \\
&\quad + \frac{(2{\rm Re}(z+t))^3 |(Sh(z+t))'|}{1 - (2{\rm Re}(z+t))^2 |Sh(z+t)|} + \frac{(2{\rm Re}(z+t))^2 |(Ph(z+t))'|}{1 - (2{\rm Re}(z+t)) |Ph(z+t)|}.
\end{split}
\end{equation*}
Then, applying \eqref{Slimsup2} and \eqref{Plimsup2}, 
we conclude by Proposition \ref{alpha1} that there exists a positive constant $\tau_1 \;(\leq \tau_0)$ such that 
\begin{equation*}\label{c0}
(2{\rm Re} z) \left|Ph_t(z)\right| \leq a
\end{equation*}
for any $z \in \mathbb H$ and $t \geq 0$ with $0 < {\rm Re}(z+t) \leq 3\tau_1$. In particular, it holds for all $z \in \mathbb H$ with $0 < {\rm Re}\, z \leq 2r$ and for all $0 \leq t \leq \tau_1$ by taking $r = \tau_1$.  
We fix this constant $r>0$.

By the monotone principle of hyperbolic densities with respect to the inclusion of
domains (see \cite[p.6]{Le}), we have 
$$
\rho_{Q_{2r}}^{-1}(z) \left| Ph_t(z) \right| \leq (2 {\rm Re} z) \left| Ph_t(z) \right| \leq a 
$$
for all $z \in Q_{2r}$, 
which implies that $h_t$ is univalent in each $Q_{2r}$ for any $0 \leq t \leq \tau_1$. 
\end{proof}

\begin{claim}\label{S3}
There exists a positive constant $\tau_2 \;(\leq \tau_0)$ such that for each $0 \leq t \leq \tau_2$, 
$h_t$ is univalent in the half-plane
$
\mathbb H_{r} := \{z \in \mathbb H: {\rm Re} z > r\},
$
where $r$ is the constant chosen in Claim \ref{S2}.
\end{claim}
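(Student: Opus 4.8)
The plan is to deduce the univalence of $h_t$ on $\mathbb H_r$ from the univalence of $h$ itself, by means of the functional identity already established in the proof of Theorem~\ref{univalentA}. There one shows $h_s=h_t\circ\varphi_{s,t}$ on $\mathbb H$ for all $0\le s\le t\le\tau_0$; specializing to $s=0$ gives $h=h_t\circ\varphi_{0,t}$ on $\mathbb H$. Since each $\varphi_{0,t}$ is univalent on $\mathbb H$, on its image domain $\varphi_{0,t}(\mathbb H)$ this reads $h_t=h\circ\varphi_{0,t}^{-1}$, a composition of two injective maps; hence $h_t$ is injective on $\varphi_{0,t}(\mathbb H)$ and, being analytic there, univalent on $\varphi_{0,t}(\mathbb H)$. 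Therefore it is enough to produce a constant $\tau_2\in(0,\tau_0]$ with $\mathbb H_r\subset\varphi_{0,t}(\mathbb H)$ for every $0\le t\le\tau_2$.

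For this inclusion I would use the boundary control of the evolution family. By Theorem~\ref{chord}(iii), invoked already in the setup of Theorem~\ref{univalentA}, $\varphi_{0,t}$ extends to a $k$-quasiconformal homeomorphism of $\overline{\mathbb C}$ fixing $\infty$, so $\varphi_{0,t}(\mathbb H)$ is a Jordan domain in $\overline{\mathbb C}$ with boundary $\varphi_{0,t}(i\mathbb R)\cup\{\infty\}$. From \eqref{closetoid} we have $|\varphi_{0,t}(iy)-iy|\le Kt$ for all $iy\in i\mathbb R$, and since $\varphi_{0,t}$ carries $\mathbb H$ into $\mathbb H$ this forces $\varphi_{0,t}(i\mathbb R)\subset\{\,0\le{\rm Re}\,z\le Kt\,\}$. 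Writing $\mathbb H_{Kt}:=\{z\in\mathbb H:{\rm Re}\,z>Kt\}$, this set is connected, disjoint from $\partial\varphi_{0,t}(\mathbb H)$, and meets $\varphi_{0,t}(\mathbb H)$ (it contains $\varphi_{0,t}(x)$ for any real $x>2Kt$, again by \eqref{closetoid}); a connected set that avoids the boundary of $\varphi_{0,t}(\mathbb H)$ and meets $\varphi_{0,t}(\mathbb H)$ lies inside it, so $\mathbb H_{Kt}\subset\varphi_{0,t}(\mathbb H)$. As $K=(1+k)/(1-k)\ge 1$, setting $\tau_2:=r/K\ (\le r=\tau_1\le\tau_0)$ yields $\mathbb H_r\subset\mathbb H_{Kt}\subset\varphi_{0,t}(\mathbb H)$ for all $0\le t\le\tau_2$, which completes the argument.

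The one substantive step is the inclusion $\mathbb H_r\subset\varphi_{0,t}(\mathbb H)$: it is precisely here that the quasiconformal extension of $\varphi_{0,t}$ across $i\mathbb R$, and the resulting uniform estimate \eqref{closetoid} on the line, are used, so I expect this to be the main (though not deep) obstacle. The mechanism is inherently restricted to regions bounded away from $i\mathbb R$, because $\varphi_{0,t}(\mathbb H)$ never contains a full neighbourhood of any boundary point of $\mathbb H$; this is exactly why the strip adjacent to $i\mathbb R$ cannot be treated by this composition trick and is instead handled by the Schwarzian univalence criterion in Claim~\ref{S2}. One could alternatively attempt to prove Claim~\ref{S3} by a stability-of-injectivity argument — $h_t\to h$ uniformly on the compact set $\overline{\mathbb H_r}\cup\{\infty\}$ together with an argument-principle comparison — but the identity $h=h_t\circ\varphi_{0,t}$ renders this detour unnecessary, and the only remaining work is the elementary topology above and the bookkeeping $\tau_2\le\tau_0$.
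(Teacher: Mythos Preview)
Your argument is correct and follows essentially the same route as the paper: both use the identity $h=h_t\circ\varphi_{0,t}$ to reduce univalence of $h_t$ on $\mathbb H_r$ to the inclusion $\mathbb H_r\subset\varphi_{0,t}(\mathbb H)$, and both derive that inclusion from the uniform bound $|\varphi_{0,t}(z)-z|\le Kt$ coming from \eqref{basic}. The paper states the inclusion as a direct consequence of uniform convergence $\varphi_{0,t}\to{\rm id}_{\mathbb H}$ without spelling out the topology, whereas you make the boundary/connectedness step explicit and produce the concrete value $\tau_2=r/K$; this is just additional detail, not a different idea.
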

\begin{proof}
Since $h$ and $\varphi_{0,t}$ are univalent in $\mathbb H$ for any $0 \leq t \leq \tau_0$ and since $h_t\circ\varphi_{0,t} = h$, 
we see that $h_t$ is univalent in the domain $\varphi_{0,t}(\mathbb H)$. Here, \eqref{basic} in particular
implies that $\varphi_{0,t}$ converges to ${\rm id}_{\mathbb H}$
uniformly with respect to $z \in \mathbb H$ as $t \to 0$. Then, there exists a positive constant $\tau_2 \;(\leq \tau_0)$ such that 
$\mathbb H_{r} \subset \varphi_{0,t}(\mathbb H)$ for each $0 \leq t \leq \tau_2$, and thus $h_t$ is univalent in $\mathbb H_{r}$. 
\end{proof}

Let us recall the well-known Koebe distortion theorem (see \cite[Theorem 1.3]{Pom}). It says that if $g$ is analytic and univalent in $\mathbb D$ then for all $z \in \mathbb D$ it holds that
$$
|g'(0)|\frac{|z|}{(1+|z|)^2} \leq |g(z) - g(0)| \leq |g'(0)| \frac{|z|}{(1 - |z|)^2}.
$$
Applying it to an analytic and univalent function $g$ on a closed disk $\overline D(z_0+t, t)$
with center $z_0+t$ and radius $t$ by translation and dilation, we have that for all $z \in \overline{\mathbb D}$, 
$$
t|g'(z_0+t)|\frac{|z|}{(1+|z|)^2} \leq |g(tz+(z_0+t)) - g(z_0+t)| \leq t|g'(z_0+t)| \frac{|z|}{(1 - |z|)^2}.
$$
In particular, the left inequality for $z = -1$ yields that
\begin{equation}\label{distortion}
|g(z_0) - g(z_0 + t)| \geq \frac{t}{4}|g'(z_0+t)|.
\end{equation}

\begin{claim}\label{S4}
Suppose that assumption $\mathbf{(A)}$ holds. Namely, $h$ and $h^{-1}$ are uniformly continuous in 
$\Omega_M := \{z \in \mathbb H: |z| > M \}$ for some $M > 0$. Then, there exist positive constants $\tau_3\; (< \tau_0)$ and
$R > M$
such that if $z_1, z_2 \in \Omega_R$ with $|z_1 - z_2| \geq r$, then $h_t(z_1) \neq h_t(z_2)$ for all $0 \leq t \leq \tau_3$,
where $r$ is the constant chosen in Claim \ref{S2}.
\end{claim}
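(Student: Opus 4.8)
The plan is to produce a quantitative separation --- far-apart points of a neighbourhood of $\infty$ have $h$-images bounded apart by a fixed $\delta>0$ --- and then to show that $h_t$ differs from $h$ by less than $\delta/3$ uniformly on such a neighbourhood once $t$ is small, so that the triangle inequality finishes the argument. The whole argument stays at the level of the explicit formula \eqref{family} and does not need the evolution family.

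\emph{Step 1 (separation from $\mathbf{(A)}$).} Enlarging $M$ if necessary, I would fix $M>0$ so that $h$ is uniformly continuous on $\Omega_M$ with modulus of continuity $\omega$, and $h^{-1}$ is uniformly continuous on the neighbourhood $h(\Omega_M)$ of $\infty$ with nondecreasing modulus $\omega^{*}$. Choose $\delta>0$ with $\omega^{*}(\delta)<r$, where $r$ is the constant from Claim \ref{S2}. Then for any $z_1,z_2\in\Omega_M$ with $|z_1-z_2|\ge r$ one has $|h(z_1)-h(z_2)|\ge\delta$, since otherwise $r\le|z_1-z_2|=|h^{-1}(h(z_1))-h^{-1}(h(z_2))|\le\omega^{*}(\delta)<r$, a contradiction.

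\emph{Step 2 (uniform control of $h_t-h$ near $\infty$).} Pick $R>M$ large enough that $z+t\in\Omega_M$ whenever $z\in\Omega_R$ and $0\le t\le\tau_0$ (e.g.\ $R=M+\tau_0+1$). From \eqref{family},
\[
h_t(z)-h(z)=\bigl(h(z+t)-h(z)\bigr)-\frac{2t\,h'(z+t)}{1+t\,Ph(z+t)}.
\]
For $z\in\Omega_R$ and $0\le t\le\tau_0$ the first term is at most $\omega(t)$, since $z,z+t\in\Omega_M$ and $|z-(z+t)|=t$. For the second term, \eqref{denom1} gives $|1+t\,Ph(z+t)|\ge 1-k/2>\tfrac12$, while the Koebe estimate \eqref{distortion} applied to the univalent function $h$ on the disc $\overline{D}(z+t,t)\subset\mathbb H$ gives $t|h'(z+t)|\le 4|h(z)-h(z+t)|\le 4\,\omega(t)$; hence the second term is at most $16\,\omega(t)$. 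Therefore
\[
|h_t(z)-h(z)|\le 17\,\omega(t)\qquad\text{for all }z\in\Omega_R,\ 0\le t\le\tau_0,
\]
and the right-hand side tends to $0$ as $t\to 0$.

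\emph{Step 3 (conclusion) and the main difficulty.} Choose $\tau_3\in(0,\tau_0)$ with $17\,\omega(\tau_3)<\delta/3$. For $z_1,z_2\in\Omega_R$ with $|z_1-z_2|\ge r$ and $0\le t\le\tau_3$, using $\Omega_R\subset\Omega_M$ and the triangle inequality,
\[
|h_t(z_1)-h_t(z_2)|\ge|h(z_1)-h(z_2)|-|h_t(z_1)-h(z_1)|-|h_t(z_2)-h(z_2)|\ge\delta-\tfrac{2\delta}{3}=\tfrac{\delta}{3}>0,
\]
so $h_t(z_1)\ne h_t(z_2)$, which proves the claim with this $R$ and $\tau_3$. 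The step I expect to be the crux is the bound $t|h'(z+t)|\le 4\,\omega(t)$: this is exactly the point at which the a priori hypothesis $\mathbf{(A)}$ on the behaviour of $h$ at $\infty$ (uniform continuity, hence controlled expansion) is converted, via the Koebe distortion theorem applied on discs lying interior to $\mathbb H$, into a genuine pointwise estimate on the correction term in \eqref{family}; once this is in hand, the rest is routine bookkeeping with the moduli of continuity.
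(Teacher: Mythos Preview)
Your argument is correct and follows essentially the same route as the paper's proof: the uniform continuity of $h^{-1}$ gives the separation $|h(z_1)-h(z_2)|\geq\delta$ whenever $|z_1-z_2|\geq r$, the Koebe estimate \eqref{distortion} together with the uniform continuity of $h$ converts this into the bound $t|h'(z+t)|\leq 4\omega(t)$, and \eqref{denom1} handles the denominator. The only cosmetic difference is that you first package these ingredients into a uniform bound $|h_t(z)-h(z)|\leq 17\omega(t)$ on $\Omega_R$ and then apply the triangle inequality, whereas the paper bounds $|h_t(z_1)-h_t(z_2)|$ directly from below against $|h(z_1+t)-h(z_2+t)|$; the estimates involved are identical.
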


\begin{proof}
Let $\zeta_1 = h(z_1)$ and $\zeta_2 = h(z_2)$. As $h(\infty) = \infty$, there is a constant $R > M$ such that
if $z_1, z_2 \in \Omega_{R}$ then $\zeta_1, \zeta_2 \in \Omega_M$.
Since $h^{-1}$ is uniformly continuous in $\Omega_M$, for the given constant $r$, there exists some $\delta > 0$ such that
$$
|\zeta_1 - \zeta_2| < \delta \quad \Longrightarrow \quad |z_1 - z_2| < r,
$$
or equivalently,
\begin{equation}\label{unifcont1}
|z_1 - z_2| \geq r \quad \Longrightarrow \quad |\zeta_1 - \zeta_2| \geq \delta 
\end{equation}
for $z_1, z_2 \in \Omega_{R}$.
Moreover, since $h$ is uniformly continuous in $\Omega_{R} \subset \Omega_M$,
for this $\delta$, there exists a positive constant $\tau_3\; (< \tau_0)$ such that
\begin{equation}\label{unifcont2}
|z_1 - z_2| \leq \tau_3 \quad \Longrightarrow \quad |\zeta_1 - \zeta_2| \leq \delta/64
\end{equation}
for $z_1, z_2 \in \Omega_{R}$.

Assume that $z_1, z_2 \in \Omega_R$ with $|z_1 - z_2| \geq r$ and $0 \leq t \leq \tau_3$. By using \eqref{denom1}, 
\eqref{distortion}, \eqref{unifcont1}, and \eqref{unifcont2} in this order, we have
\begin{equation*}
\begin{split}
|h_t(z_1) - h_t(z_2)| & = \left|h(z_1 + t) - h(z_2 + t) -\frac{2t h'(z_1 + t)}{1 + t Ph(z_1 + t)} + \frac{2t h'(z_2 + t)}{1 + t Ph(z_2 + t)}\right|\\
& \geq |h(z_1 + t) - h(z_2 + t)| -4t |h'(z_1 + t)| -4t |h'(z_2 + t)|\\
& \geq |h(z_1 + t) - h(z_2 + t)| - 16 |h(z_1 + t) - h(z_1)| - 16 |h(z_2 + t) - h(z_2)|\\
& \geq \delta - \delta/4 - \delta/4 >0.
\end{split}
\end{equation*}
This proves that $h_t(z_1) \neq h_t(z_2)$.
\end{proof}

\begin{claim}\label{S6}
There exists a positive constant $\tau_4 \;(\leq \tau_0)$ such that if $|z_1 - z_2| \geq r$ and if either $|z_1| \leq R$ or $|z_2| \leq R$, then $h_t(z_1) \neq h_t(z_2)$ for all $0 \leq t \leq \tau_4$. Here, $r$ is the constant chosen in Claim \ref{S2} and
$R$ is the constant chosen in Claim \ref{S4}.
\end{claim}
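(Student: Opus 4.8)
The plan is to establish Claim \ref{S6} by a compactness argument, reasoning by contradiction and invoking the joint convergence $h_t(z)\to h(z_0)$ furnished by Claim \ref{S1} together with the injectivity of the continuous extension of $h$ to $i\hat{\mathbb R}$ recorded just before Claim \ref{S1}.

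Suppose no such $\tau_4>0$ exists. Then for every $n\in\mathbb N$ there are $t_n\in[0,1/n]$ and points $z_1^{(n)},z_2^{(n)}\in\mathbb H$ with $|z_1^{(n)}-z_2^{(n)}|\geq r$ and $h_{t_n}(z_1^{(n)})=h_{t_n}(z_2^{(n)})$, and, after relabelling if necessary, $|z_1^{(n)}|\leq R$ (this relabelling is harmless, since the desired conclusion is symmetric in $z_1,z_2$). First I would extract convergent subsequences. Since the closure of $\{z\in\mathbb H:|z|\leq R\}$ equals the compact set $\overline\Omega(R)=\{z\in\mathbb H\cup i\mathbb R:|z|\leq R\}$, after passing to a subsequence we may assume $z_1^{(n)}\to z_1^{*}$ with $z_1^{*}\in\mathbb H\cup i\mathbb R$. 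Passing to a further subsequence and using the compactness of $\overline{\mathbb C}$, we may also assume $z_2^{(n)}\to z_2^{*}$, the limit lying in the closure of $\mathbb H$ in $\overline{\mathbb C}$, namely $z_2^{*}\in\mathbb H\cup i\mathbb R$ (finite case) or $z_2^{*}=\infty$. In either case $z_1^{*}\neq z_2^{*}$: if $z_2^{*}$ is finite then $|z_1^{*}-z_2^{*}|=\lim_n|z_1^{(n)}-z_2^{(n)}|\geq r>0$, while if $z_2^{*}=\infty$ this is clear because $z_1^{*}$ is finite.

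Next I would pass to the limit via Claim \ref{S1}. Since $t_n\to 0$ and $z_i^{(n)}\to z_i^{*}\in\mathbb H\cup i\hat{\mathbb R}$ for $i=1,2$, Claim \ref{S1} gives $h_{t_n}(z_i^{(n)})\to h(z_i^{*})$ with the convention $h(\infty)=\infty$; concretely, when $z_2^{*}=\infty$ one has $h_{t_n}(z_2^{(n)})\to\infty$ whereas $h_{t_n}(z_1^{(n)})\to h(z_1^{*})$ stays finite. The standing identity $h_{t_n}(z_1^{(n)})=h_{t_n}(z_2^{(n)})$ then forces $h(z_1^{*})=h(z_2^{*})$. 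But $h$ extends to a continuous and injective map of $\mathbb H\cup i\hat{\mathbb R}$ onto the closure of the Jordan domain $h(\mathbb H)$, so $z_1^{*}\neq z_2^{*}$ yields $h(z_1^{*})\neq h(z_2^{*})$, a contradiction. Hence some $\tau_4\in(0,\tau_0]$ works.

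I do not anticipate a genuine difficulty in this claim; it is essentially a normal-families/compactness argument, with the substantive work already carried out in Claim \ref{S1}. The only two points that demand a little care are: (i) Claim \ref{S1} must be invoked with limit points allowed to lie on the boundary $i\mathbb R$, which is precisely the range of $z_0$ in that claim; and (ii) the separate treatment of the escape $z_2^{(n)}\to\infty$, where the $z_0=\infty$ case of Claim \ref{S1} is needed so that $h_{t_n}(z_2^{(n)})$ is driven off to $\infty$ and thus away from the finite limit $h(z_1^{*})$.
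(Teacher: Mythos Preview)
Your proof is correct and follows essentially the same contradiction--compactness argument as the paper: negate the claim, extract convergent subsequences in $\mathbb H\cup i\hat{\mathbb R}$, invoke Claim~\ref{S1} to pass to the limit, and contradict the injectivity of $h$ on $\mathbb H\cup i\hat{\mathbb R}$. Your version is in fact slightly more explicit than the paper's in handling the case $z_2^{*}=\infty$ and in justifying $z_1^{*}\neq z_2^{*}$ via the relabelling $|z_1^{(n)}|\leq R$.
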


\begin{proof}
Suppose that there is no such $\tau_4$. 
Then, there exist a sequence $\{t_n\}$ with $t_n \to 0$ as $n \to \infty$, and sequences $\{z_{1n}\}$, $\{z_{2n}\}$ in $\mathbb H$ 
satisfying $|z_{1n} - z_{2n}| \geq r$ and either
$|z_{1n}| \leq R$ or $|z_{2n}| \leq R$, but $h_{t_n}(z_{1n}) = h_{t_n}(z_{2n})$ for all $n$.
We may assume that $\{z_{1n}\}$ and $\{z_{2n}\}$ converge to some points $z_1$ and $z_2$ in $\mathbb H \cup i\hat{\mathbb R}$ 
respectively by passing to subsequences.

By Claim \ref{S1}, $h_{t_n}(z_{1n}) \to h(z_1)$ and $h_{t_n}(z_{2n}) \to h(z_2)$ as $n \to \infty$. Since 
$h_{t_n}(z_{1n}) = h_{t_n}(z_{2n})$, we have 
$h(z_1) = h(z_2)$. However, both the conditions that $|z_{1n} - z_{2n}| \geq r$ and that either
$|z_{1n}| \leq R$ or $|z_{2n}| \leq R$ imply that
$z_1 \neq z_2$. This contradicts the fact that $h$ is injective in $\mathbb H\cup i\hat{\mathbb R}$. 
\end{proof}

We will finish the proof.
Set $\tau := \min\{ \tau_1, \tau_2, \tau_3, \tau_4\}>0$.
Let $r$ be the constant chosen in Claim \ref{S2}, and
$R$ the constant chosen in Claim \ref{S4}.
We show that $h_t$ is univalent in $\mathbb H$ for
$0 \leq t \leq \tau$.
For any distinct points $z_1$ and $z_2$ in $\mathbb H$,
one of the following three cases occurs:
\begin{itemize} 
\item
If $|z_1 - z_2| < r$, then $h_t(z_1) \neq h_t(z_2)$ by Claims \ref{S2} and \ref{S3}.
\item
If $|z_1 - z_2| \geq r$ and if both $|z_1| > R$ and $|z_2| > R$, then $h_t(z_1) \neq h_t(z_2)$ by Claim \ref{S4}.
\item 
If $|z_1 - z_2| \geq r$ and if either $|z_1| \leq R$ or $|z_2| \leq R$, then $h_t(z_1) \neq h_t(z_2)$ by Claim \ref{S6}.
\end{itemize}
In any case, we have $h_t(z_1) \neq h_t(z_2)$ and hence $h_t$ is injective in $\mathbb H$.
This completes the proof of Theorem \ref{univalentA}.
\qed
\medskip

Next, we prove the univalence of $h_t$ under assumption $\mathbf{(B)}$.

\begin{theorem}\label{univalentB}
Let $h$ be univalent and analytic in $\mathbb H$ with $\lim_{z \to \infty}h(z) = \infty$ and let $h(\mathbb H)$ be a Jordan domain. Let $h$ satisfy \eqref{Slimsup2}.
Suppose $\mathbf{(B)}$: $h$ is locally quasiconformally extendible to a neighborhood of $\infty$. 
Then, there is a positive constant $\tau \leq \tau_0$ such that the analytic function $h_t$ on $\mathbb H$
defined by \eqref{family} is univalent for all $0 \leq t \leq \tau$.
\end{theorem}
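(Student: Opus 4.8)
The proof follows the same outline as that of Theorem \ref{univalentA}. By Lemma \ref{newlemma} and Theorem \ref{chord}, $(h_t)_{0\le t\le\tau_0}$ satisfies the chordal Loewner--Kufarev PDE with a Herglotz function $p$ valued in $U(k)$, the associated evolution family $(\varphi_{s,t})$ consists of maps that are $k$-quasiconformally extendible to $\overline{\mathbb C}$ with $\varphi_{s,t}(\infty)=\infty$, and $h_t\circ\varphi_{s,t}=h_s$. The proofs of Claims \ref{S1}, \ref{S2}, \ref{S3} and of Claim \ref{S6} never invoke assumption $\mathbf{(A)}$, so they remain valid verbatim here; in particular there are $r,\tau_1,\tau_2>0$ with $h_t$ univalent on every square $Q_{2r}$ for $0\le t\le\tau_1$ and on $\mathbb H_r$ for $0\le t\le\tau_2$, one has $h_t(z)\to h(z_0)$ as $z\to z_0\in\mathbb H\cup i\hat{\mathbb R}$ and $t\to 0$, and for every $R>0$ there is $\tau_4>0$ such that $h_t(z_1)\ne h_t(z_2)$ whenever $|z_1-z_2|\ge r$, $\min\{|z_1|,|z_2|\}\le R$ and $0\le t\le\tau_4$. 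Just as in the closing three-case argument of Theorem \ref{univalentA}, everything then reduces to the following analogue of Claim \ref{S4}, which is the sole place where $\mathbf{(B)}$ enters:
\emph{there exist $R>0$ and $\tau_3\le\tau_0$ such that $h_t$ is injective on $\Omega_R=\{z\in\mathbb H:|z|>R\}$ for all $0\le t\le\tau_3$.}

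The plan is to establish this by contradiction, extracting from a hypothetical failure a genuinely univalent limit map that is forced to identify two distinct points. If the claim fails there are $t_n\to 0^+$ and $z_{1,n}\ne z_{2,n}$ with $|z_{1,n}|,|z_{2,n}|\to\infty$, $|z_{1,n}-z_{2,n}|\ge r$, and $h_{t_n}(z_{1,n})=h_{t_n}(z_{2,n})$; since $\varphi_{0,t_n}(\mathbb H)\supset\{{\rm Re}\,z>Kt_n\}$ (a consequence of $|\varphi_{0,t_n}(z)-z|\le Kt_n$ on $\overline{\mathbb H}$, $K=(1+k)/(1-k)$, from Claim \ref{S1}) and $h_{t_n}=h\circ\varphi_{0,t_n}^{-1}$ is injective there, we may assume ${\rm Re}\,z_{2,n}\to 0$, i.e.\ $z_{2,n}$ tends to $i\mathbb R$. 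Next I move $\infty$ to $0$ by the inversion $\iota(z)=1/z$, which turns the non-compact neighbourhood of $\infty$ into a genuine neighbourhood of $0$: put $g=\iota\circ h\circ\iota$ and $g_t=\iota\circ h_t\circ\iota$, so $g$ is conformal near $0$ in $\mathbb H$ with $g(0)=0$ and---by $\mathbf{(B)}$---extends to an $\eta$-quasisymmetric homeomorphism of a disk about $0$, while $g_t$ is analytic near $0$ in $\mathbb H$ and $g_{t_n}(\iota z_{1,n})=g_{t_n}(\iota z_{2,n})$. Rescaling the $g_{t_n}$ about $\iota z_{2,n}$---with the scale chosen so that, the conformal distortion of $\iota$ near $\iota z_{2,n}$ and the scaling-down of the squares $Q_{2r}$ near $\infty$ cancelling, the images of the $Q_{2r}$ acquire definite size---one obtains, along a subsequence, normalized maps $F_n$ whose domains exhaust a half-plane together with fixed half-disks at the images $0$ and $\xi_*$ ($\xi_*\ne 0$, $\xi_*\in\overline{\mathbb H}$) of $\iota z_{2,n}$ and $\iota z_{1,n}$; the $F_n$ converge locally uniformly to a limit $F_*$ that is univalent on the half-plane, extends univalently across the half-disks, and has $F_*(0)=0=F_*(\xi_*)$, which contradicts the univalence of $F_*$ (immediately if $\xi_*$ is interior, and via injectivity of $F_*$ on the relevant boundary arcs if $\xi_*\in\partial\mathbb H$).

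The hard part will be the normality of the rescaled family and the univalence of its limit. A priori $h_t$---hence $g_t$---is known univalent only on $\varphi_{0,t}(\mathbb H)$ and on the \emph{bounded} squares $Q_{2r}$, never on a full one-sided neighbourhood of the boundary point $\infty$; the uncontrolled set is the thin layer $\mathbb H\setminus\varphi_{0,t}(\mathbb H)\subset\{0<{\rm Re}\,z\le Kt\}$ along $i\mathbb R$, and it is precisely there that, absent $\mathbf{(B)}$, the boundary curve $h(i\mathbb R)$ could behave like a cusp at $\infty$ (as in Corollary \ref{example}) and spoil injectivity in the limit. Assumption $\mathbf{(B)}$ enters exactly here: after the inversion it makes $g$ quasisymmetric near $0$, i.e.\ its boundary curve a quasiarc there, and composing this with the global $k$-quasiconformal, uniformly-close-to-the-identity extensions of $\varphi_{0,t_n}^{-1}$ yields uniform distortion bounds on the $g_{t_n}$ in a neighbourhood of $\iota z_{1,n},\iota z_{2,n}$, which simultaneously give the equicontinuity needed for normality and prevent the limit from degenerating; Koebe's distortion theorem then transports these bounds from the normalized interior piece to the boundary half-disks, pinning down $F_*(0)=F_*(\xi_*)=0$. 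Once the analogue of Claim \ref{S4} is in hand, taking $\tau:=\min\{\tau_1,\tau_2,\tau_3,\tau_4\}$ and running the three-case argument of Theorem \ref{univalentA} word for word shows $h_t$ is univalent in $\mathbb H$ for all $0\le t\le\tau$.
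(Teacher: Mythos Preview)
Your overall structure is correct: Claims \ref{S1}--\ref{S3} and \ref{S6} carry over unchanged, and only the analogue of Claim \ref{S4} requires assumption $\mathbf{(B)}$. However, your proposed proof of that analogue via inversion, rescaling, and a compactness/normality argument is only an outline, and you yourself flag the normality of the rescaled family and the univalence of its limit as ``the hard part''; the details (the precise choice of scale, why the rescaled maps form a normal family, why the limit $F_*$ is genuinely univalent on a domain containing both $0$ and $\xi_*$, and how the boundary case $\xi_*\in\partial\mathbb H$ is handled) are not carried out.

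The paper's route is considerably simpler and avoids all of this. It extends $h$ quasiconformally to a global homeomorphism $F$ of $\mathbb C$ and uses that $F^{-1}$ is $\eta$-quasisymmetric directly: for $z_1,z_2\in\Omega_R$ with $|z_1-z_2|\ge r$ one gets
\[
\frac{|h(z_1+t)-h(z_2+t)|}{|h(z_1)-h(z_1+t)|}\;\ge\;\lambda\!\left(\frac{|z_1-z_2|}{t}\right)\;\ge\;\lambda\!\left(\frac{r}{t}\right),\qquad \lambda:=\eta^{-1},
\]
and combining this with the Koebe bound $|h(z_1)-h(z_1+t)|\ge\tfrac{t}{4}|h'(z_1+t)|$ and the explicit formula \eqref{family} for $h_t$ yields
\[
|h_t(z_1)-h_t(z_2)|\;\ge\;\frac{t}{4}\,|h'(z_1+t)|\Bigl(\lambda(r/t)-32\Bigr),
\]
which is positive for all $t$ small enough that $\lambda(r/t)>32$. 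This direct quantitative estimate replaces your entire limiting scheme; quasisymmetry is applied once, to the triple $(z_1+t,\,z_2+t,\,z_1)$, rather than being fed into a rescaling argument.

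A smaller point: you state the replacement claim as full injectivity of $h_t$ on $\Omega_R$, but then in the contradiction you impose $|z_{1,n}-z_{2,n}|\ge r$, which does not follow from the negation of full injectivity. The claim should be stated (as the paper's Claim \ref{S5} does) only for pairs at distance $\ge r$; the close-pair case is already covered by Claims \ref{S2} and \ref{S3}.
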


\begin{proof}
In the proof of Theorem \ref{univalentA}, assumption $\mathbf{(A)}$ is only used in Claim \ref{S4}; 
the other claims are applicable also to 
the proof of Theorem \ref{univalentB}. Hence, it is enough to show the following parallel result, Claim \ref{S5}, to Claim \ref{S4} and 
reset $\tau := \min\{ \tau_1, \tau_2, \tau_4, \tau_5\}$ and $R$, where $\tau_5$ and $R$ are determined there. 
\end{proof}

\begin{claim}\label{S5}
Suppose that assumption $\mathbf{(B)}$ holds. Namely, $h$ has a quasiconformal extension to  $D_{M} := \{z \in \mathbb C:|z| > M \}$ for some $M > 0$. Then, there exist positive constant $\tau_5 \; (\leq \tau_0)$ and $R > M$
such that if $z_1, z_2 \in \Omega_R$ with $|z_1 - z_2| \geq r$, then $h_t(z_1) \neq h_t(z_2)$ for all $0 \leq t \leq \tau_5$,
where $r$ is the constant chosen in Claim \ref{S2}.
\end{claim}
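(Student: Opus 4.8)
The plan is to keep the whole scaffolding of the proof of Claim \ref{S4} and to replace only the single place where the uniform continuity of $h$ and $h^{-1}$ on $\Omega_M$ was used, feeding in instead a \emph{scale-invariant} distortion estimate furnished by the quasiconformal extension hypothesis $\mathbf{(B)}$. The reason this substitution is unavoidable is that $\Omega_M$ is unbounded: a quasiconformal (hence only locally uniformly continuous) extension near $\infty$ does not give Euclidean uniform continuity on $\Omega_M$, and, as Corollary \ref{example} shows, without some hypothesis of this type the conclusion fails altogether.

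First I would upgrade $\mathbf{(B)}$ to a global statement. After replacing $M$ by a larger value, $h$ is quasiconformal on an open neighbourhood of $\overline{D_M}$ in $\overline{\mathbb C}$, so it carries the bounding circle $\partial D_M$ onto a quasicircle; hence $h|_{D_M}$ extends to a quasiconformal self-map $H$ of $\overline{\mathbb C}$ with $H(\infty) = \infty$ (recall $h(z) \to \infty$ as $z \to \infty$) and $H = h$ on $\Omega_M = D_M \cap \mathbb H$ (see, e.g., \cite[Chapter II]{LV}). By the distortion theory of quasiconformal mappings, the restriction of $H$ to $\mathbb C$ is $\eta$-quasisymmetric: there is an increasing homeomorphism $\eta:[0,+\infty) \to [0,+\infty)$ with $\lim_{s\to 0^+}\eta(s) = 0$ such that $|H(z')-H(z_0)| \le \eta\!\left(|z'-z_0|/|z''-z_0|\right)|H(z'')-H(z_0)|$ whenever $|z'-z_0| \le |z''-z_0|$. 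Then I would fix $R > M$ and $\tau_5 \le \min\{\tau_0,\, r\}$ so small that $R - \tau_5 > M$ --- so that $z, z+t \in \Omega_M$ for all $z \in \Omega_R$ and $0 \le t \le \tau_5$ --- and that $32\,\eta(\tau_5/r) < 1$.

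For $z_1, z_2 \in \Omega_R$ with $|z_1-z_2| \ge r$ and $0 \le t \le \tau_5$, I would reproduce the estimate of Claim \ref{S4}: using \eqref{denom1} together with the Koebe-type inequality \eqref{distortion}, which applies because $\overline D(z_j+t,\,t) \subset \mathbb H$ (its points have real part at least ${\rm Re}\,z_j>0$), one gets
\begin{equation*}
|h_t(z_1)-h_t(z_2)| \ge |h(z_1+t)-h(z_2+t)| - 16\,|h(z_1+t)-h(z_1)| - 16\,|h(z_2+t)-h(z_2)|.
\end{equation*}
Since $z_1,z_2,z_1+t,z_2+t$ all lie in $\Omega_M$, every $h$ above may be replaced by $H$. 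Applying the quasisymmetry of $H$ with base point $z_j+t$ and noting $|z_j-(z_j+t)| = t \le r \le |z_1-z_2| = |(z_{3-j}+t)-(z_j+t)|$, we obtain for $j=1,2$
\begin{equation*}
|H(z_j+t)-H(z_j)| \le \eta\!\left(\frac{t}{|z_1-z_2|}\right)|H(z_1+t)-H(z_2+t)| \le \eta\!\left(\frac{\tau_5}{r}\right)|H(z_1+t)-H(z_2+t)|,
\end{equation*}
whence
\begin{equation*}
|h_t(z_1)-h_t(z_2)| \ge \bigl(1-32\,\eta(\tau_5/r)\bigr)\,|H(z_1+t)-H(z_2+t)| > 0,
\end{equation*}
using that $H$ is injective and $z_1+t \ne z_2+t$. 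Therefore $h_t(z_1) \ne h_t(z_2)$, which is the assertion; combining it with Claims \ref{S2}, \ref{S3}, and \ref{S6} then yields the injectivity of $h_t$ on $\mathbb H$ exactly as at the end of the proof of Theorem \ref{univalentA}.

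The main obstacle is really conceptual rather than computational: one has to realise that Claim \ref{S4} cannot simply be copied, because quasiconformality of $h$ near $\infty$ does not translate into Euclidean uniform continuity on the unbounded region $\Omega_M$, and that the right replacement is the scale-invariant quasisymmetric distortion bound. The only other point requiring care is the routine verification that a quasiconformal map of the disk-neighbourhood $D_M$ of $\infty$ can, after a harmless shrinking, be realised as the restriction of a quasiconformal self-map of $\overline{\mathbb C}$ fixing $\infty$.
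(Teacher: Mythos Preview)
Your proof is correct and follows essentially the same route as the paper: extend $h|_{D_M}$ to a global quasiconformal self-map of $\mathbb C$ fixing $\infty$, invoke quasisymmetry, and combine it with the Koebe-type bound \eqref{distortion} and \eqref{denom1}. The only cosmetic difference is that the paper applies quasisymmetry to $F^{-1}$ (writing the resulting lower bound via $\lambda=\eta^{-1}$) to control $|h(z_1+t)-h(z_2+t)|$ from below by $|h(z_1)-h(z_1+t)|$, and then inserts \eqref{distortion} at the end, whereas you apply quasisymmetry to $H$ itself to bound each $|h(z_j+t)-h(z_j)|$ above by $\eta(t/|z_1-z_2|)\,|h(z_1+t)-h(z_2+t)|$; the two viewpoints are equivalent and lead to matching smallness conditions ($\lambda(r/t)>32$ versus $32\,\eta(\tau_5/r)<1$).
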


The key concept we use to prove Claim \ref{S5} is that of quasisymmetry which was introduced by Beurling and Ahlfors 
\cite{BA} on the real line and formulated for general metric spaces by Tukia and V\"ais\"al\"a \cite{TV}. 
For our purpose we only consider it for the complex plane. 
We refer to \cite[Definition 3.2.1]{AIM}.

\begin{definition}
Let $D \subset \mathbb C$ be an open subset and $f:D \to \mathbb C$ an orientation-preserving mapping. Let $\eta:[0,\, +\infty) \to [0,\, +\infty)$ be an increasing homeomorphism with $\lim_{t \to +\infty}\eta(t)=+\infty$. We say that $f$ is $\eta$-quasisymmetric if for each triple $z_0$, $z_1$, $z_2 \in D$ we have
$$
\frac{|f(z_0) - f(z_1)|}{|f(z_0) - f(z_2)|} \leq \eta\left( \frac{|z_0 - z_1|}{|z_0 - z_2|} \right).
$$
\end{definition}

If $f: \mathbb C \to \mathbb C$ is a $k$-quasiconformal homeomorphism of $\mathbb C$, then $f$ is $\eta$-quasisymmetric where $\eta$ depends only on $k$ (see \cite[Theorem 3.5.3]{AIM}); conversely, if $f: D \to \mathbb C$ is an $\eta$-quasisymmetric mapping on a domain $D$ then $f$ is quasiconformal (see \cite[Theorem 3.4.1]{AIM}).

\begin{proof}[Proof of Claim \ref{S5}]
Suppose that $h$ has a quasiconformal extension to $D_{M}$, which we still denote by $h$. Let $R=M+1$.
By an extension theorem (see \cite[Theorem II.1.8]{LV}),    
there exists a quasiconformal homeomorphism $F$ of the whole plane $\mathbb C$ that coincides with $h$ in $D_{R}$. In particular, it coincides with $h$ in $\Omega_{R}$. 
The inverse $F^{-1}$ of $F$ is also a quasiconformal homeomorphism of $\mathbb C$, 
and then it is a quasisymmetric homeomorphism of $\mathbb C$. Thus, 
there exists an increasing homeomorphism $\eta: [0, +\infty) \to [0, +\infty)$ such that
$$
\frac{|F^{-1}(w_0) - F^{-1}(w_1)|}{|F^{-1}(w_0) - F^{-1}(w_2)|}  \leq \eta\left( \frac{|w_0 - w_1|}{|w_0 - w_2|} \right)
$$
for any $w_0, w_1, w_2 \in \mathbb C $. 
We take the inverse $\lambda := \eta^{-1}$. 

For any $z_1, z_2 \in \Omega_R$ with $|z_1 - z_2| \geq r$ and any $t>0$, we have
\begin{equation}\label{qs}
\begin{split}
\frac{|h(z_1+t) - h(z_2 + t)|}{|h(z_1) - h(z_1 + t)|} & = \frac{|F(z_1+t) - F(z_2 + t)|}{|F(z_1) - F(z_1 + t)|}\\
& \geq \lambda\left( \frac{|(z_1 + t) - (z_2 + t)|}{|z_1 - (z_1 + t)|} \right) 
= \lambda\left( \frac{|z_1 - z_2|}{t} \right) \geq \lambda\left( \frac{r}{t} \right). 
\end{split}
\end{equation}
We may assume that $|h'(z_1 + t)| \geq |h'(z_2 + t)|$ by exchanging the roles of $z_1$ and $z_2$.
Then, by using \eqref{denom1}, \eqref{distortion}, and \eqref{qs}, we obtain that  
\begin{equation*}
\begin{split}
|h_t(z_1) - h_t(z_2)| & = \left|h(z_1 + t) - h(z_2 + t) -\frac{2t h'(z_1 + t)}{1+tPh(z_1+t)} + \frac{2t h'(z_2 + t)}{1 + t Ph(z_2 + t)}\right|\\
& \geq |h(z_1 + t) - h(z_2 + t)| -4t |h'(z_1 + t)| -4t |h'(z_2 + t)|\\
&\geq |h(z_1) - h(z_1 + t)|\lambda\left( \frac{r}{t} \right) - 8t |h'(z_1 + t)|\\
&\geq \frac{t}{4} |h'(z_1 + t)|\lambda\left( \frac{r}{t} \right) - 8t |h'(z_1 + t)|
= \frac{t}{4} |h'(z_1 + t)|\left(\lambda\left( \frac{r}{t} \right) - 32\right)
\end{split}
\end{equation*}
for all $0 < t \leq \tau_0$.
It can be seen from the monotonicity of $\lambda$ that there exists a positive constant $\tau_4 \; (\leq \tau_0)$ such that 
$\lambda (r/t) > 32$ for all $0 < t \leq \tau_4$. This shows that
$h_t(z_1) \neq h_t(z_2)$ for all $0 \leq t \leq \tau_4$.
\end{proof}

We have proved Theorems \ref{univalentA} and \ref{univalentB}. Then, Theorem \ref{main1} follows from these theorems combined with
Theorem \ref{chord} and Lemma \ref{newlemma}.

\proof[Proof of Theorem \ref{main1}.]
By Theorems \ref{univalentA} and \ref{univalentB},
we see that the analytic function $h_t$ defined by \eqref{family} is univalent in $\mathbb H$ for each $0 \leq t \leq \tau$. Then by Lemma \ref{newlemma},
we obtain that $(h_t)$ is a chordal Loewner chain over the interval $[0, \tau)$ with the associated Herglotz function $p$ satisfying \eqref{kdisk} (see \cite[Theorem 4.1]{CDMG10}). From Theorem \ref{chord}, the assertion of Theorem \ref{main1} follows.
Indeed, the extension $\hat h$ is defined by
\begin{equation*}\label{boundary}
\hat h (z) = h_{-{\rm Re}z}(i{\rm Im}z), \qquad -\tau < {\rm Re}\, z < 0,
\end{equation*}
for the chordal Loewner chain $(h_t)$ over the interval $[0, \tau)$ given by \eqref{family}.
This yields the explicit formula of this extension in \eqref{extension}. From this, its complex dilatation can be computed directly
as in \eqref{Smu3}.
\qed
\medskip

\section{Quasiconformal extensions by means of pre-Schwarzian derivative }\label{preversion}

We have the following parallel result to Theorems \ref{main1} by replacing the Schwarzian derivative $Sf$ of $f$ with the pre-Schwarzian derivative $Pf$ of $f$.

\begin{theorem}\label{Pmain12}
Let $f$ be univalent and analytic in $\mathbb H$ with $\lim_{z \to \infty}f(z) = \infty$ such that  
$f(\mathbb H)$ is a Jordan domain. Let $f$ satisfy the condition
\begin{equation*}
\lim_{{\rm Re} z\to 0^+} (2{\rm Re} z) \left| Pf(z) \right| = 0.
\end{equation*}
Suppose further that either $\mathbf{(A)}$ or $\mathbf{(B)}$ holds. 
Then, the function 
$$
\hat f(z) = 
\begin{cases}
f(z), & \text {if}\;\; {\rm Re}\, z \geq 0,\\
f(z^*) + (2{\rm Re} z) f'(z^*), \quad (z^* := -\bar z) & \text{if}\;\; -\tau < {\rm Re}\, z < 0
\end{cases}
$$
defines a quasiconformal extension of $f$ over $i\mathbb R$ to 
$\mathbb{H}^{*}_{(0,\tau)} := \{z \in \mathbb C: -\tau < {\rm Re}\, z < 0 \}$ for some $\tau > 0$  
such that its complex dilatation $\mu$ on $\mathbb{H}^{*}_{(0,\tau)}$ has the form
\begin{equation*}
\mu(z) = -(2 {\rm Re} z)  Pf(z^*).
\end{equation*}
\end{theorem}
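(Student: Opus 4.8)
The plan is to run the proof of Theorem~\ref{main1} line by line, replacing the canonical family \eqref{family} by its first--order counterpart
$$
f_t(z) = f(z+t) - 2t\, f'(z+t),
$$
which is manifestly analytic in $\mathbb H$ for every $t\ge 0$ (there is no denominator to worry about) and satisfies $f_0=f$. First I would prove the analogue of Lemma~\ref{newlemma}. A direct computation gives $\dot f_t(z) = -f'(z+t)(1+2t\,Pf(z+t))$ and $f_t'(z) = f'(z+t)(1-2t\,Pf(z+t))$, hence $(f_t)$ solves the chordal Loewner--Kufarev PDE \eqref{PDE5} with Herglotz function
$$
p(z,t)=\frac{1+2t\,Pf(z+t)}{1-2t\,Pf(z+t)},\qquad \frac{p(z,t)-1}{p(z,t)+1}=2t\,Pf(z+t).
$$
From the hypothesis $\lim_{{\rm Re}z\to0^+}(2{\rm Re}z)|Pf(z)|=0$ one obtains, for any prescribed $0<k<1$, a bound $2t|Pf(iy+t)|\le k$ on $i\mathbb R$ for all small $t$; the generalized maximum principle, exactly as in Lemma~\ref{newlemma}, propagates this to all of $\mathbb H$, so $p(\cdot,t)\in U(k)$ for $0\le t\le\tau_0$ (in particular $1-2t\,Pf(z+t)\ne0$ and $f_t'(z)\ne0$), i.e. \eqref{kdisk} holds. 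Note that, unlike in the Schwarzian case, the hypothesis is already in pre-Schwarzian form, so Theorem~\ref{PS} is not invoked here.

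Next I would prove the analogues of Theorems~\ref{univalentA} and \ref{univalentB}: univalence of $f_t$ in $\mathbb H$ for all sufficiently small $t$. Claims~\ref{S1}--\ref{S6} transplant essentially verbatim with $Sh$ replaced by $Pf$. Using Theorem~\ref{chord}\,(iii), the evolution family $(\varphi_{s,t})$ associated with $p$ is $k$-quasiconformal and $f_t\circ\varphi_{s,t}=f_s$; since $f_t'(z)=f'(z+t)(1-2t\,Pf(z+t))$ is locally bounded, the equicontinuity argument for $k$-quasiconformal maps gives $f_t(z)\to f(z_0)$ as $(z,t)\to(z_0,0)$ (Claim~\ref{S1}). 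For Claim~\ref{S2} one computes
$$
Pf_t(z)=Pf(z+t)-\frac{2t\,(Pf(z+t))'}{1-2t\,Pf(z+t)},
$$
so that only $Pf$ and $(Pf)'$ enter; the hypothesis together with Proposition~\ref{alpha1} (applied to $\psi=Pf$, $\alpha=1$) forces $(2{\rm Re}z)|Pf_t(z)|$ to be small near $i\mathbb R$, and univalence of $f_t$ on small squares then follows from the Schwarzian univalence criterion via \eqref{sup}. Claims~\ref{S3}--\ref{S6} carry over unchanged, using $\varphi_{0,t}\to{\rm id}_{\mathbb H}$, the Koebe bound $|f(z_0)-f(z_0+t)|\ge\tfrac t4|f'(z_0+t)|$, and the elementary estimate $|f_t(z_1)-f_t(z_2)|\ge|f(z_1+t)-f(z_2+t)|-2t|f'(z_1+t)|-2t|f'(z_2+t)|$ (with slightly better constants than in the Schwarzian case), handled under $\mathbf{(A)}$ by uniform continuity of $f$ and $f^{-1}$ near $\infty$ and under $\mathbf{(B)}$ by the quasisymmetry argument of Claim~\ref{S5}. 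This yields $\tau>0$ with $f_t$ univalent for $0\le t\le\tau$.

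Finally, $(f_t)_{0\le t<\tau}$ is then a chordal Loewner chain with Herglotz function in $U(k)$ (see \cite[Theorem 4.1]{CDMG10}), so Theorem~\ref{chord}\,(i),(ii) produces the continuous extension of each $f_t$ to $i\mathbb R$ and the $k$-quasiconformal extension of $f$ to $\mathbb H^*_{(0,\tau)}$ given by $\hat f(z)=f_{-{\rm Re}z}(i\,{\rm Im}z)$; unwinding the definition of $f_t$, with $z^*=-\bar z$, this is exactly $\hat f(z)=f(z^*)+(2{\rm Re}z)f'(z^*)$. For the complex dilatation I would differentiate $\hat f(x+iy)=f_{-x}(iy)$: from $\dot f_s=-p(\cdot,s)f_s'$ one gets $\partial_x\hat f=p(iy,-x)f_{-x}'(iy)$ and $\partial_y\hat f=i\,f_{-x}'(iy)$, hence $\hat f_{\bar z}/\hat f_z=\frac{p-1}{p+1}$ evaluated at $(iy,-x)$, which equals $-2x\,Pf(iy-x)=-(2{\rm Re}z)Pf(z^*)$, as required.

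The bulk of the work — univalence of $f_t$ — is inherited intact from Theorems~\ref{univalentA} and \ref{univalentB}, so there is no genuinely new obstacle; the only fresh computations are the identification of the Herglotz function (and the verification $p(\cdot,t)\in U(k)$) and the dilatation formula, both short. The one point deserving attention when transplanting Claim~\ref{S2} is the hypothesis of Proposition~\ref{alpha1}, namely that $Pf(x+iy)\to0$ uniformly as $x\to+\infty$; this holds as in the proof of Theorem~\ref{main1}, since the standard distortion estimate for univalent functions on $\mathbb H$ yields $(2{\rm Re}z)|Pf(z)|\le C$ throughout $\mathbb H$ and hence $|Pf(x+iy)|\le C/(2x)$.
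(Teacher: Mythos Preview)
Your proposal is correct and follows essentially the same approach as the paper. The paper's own proof in Section~\ref{preversion} is a brief sketch that records exactly the same modifications you make: the simpler family $f_t(z)=f(z+t)-2t\,f'(z+t)$, the associated Herglotz function $p(z,t)=(1+2t\,Pf(z+t))/(1-2t\,Pf(z+t))$ with $|(p-1)/(p+1)|=2t|Pf(z+t)|\le k$, and the remark that the univalence arguments of Theorems~\ref{univalentA} and \ref{univalentB} transplant with slight simplification; your write-up fills in precisely those details (the computation of $Pf_t$, the use of Proposition~\ref{alpha1} for $\psi=Pf$, and the improved constants in the distance estimates) and adds the explicit dilatation calculation, all of which are consistent with the paper's intended argument.
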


The proof of Theorem \ref{Pmain12} is completely similar to that of Theorem \ref{main1}. 
We only make mention of the difference. Instead of considering the family $(h_t)$ defined by \eqref{family},
we consider a simpler one 
$$
f_t(z) := f(z+t) - 2t f'(z+t), \qquad z \in \mathbb H, \quad 0 \leq t \leq \tau_0 
$$
for some $\tau_0>0$. It is associated with the Herglotz function 
$$
p(z, t) = \frac{1+2t Pf(z+t)}{1 - 2t Pf(z+t)}
$$
in comparison with \eqref{Sherg}.
This satisfies
\begin{equation*}\label{pk}
\left| \frac{p(z, t) - 1}{p(z, t) + 1} \right| = 2t \left|Pf(z+t)\right| \leq k < 1
\end{equation*}
for all $z \in \mathbb H$ and all $0 \leq t \leq \tau_0$ in comparison with \eqref{Spk}. Then, the corresponding statement to Lemma \ref{newlemma}
is obtained. To prove that there is some positive constant $\tau \;(\leq \tau_0)$ such that $f_t$ is univalent 
for any $0 \leq t \leq \tau$, we repeat the arguments for Theorems \ref{univalentA} and \ref{univalentB}.
Because of dealing with the simpler family $(f_t)$, the argument becomes 
a bit simpler. We omit the details here.

\section{Application to VMO-Teichm\"uller space (proof of Theorem \ref{main3})}\label{app}

Before proceeding to the proof of Theorem \ref{main3}, let us recall some basic definitions on Carleson measures and BMO functions (see \cite[Chapter 6]{Ga}). 

We say that a positive measure $\lambda$ on $\mathbb H$ is a Carleson measure if 
$$
\Vert \lambda \Vert_{c} := \sup_{I \subset i\mathbb R} \frac{\lambda ((0,\, |I|) \times I)}{|I|} < \infty,
$$
where the supremum is taken over all bounded intervals $I$ in $i\mathbb R$. A Carleson measure $\lambda$ is called a vanishing Carleson measure if 
$$
\lim_{|I| \to 0} \frac{\lambda ((0,\, |I|) \times I)}{|I|} = 0. 
$$
We denote by ${\rm CM}(\mathbb H)$ and ${\rm CM}_0(\mathbb H)$ the set of all Carleson measures and vanishing Carleson measures on $\mathbb H$, respectively. ${\rm CM}(\mathbb H^*)$ and ${\rm CM}_0(\mathbb H^*)$ can be defined similarly.

A locally integrable complex-valued function $u$ on $i\mathbb R$ is of  BMO (denoted by $u \in {\rm BMO}(i\mathbb R)$) if 
$$
\Vert u \Vert_{*} := \sup_{I \subset i\mathbb R} \frac{1}{|I|} \int_I |u(z) - u_I| |dz| < \infty, 
$$
where the supremum is taken over all bounded intervals $I$ on $i\mathbb R$ and $u_I$ denotes the integral mean of $u$ over $I$.  Moreover, $u$ is of VMO (denoted by $u \in {\rm VMO}(i\mathbb R)$) if in addition
$$
\lim_{|I| \to 0}\frac{1}{|I|} \int_I |u(z) - u_I| |dz| = 0. 
$$
Let $\rm BMOA(\mathbb H)$ denote the space of all analytic functions $\phi$ on $\mathbb H$ that are Poisson integrals of BMO functions on $i\mathbb R$, and let $\rm VMOA(\mathbb H)$ denote the subspace of $\rm BMOA(\mathbb H)$ whose elements have boundary values in ${\rm VMO}(i\mathbb R)$. By using the similar arguments to the case of the unit disk (see \cite[p.233]{Ga}), we can conclude that an analytic function $\phi$ on $\mathbb H$ belongs to $\rm BMOA(\mathbb H)$ if and only if $\phi$ induces a Carleson measure 
\begin{equation*}
(2{\rm Re}z) |\phi'(z)|^2 dxdy \in {\rm CM}(\mathbb H),
\end{equation*} 
and moreover, 
$\phi$ belongs to $\rm VMOA(\mathbb H)$ if and only if 
\begin{equation*}
(2{\rm Re}z) |\phi'(z)|^2 dxdy \in {\rm CM}_0(\mathbb H).
\end{equation*}

We need the following result from \cite[Proposition 7.4]{Sh}. 
This is conceptually similar to Proposition \ref{alpha1}.

\begin{proposition}\label{alpha2}
Let $\psi$ be an analytic function on $\mathbb H$ such that $\lim_{x \to +\infty} \psi(x+iy) = 0$ uniformly for $y \in \mathbb R$. For $\alpha > 0$ set 
$$\lambda_1 := |\psi(z)|^2 x^{\alpha} dxdy$$
and 
$$\lambda_2 := |\psi'(z)|^2 x^{\alpha+2} dxdy$$
for $z = x + iy \in \mathbb H$.
Then, $\lambda_1 \in {\rm CM}(\mathbb H)$ if and only if $\lambda_2 \in {\rm CM}(\mathbb H)$, and $\Vert \lambda_1 \Vert_{c} \asymp \Vert \lambda_2 \Vert_{c}$ with comparison constants depending only on $\alpha$. Moreover, $\lambda_1 \in {\rm CM}_0(\mathbb H)$ if and only if $\lambda_2 \in {\rm CM}_0(\mathbb H)$. 
\end{proposition}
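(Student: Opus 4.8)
The quantitative core consists of the two box estimates $\lambda_2(Q_I)\lesssim\lambda_1(Q_{I'})$ (with $I'\supseteq I$ an interval of length at most $2|I|$) and $\lambda_1(Q_I)\lesssim\Vert\lambda_2\Vert_c|I|$, where $Q_J:=(0,|J|)\times J$ and the implied constants depend only on $\alpha$. Granting these, the equivalence $\Vert\lambda_1\Vert_c\asymp\Vert\lambda_2\Vert_c$ is immediate, and the ${\rm CM}_0(\mathbb H)$ statement follows by running the very same estimates with $|I|$ small and using that the constants are uniform in $I$. The first estimate is local and ignores the behaviour of $\psi$ at $\infty$; the second genuinely requires $\lim_{x\to+\infty}\psi(x+iy)=0$ --- for $\psi\equiv1$ one has $\lambda_2\equiv0$ while $\lambda_1\notin{\rm CM}(\mathbb H)$.

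To get $\lambda_2(Q_I)\lesssim\lambda_1(Q_{I'})$: for $z$ with ${\rm Re}\,z=x$, the Cauchy formula on $D(z,x/2)\subset\mathbb H$ (with the $\overline{\zeta-z}$-kernel) gives $|\psi'(z)|^2\lesssim x^{-4}\int_{D(z,x/2)}|\psi|^2\,dA$; since ${\rm Re}\,\zeta\asymp x$ on that disk this becomes $|\psi'(z)|^2x^{\alpha+2}\lesssim x^{-2}\int_{D(z,x/2)}|\psi(\zeta)|^2({\rm Re}\,\zeta)^\alpha\,dA$. Integrating over $Q_I$ and using Fubini, the fibre $\{z\in Q_I:\zeta\in D(z,x/2)\}$ has area $O(({\rm Re}\,\zeta)^2)$ and forces ${\rm Re}\,z\asymp{\rm Re}\,\zeta$, so $\lambda_2(Q_I)\lesssim\lambda_1(Q_{I'})$. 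Dividing by $|I|$ and taking the supremum gives the ${\rm CM}(\mathbb H)$ implication; letting $|I|\to0$ gives the ${\rm CM}_0(\mathbb H)$ one. This direction uses nothing about $\psi$ near $\infty$.

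To get $\lambda_1(Q_I)\lesssim\Vert\lambda_2\Vert_c|I|$: since $\partial_x\psi=\psi'$ and $\psi(X+iy)\to0$, we have $\psi(x+iy)=-\lim_{X\to\infty}\int_x^X\psi'(t+iy)\,dt$, hence $|\psi(x+iy)|\le\int_x^\infty|\psi'(t+iy)|\,dt$. Fix $\delta\in(0,\alpha)$ and apply Cauchy--Schwarz against the weight $t^{1+\delta}$:
\[
|\psi(x+iy)|^2\le\frac{x^{-\delta}}{\delta}\int_x^\infty|\psi'(t+iy)|^2t^{1+\delta}\,dt.
\]
Multiply by $x^\alpha$, integrate over $(x,y)\in Q_I$, and interchange the $x$- and $t$-integrals. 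The contribution of $t\le|I|$ is a constant depending only on $\alpha$ times $\int_{Q_I}|\psi'(z)|^2({\rm Re}\,z)^{\alpha+2}\,dxdy=\lambda_2(Q_I)\le\Vert\lambda_2\Vert_c|I|$. The contribution of $t>|I|$ is bounded by a constant times $|I|^{\alpha+1-\delta}\int_I\int_{|I|}^\infty|\psi'(t+iy)|^2t^{1+\delta}\,dt\,dy$; decomposing $\{t>|I|\}$ into dyadic strips $[2^j|I|,2^{j+1}|I|]\times I$, each contained in a Carleson box over an interval of length $2^{j+1}|I|$, one estimates this double integral by $\Vert\lambda_2\Vert_c|I|^{\delta-\alpha}\sum_{j\ge0}2^{-j(\alpha+1-\delta)}$, a convergent series, so that this contribution is again $\lesssim\Vert\lambda_2\Vert_c|I|$. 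Adding the two pieces gives $\lambda_1(Q_I)\lesssim\Vert\lambda_2\Vert_c|I|$; for ${\rm CM}_0(\mathbb H)$ one repeats the argument with the normalized box-masses of $\lambda_2$ in place of $\Vert\lambda_2\Vert_c$ and runs a dominated-convergence argument along the dyadic tower (the tail of the $j$-series is uniformly small, while each fixed term tends to $0$ as $|I|\to0$).

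The step I expect to be the real work is this tail estimate: one must arrange the dyadic decomposition so that every strip is dominated by $\Vert\lambda_2\Vert_c$ times its length and the resulting geometric series converges, and this is where the hypothesis $\alpha>0$ is consumed (it guarantees a legitimate exponent $\delta\in(0,\alpha)$). The vanishing of $\psi$ at $\infty$ enters at the very first step, the representation of $\psi$ as a tail integral of $\psi'$ --- without it the argument, and indeed the statement, collapses, as the constant function shows. One also has to check that $\int_x^\infty|\psi'(t+iy)|^2t^{1+\delta}\,dt$ is finite for the $y$ that matter, which follows a posteriori from the Carleson bound on $\lambda_2$ via the same dyadic estimate. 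All of this is what is carried out in \cite[Proposition 7.4]{Sh}, which the present paper invokes directly.
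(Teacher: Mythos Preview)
The paper does not give its own proof of this proposition: it is quoted verbatim as \cite[Proposition~7.4]{Sh} and invoked as a black box, exactly as you note in your final sentence. So there is no argument in the paper to compare against.

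Your sketch is a correct proof of the statement. The $\lambda_1\Rightarrow\lambda_2$ direction via the Bergman reproducing formula for $\psi'$ on $D(z,x/2)$ and Fubini is standard and works as written; the key point that the fibre integral $\int_{\{z:\zeta\in D(z,x/2)\}}x^{-2}\,dA(z)$ is $O(1)$ is exactly the content of ``area $O((\mathrm{Re}\,\zeta)^2)$ and $\mathrm{Re}\,z\asymp\mathrm{Re}\,\zeta$''. In the $\lambda_2\Rightarrow\lambda_1$ direction, the tail integral representation $\psi(x+iy)=-\int_x^\infty\psi'(t+iy)\,dt$ is where the uniform vanishing at $\infty$ is used, and your Cauchy--Schwarz with weight $t^{1+\delta}$, $0<\delta<\alpha$, followed by the dyadic decomposition of $\{t>|I|\}$, gives the convergent geometric series with ratio $2^{\delta-\alpha}<1$; this is precisely where $\alpha>0$ enters. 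The $\mathrm{CM}_0$ refinement you describe (truncate the $j$-series at a large $j_0$ using the global norm, then send $|I|\to0$ so that the finitely many remaining boxes all have length below the $\mathrm{CM}_0$ threshold) is the right two-step $\varepsilon$-argument. Your remark that the finiteness of $\int_x^\infty|\psi'(t+iy)|^2t^{1+\delta}\,dt$ follows a posteriori from the Carleson bound is also correct and closes the only remaining gap.
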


\begin{proof}[Proof of Theorem \ref{main3}]  
Since $h$ admits a quasiconformal extension to $\overline{\mathbb C}$ with $h(\infty) = \infty$, we have the well-known inequalities (see \cite{Be80-1} for more information): 
\begin{equation}\label{norm6}
(2{\rm Re}z) |Ph(z)|  < 6; \quad (2{\rm Re}z)^2 |Sh(z)|  < 6
\end{equation}
for all $z \in \mathbb H$. 
These in particular yield that 
$Ph(z) \to 0$ and $Sh(z) \to 0$ uniformly for ${\rm Im}\,z \in \mathbb R$ as ${\rm Re}\,z \to +\infty$. 
Since $\log h' \in {\rm VMOA}(\mathbb H)$, we obtain by \cite[Lemma 7.1]{Sh} that 
\begin{equation*}
\lim_{{\rm Re} z\to 0^+} (2{\rm Re} z) \left| Ph(z) \right| = 0, 
\end{equation*}
and then by Theorem \ref{PS} that
\begin{equation*}
\lim_{{\rm Re} z\to 0^+} (2{\rm Re} z)^2 \left| Sh(z) \right| = 0. 
\end{equation*}
Thus, $h$ satisfies the condition of Corollary \ref{main2}. In addition,
the Loewner chain $(h_t)$ over some interval $[0, \tau)$ 
produced by $h$ in the form of \eqref{family}  satisfies the condition of Theorem \ref{chord}.

It follows from the equality $Sh = (Ph)' - (Ph)^2/2$ and \eqref{norm6} that
\begin{equation}\label{SC}
\begin{split}
(2{\rm Re}z)^3 |Sh(z)|^2 &\leq 2 (2{\rm Re}z)^3 |(Ph(z))'|^2 + 1/2 \left((2{\rm Re}z) |Ph(z)|\right)^2 (2{\rm Re}z) |Ph(z)|^2\\
& \leq 2 (2{\rm Re}z)^3 |(Ph(z))'|^2 + 18 (2{\rm Re}z) |Ph(z)|^2.
\end{split}
\end{equation}
Since $\log h' \in {\rm VMOA}(\mathbb H)$, or equivalently, 
\begin{equation}\label{PhCM}
(2{\rm Re}z) |Ph(z)|^2 dxdy \in {\rm CM}_0(\mathbb H),
\end{equation} 
we conclude by Proposition \ref{alpha2} and \eqref{SC} that 
\begin{equation}\label{ShCM}
(2{\rm Re}z)^3 |Sh(z)|^2 dxdy \in {\rm CM}_0(\mathbb H).
\end{equation}

We choose any $0 < t < \tau$ and fix it. 
It follows from \eqref{ht} that 
\begin{equation*}
Ph_t(z) = \left( \log h_t'(z) \right)' = Ph(z+t) + \frac{2t^2 (Sh(z+t))'}{1+2t^2Sh(z+t)} - \frac{2t(Ph(z+t))'}{1+tPh(z+t)}.
\end{equation*}
By means of \eqref{denom1} and \eqref{denom2}, $|1+2t^2Sh(z+t)|$ and $|1+tPh(z+t)|$ are bounded from below by 
a positive constant. Then,
this yields that 
\begin{equation*}
\begin{split}
 (2{\rm Re}z) |Ph_t(z)|^2  
\leq&  C ( (2{\rm Re}(z+t)) |Ph(z+t)|^2 \\
& \quad + (2{\rm Re}(z+t))^5 |(Sh(z+t))'|^2 + (2{\rm Re}(z+t))^3 |(Ph(z+t))'|^2\,)
\end{split}
\end{equation*}
for some positive constant $C$.
Making use of Proposition \ref{alpha2}, we also conclude by \eqref{PhCM} and \eqref{ShCM} that
\begin{equation}\label{htCM}
(2{\rm Re}z) |Ph_t(z)|^2 dxdy \in {\rm CM}(\mathbb H).
\end{equation}
This is equivalent to saying that $\log (h_t)'$ belongs to $\rm BMOA(\mathbb H)$.

Since $h$ admits quasiconformal extensions to $\mathbb C$ with $\lim_{z \to \infty}h(z)=\infty$
and so does $\varphi_{0,t}$ by Theorem \ref{chord} (iii),
we see that
$h_t = h\circ\varphi_{0,t}^{-1}$ can be quasiconformally extended to $\overline{\mathbb C}$ with $h_t(\infty)=\infty$. 
In fact, in virtue of property \eqref{htCM}, this further implies that $h_t$ admits such 
a particular quasiconformal extension $\widehat{h_t}$
that its
complex dilatation $\mu_t$ on $\mathbb H^*$ induces a Carleson measure 
\begin{equation}\label{mutCM}
|\mu_t(z)|^2/(-2{\rm Re}z)dxdy \in {\rm CM}(\mathbb H^*)
\end{equation}
(see \cite[Theorem 7.2]{Sh}). For example, it is known that
the variant of the Beurling--Ahlfors extension by heat kernel (see \cite[Theorem 4.2]{FKP}, \cite[Theorem 3.4]{WM-2})
realizes such an extension.

Mediated by the relation $\hat h(-t+iy) = h_t(iy)$, a map $\tilde h$ on $\mathbb H^*$ is composed by 
$$
\tilde h(z) := 
\begin{cases}
h(z), & \text {if}\;\; {\rm Re} z \geq 0,\\
  h_{-{\rm Re}z}(i{\rm Im}z) = h(z^*) + \frac{(2{\rm Re} z) h'(z^*)}{1 - ({\rm Re} z) Ph(z^*)},      
  & \text{if}\;\;  -t \leq {\rm Re} z < 0,\\
 \widehat{h_t}(z + t), & \text {if}\;\; {\rm Re} z < -t.
\end{cases}
$$
This is well-defined, and yields a quasiconformal extension of $h$ to $\mathbb C$ whose complex dilatation 
$\tilde \mu$ on $\mathbb H^*$ is given by
$$
\tilde \mu(z) = 
\begin{cases}
 - \frac{1}{2} (2{\rm Re}z)^2 Sh(z^*),      & \text{if}\;\;  -t \leq {\rm Re} z < 0,\\
 \mu_t(z + t), & \text {if}\;\; {\rm Re} z < -t.
\end{cases}
$$

We will show that $\tilde \mu$ induces a vanishing Carleson measure on $\mathbb H^*$ as in \eqref{carleson}.
For an interval $I \subset i\mathbb R$ with $|I| \leq t$, we see from \eqref{ShCM} that
$$
\frac{1}{|I|}\iint_{(-|I|,\, 0)\times I} \frac{|\tilde\mu(z)|^2}{(-2{\rm Re}z)} dxdy = \frac{1}{4|I|}\iint_{(0,\, |I|)\times I} (2{\rm Re}z)^3 |Sh(z)|^2 dxdy
$$
is bounded uniformly with respect to $I$, and tends to $0$ as $|I| \to 0$. 
For an interval $I \subset i\mathbb R$ with $|I| > t$, we have
\begin{equation}\label{bigbox}
\begin{split}
\frac{1}{|I|}\iint_{(-|I|,\, 0)\times I} \frac{|\tilde\mu(z)|^2}{-2{\rm Re}z} dxdy  
= & \frac{1}{4|I|}\iint_{(-t,0)\times I} (2{\rm Re}z)^2 |Sh(z)|^2 
dxdy\\
&+ \frac{1}{|I|}\iint_{(-|I|,-t]\times I} \frac{|\mu_t (z + t)|^2}{(-2{\rm Re}z)} dxdy.
\end{split}
\end{equation}
By \eqref{ShCM} and \eqref{mutCM}, it is not difficult to see that this is bounded uniformly with respect to $I$. 
Indeed, for the estimate of the first term of the right side of \eqref{bigbox}, we divide $[-t,0)\times I$ into
$n$ congruent rectangles $(-t,0)\times I_i$ $(i=1,\ldots,n)$ so that $nt \leq |I| < (n+1)t$. For the estimate of
the second term, we replace $(-|I|,-t]\times I$ with a larger square $(-|I|-t,-t]\times I$ and
translate it by $t$ along $x$-axis.
Consequently, $ |\tilde \mu (z)|^2/(-2{\rm Re}z) dxdy$ is a vanishing Carleson measure on $\mathbb H^*$. 
\end{proof}

\begin{remark}
As mentioned in the introduction, this result completes the characterization of the elements of
the VMO Teichm\"uller space on the upper half-plane developed by Shen \cite{Sh}.
In addition, we can show that the image of the space of Beltrami coefficients inducing 
vanishing Carleson measures on $\mathbb H^*$ by the Schwarzian derivative map coincides with
the intersection of the Bers embedding of the universal Teichm\"uller space $T$ and the space of the Schwarzian derivatives
inducing vanishing Carleson measures on $\mathbb H$.
This answers a question in \cite[Remark 5.2]{Sh}.
\end{remark}

\end{document}